\Crefname{ALC@unique}{Line}{Lines}
\DeclareMathOperator{\diag}{diag}
 \journalname{myjournal}
\begin{document}

\title{A FEAST SVDsolver based on Chebyshev--Jackson series for computing partial singular triplets of large matrices\thanks{Supported in part by the National Natural Science Foundation of China (No. 12171273)}}

%\subtitle{Do you have a subtitle?\\ If so, write it here}

\titlerunning{A CJ--FEAST SVDsolver}        % if too long for running head

\author{ Zhongxiao Jia\textsuperscript{1}   \and Kailiang Zhang\textsuperscript{1}}

\authorrunning{Z. JIA AND K. Zhang} % if too long for running head

\institute{
           \begin{itemize}
            \item[\Letter] {Zhongxiao Jia} \\
                jiazx@tsinghua.edu.cn \\
            \item[] Kailiang Zhang \\
                zkl18@mails.tsinghua.edu.cn \\
                \at
            \item[\textsuperscript{1}] Department of Mathematical Sciences, Tsinghua University, 100084 Beijing, China
          \end{itemize}
}
\date{Received: date / Accepted: date}
% The correct dates will be entered by the editor

\maketitle

\begin{abstract}
    The FEAST eigensolver is extended to the computation of the singular
    triplets of a large matrix $A$ with the singular values in a given interval.
    The resulting FEAST SVDsolver is subspace iteration
    applied to an approximate spectral projector of $A^TA$ corresponding to
    the desired singular values in a given interval, and constructs approximate
    left and right singular subspaces corresponding to the desired singular
    values, onto which $A$ is projected to obtain Ritz approximations. Differently from a commonly used contour
    integral-based FEAST solver, we propose a robust alternative
    that constructs approximate spectral projectors by
    using the Chebyshev--Jackson polynomial series, which are
    symmetric positive semi-definite with the eigenvalues in $[0,1]$.
    We prove the pointwise convergence of this series
    and give compact estimates for pointwise errors of
    it and the step function that corresponds to the exact spectral
    projector. We present error bounds for the approximate spectral projector
    and reliable estimates for the number of desired singular triplets,
    establish numerous convergence results on the resulting FEAST SVDsolver, and
    propose practical selection strategies for determining
    the series degree and for reliably determining the subspace dimension. The solver and
    results on it are directly applicable or adaptable to the real symmetric and complex
    Hermitian eigenvalue problem.
    Numerical experiments illustrate that our FEAST SVDsolver is at least
    competitive with and is much more efficient than the
    contour integral-based FEAST SVDsolver when the desired singular values are
    extreme and interior ones, respectively, and it
    is also more robust than the latter.
\keywords{singular value decomposition\and Chebyshev--Jackson series expansion\and spectral projector\and Jackson damping factor\and pointwise convergence\and subspace iteration\and FEAST SVDsolver\and convergence rate}
% \PACS{PACS code1 \and PACS code2 \and more}
\subclass{15A18 \and 65F15\and 65F50}
\end{abstract}

\section{Introduction}

Matrix singular value decomposition (SVD) problems play a crucial role in many
applications. For small to moderate problems,
very efficient and robust SVD algorithms and softwares have been well
developed and widely used \cite{golub2013matrix,stewart2001matrix}.
They are often called direct SVD solvers, and compute the entire singular
values and/or singular vectors using predictable iterations.
In this paper, we consider the following partial SVD problem:
Given a matrix $A\in\mathbb{R}^{m\times n}$ with $m\geq n\gg 1$ and a real
interval $[a,b]$ contained in the singular spectrum of $A$,
determine the $n_{sv}$ singular triplets $(\sigma,u,v)$ with the singular
values $\sigma\in [a,b]$ counting multiplicities,
where
\begin{equation*}
    \begin{cases}
        Av=\sigma u,   \\
        A^Tu=\sigma v, \\
        \left\|u\right\|=\left\|v\right\|=1.
    \end{cases}
\end{equation*}

Since the SVD of $A$ is mathematically equivalent to the
eigendecomposition of its cross-product matrix $A^TA$,
it is possible to adapt those algorithms for
a symmetric matrix eigenvalue problem to the corresponding
SVD problem in some numerically stable way.
Over the past two decades,
a new class of numerical methods has emerged for computing the eigenvalues
of a large matrix in a given region and/or the associated eigenvectors,
and they are based on contour integration and rational filtering.
Among them, representatives are the Sakurai--Sugiura (SS) method \cite{sakurai2003projection} and the FEAST eigensolver \cite{polizzi2009density},
which fall into the category of Rayleigh--Ritz projection methods. We should
point out that, for the computation of
eigenvalues in a given region inside the spectrum, all the other available
algorithms, e.g., subspace iteration, Arnoldi type algorithms and their
shift-invert variants, and Jacobi--Davidson type algorithms, are not
directly applicable. The only exception is that the given
region and exterior eigenvalues coincide and the number
of eigenvalues in the region is known. In this case, the implicitly restarted Arnoldi
algorithm \cite{sorensen1992}, on which the package ARPACK \cite{lehoucq1998}
and the Matlab function {\textsf eigs} are based, and the implicitly restarted
refined Arnoldi algorithm \cite{jia1999} can be used.

The SS method and subsequent variants
\cite{ikegami2010contour,ikegami2010filter,imakura2014SSarnoldi,sakurai2016,sakurai2007cirr}
have resulted in the z-Pares package \cite{Futamura2014Online} that handles
large Hermitian and non-Hermitian matrix eigenvalue problems.
The original SS method is the SS-Hankel method,
and its variants include the SS-RR method (Rayleigh--Ritz projection) and
the SS-Arnoldi method as well as their block variants.
The SS-Hankel method computes certain moments,
which are constructed by the contour integrals with an integral domain containing all the desired eigenvalues,
to form small Hankel matrices or matrix pairs of order $m$,
whose eigenvalues equal the desired $m$ distinct eigenvalues of the original matrix or matrix pair contained in the region.
In computations,
one computes those contour integrals by some numerical quadrature and obtains approximations to the moments,
or constructs an approximate spectral
projector associated with all the desired eigenvalues if the exact
spectral projector is involved.
The SS method and its variants are essentially Krylov or block Krylov subspace
based methods starting with a {\em specific initial vector or block vector
that is generated by
acting the approximate spectral projector on a vector or block vector chosen
randomly}, realize the Rayleigh--Ritz projection onto them,
and compute Ritz approximations \cite{sakurai2016}.
The SS-RR method computes an orthonormal basis of the underlying subspace
and projects the large matrix or matrix pair onto it,
and the SS-Arnoldi method exploits the Arnoldi process to generate
an orthonormal basis of the subspace and forms the projection matrix.
We refer the reader to \cite{sakurai2016} for a summary of these methods.

The FEAST eigensolver \cite{guttel2015zolotarev,kestyn2016feast,polizzi2009density,tang2014feast},
first introduced by Polizzi \cite{polizzi2009density} in 2009, has led to
the development of the FEAST numerical library \cite{polizzi2020feast}. Unlike the SS
method and its variants, this eigensolver
works on subspaces of a fixed dimension and
uses subspace iteration \cite{golub2013matrix,parlett1998symmetric,saad2011numerical,stewart2001matrix}
on an approximate spectral projector to generate a sequence of subspaces, onto which
the Rayleigh--Ritz projection of the original matrix or matrix pair is realized
and the Ritz approximations are computed.

In the SS-method and the FEAST eigensolver, since
the spectral projector associated with the eigenvalues in a given
region can be represented in the form of a contour integral, computationally
they use a suitably chosen quadrature to approximate
the integral and construct an approximate
spectral projector. %\textcolor{blue}{associated with all the eigenvalues in the region.}
This involves solutions of several linear systems with shifted coefficient
matrices, where the shifts are the quadrature nodes. For instance,
the FEAST eigensolver needs to solve several,
i.e., the subspace dimension times the number of nodes, large linear systems
at each iteration.
If the matrix is structured, such as banded, then one can use
LU factorizations \cite{golub2013matrix} to solve the linear systems
involved efficiently. But if the matrix is generally dense or sparse,
ones needs to apply some iterative methods, e.g.,
Krylov subspace iterative methods, to solve
them approximately, and the resulting algorithm is called IFEAST
\cite{gavin2018ifeast}. However, these linear systems are highly indefinite
when the region of interest is
inside the spectrum. It is well known that, for highly indefinite or
nonsymmetric linear systems, Krylov subspace iterative solvers,
e.g., the GMRES and BiCGstab methods \cite{saad2003},
are generally inefficient and can be very slow.
An adaptation of Theorem 3.1 of \cite{robbe2009} on inverse
subspace iteration to the current context states
that these shifted linear systems must be solved with {\em increasing
accuracy} in order to guarantee that the FEAST eigensolver converges
linearly \cite{gavin2018ifeast}. As a consequence,
the FEAST eigensolver may be extremely slow even if these linear systems
are solved in parallel. We should point out
that there has not yet been a general effective preconditioning technique for
highly indefinite linear systems.

As a matter of fact, the situation is more subtle. It is known
from \cite{tang2014feast} that the distance between a desired eigenvector
and the subspace may only decrease down to the relative accuracy level of the
approximate solutions of the shifted linear systems {\em rather than}
the residual norm level. This implies that, in finite precision,
the residual norm of an approximate eigenpair by the FEAST solver may not
drop below a reasonably prescribed tolerance, say $10^{-13}$, once one of the shifted
linear systems is ill conditioned, which is definitely true when some of the
nodes are close to some eigenvalues of the underlying matrix.

More precisely, it is well known that
the {\em attainable} relative error, i.e., the relative accuracy, of an
approximate solution is bounded by
the condition number times by the relative residual norm. This
error bound is in the worst case but is achievable.
Suppose that the condition number of a shifted linear system is no less
than $10^3$ and the relative error bound for the approximate solution
is attainable.
Then, in finite precision, even if the relative residual norm is already
as small as $10^{-15}\sim10^{-14}$, i.e., the level of machine precision $2.22\times 10^{-16}$,
the relative accuracy of the approximate solution may only
achieve $10^3\times 10^{-14}=10^{-11}$.
As a result,
the attainable relative residual norms of approximate eigenpairs
by the contour integral-based FEAST eigensolver may not
decrease to $10^{-13}$, meaning that it
fails to converge for a prescribed reasonable
stopping tolerance $10^{-13}$.
As is pointed out in \cite{kestyn2016feast}, such a case
occurs more possibly for the non-Hermitian matrix eigenvalue problem and
could also occur in the Hermitian case. In principle,
a possible remedy is to take nodes away from the real axis, but how to treat
it effectively is nontrivial, and there is no systematic and viable solution.
In computations, whenever this case occurs,
there may be two consequences.
First, the FEAST eigensolver itself
may not converge, as Theorem 4.4 of \cite{tang2014feast} indicates,
because the convergence conditions there may not be met. Second,
although it converges, the
distance between a desired eigenvector and the subspace may decrease
only to the level of the accuracy of the approximate solutions of
shifted linear systems, as described above.
Therefore,
on the one hand, it may be very costly to solve them; on the other
hand, approximate solutions may not achieve the desired accuracy requirement,
causing that the FEAST eigensolver may have robustness problem if
higher but reasonable accuracy is required in finite precision. We will
present an example to illustrate
this in the section of later numerical experiments.

In this paper, putting aside the representation of contour integral,
we notice that the underlying spectral projector precisely corresponds to
a specific step or piecewise continuous function $h(x)$, which will be defined
later. This makes it possible to propose other alternatives to
construct a good approximate spectral
projector without solutions of shifted linear systems at
each iteration and, meanwhile, to improve the overall efficiency
and robustness of this kind of solvers.
An obvious alternative is to approximate $h(x)$
by algebraic polynomials and then constructs an approximate spectral projector
correspondingly. For instance, we can do these by
the famous Chebyshev or Chebyshev--Jackson series expansion.
Such approximations are not new, and have been mentioned
and briefly considered in, e.g., \cite{di2016efficient}.
However, except \cite{di2016efficient},
such polynomial approximation approach received
little attention, compared with
rational approximations based on the contour integral and quadratures.
Among others, a fundamental cause is that it lacks
the pointwise convergence of the Chebyshev--Jackson series and
its pointwise error estimates as well as
accuracy estimates for the approximate spectral projector.

It is well known from, e.g., \cite{mason2002chebyshev} that the Chebyshev
series expansion is the best least squares approximation to a given function
with respect to
the Chebyshev $l_2$-norm. For the step function $h(x)$,
the researchers in \cite{di2016efficient} derive a quantitative error
estimate for the mean-square convergence of
Chebyshev series approximation. However, it is the
{\em pointwise} error of the series and its quantitative error estimates
that matter and are critically needed. Unfortunately, the mean-square
convergence does not necessarily mean the pointwise convergence,
and one cannot obtain desired error estimates
from those mean-square convergence results either.
For the step function $h(x)$, it is shown in,
e.g., \cite{di2016efficient} that Jackson coefficients
\cite{rivlin1981introduction} can considerably dampen Gibbs
oscillations, and it is thus better to exploit the Chebyshev--Jackson
series. However, the pointwise convergence of this series and
its quantitative error estimates also
lack for this series.
As a consequence, nothing has been known on the convergence of the
the resulting FEAST eigensolver, let alone
a reliable determination of the subspace dimension $p$
and a proper selection of the series degree $d$ when using the
Chebyshev--Jackson series to construct an approximate spectral projector
in order to propose and develop a convergent FEAST eigensolver.

The FEAST eigensolver can be directly adapted to the
computation of the singular triplets of $A$ associated with
the singular values $\sigma$ in a given interval $[a,b]$ in some numerically
stable way. Precisely, for such a partial SVD problem, we
will construct an approximate spectral projector of $A^TA$
associated with $\sigma\in [a,b]$ by
exploiting the Chebyshev--Jackson series expansion,
apply subspace iteration
to the approximate spectral projector constructed, and
generate a sequence of approximate {\em left} and {\em right} singular
subspaces corresponding to $\sigma\in [a,b]$. In computations, for
numerical stability,
instead of working on the eigenvalue problem of $A^TA$, we work on $A$ directly, project $A$ onto the left and right subspaces generated, and compute
the Ritz approximations to the desired singular triplets. We call
the resulting algorithm the Chebyshev--Jackson FEAST (CJ-FEAST) SVDsolver.

For the CJ-FEAST SVDsolver, we will make a detailed analysis
of the pointwise convergence
of the Chebyshev--Jackson series, and establish sharp
{\em pointwise} error estimates for the series.
Particularly, we prove that the values of the Chebyshev--Jackson series always
lie in $[0,1]$, which will make the approximate
spectral projectors unconditionally symmetric positive
semi-definite (SPSD) and their eigenvalues always lie in $[0,1]$.
We make full use of these results to estimate the accuracy of the approximate spectral projector and prove the convergence of the CJ-FEAST SVDsolver.
We establish the estimates for the distances of approximate
subspaces and the desired right singular subspace,
show how each of the Ritz approximations converges, and give
the convergence rates of Ritz values and left and right Ritz vectors.
Also, exploiting the pointwise convergence results
and randomized trace estimation results
\cite{avron2011randomized,Cortinovis2021onrandom,roosta2015improved}, we
give reliable estimates for the number $n_{sv}$ of desired singular triplets
with $\sigma\in [a,b]$.
These estimates are useful for all FEAST-type methods and SS-type methods.
With these results, we are able to propose practical
and robust selection strategies for determining the series degree and
for ensuring the subspace dimension $p\geq n_{sv}$.
Unlike the contour integral-based FEAST SVDsolver, the attainable accuracy,
i.e., the residual norms of approximate singular triplets obtained by
the CJ-FEAST SVDsolver can achieve the level of machine precision regardless
of the singular value distribution and without additional
requirements. Compared with the contour integral-based FEAST SVDsolver, another attractive property of the CJ-FEAST SVDsolver is that its computational cost does not depend on
whether or not the interval of interest corresponds to exterior or interior
singular values.

All the theoretical results and algorithms in this paper
are directly applicable or adaptable to the
real symmetric and complex Hermitian matrix eigenvalue problems, once
we replace $A^TA$ by a given matrix itself and the Rayleigh--Ritz projection
for the SVD problem by that for the eigenvalue problem.
We should particularly point out that, similarly to
a contour integral-based FEAST solver where the shifted linear systems
can be solved in parallel at each iteration, the action of an approximate
spectral projector on several vectors can be
realized in parallel too.

The paper is organized as follows.
In \Cref{sec: Known results}, we review some preliminaries, the subspace
iteration applied to an approximate spectral projector
and some results to be used in the paper.
In \Cref{sec: Chebyshev series convergence}, we establish compact
quantitative pointwise convergence results on the Chebyshev--Jackson series.
Then we propose the CJ-FEAST SVDsolver in \Cref{sec: cross product method}
to compute the $n_{sv}$ desired singular triplets of $A$. We establish
estimates for accuracy of the approximate spectral projector and
the number of desired singular values. In \Cref{conver}, we establish
the convergence of the CJ-FEAST SVDsolver,
and present a number of convergence results.
In \Cref{sec: experiments}, we report numerical experiments to illustrate the
performance of the CJ-FEAST SVDsolver. We also make a comparison of
our solver and the IFEAST eigensolver applied
to the SVD problem, and illustrate the competitiveness,
superiority and robustness of our solver.
Finally, we conclude the paper in \Cref{sec:end}.

Throughout this paper, denote by $\|\cdot\|$ the 2-norm of a vector or matrix,
by $I_n$ the identity matrix of order $n$ with $n$ dropped whenever it is
clear from the context, by $e_i$ column $i$ of $I_n$,
and by $\sigma_{\max}(X)$ and $\sigma_{\min}(X)$ the largest and
smallest singular values of a matrix $X$, respectively.
For the concerning SVD problem of a matrix
$A\in \mathbb{R}^{m\times n}$ with $m<n$,
we simply apply the algorithm to $A^T$.
%Also, all the analysis and results in this paper work on a complex
%$A$ trivially with the transpose replaced by the conjugate transpose.

\section{Preliminaries and a basic algorithm}\label{sec: Known results}
Denote by $S=A^TA$, and let
\begin{equation*}
    A=U\begin{pmatrix}
        \Sigma \\
        \text{\large 0}
    \end{pmatrix}V^T
\end{equation*}
be the SVD of $A$ with the diagonals $\sigma$'s of $\Sigma$ being the
singular values and the columns of $U$ and $V$ being the
corresponding left and right singular vectors; see \cite{golub2013matrix}. Then
\begin{equation}
    V^TSV=\Sigma^2\in \mathbb{R}^{n\times n} \label{ceigen}
\end{equation}
is the eigendecomposition of $S$. At this moment we do not label
the order of the singular values $\sigma$'s.

Given an interval $[a,b]\subset [\sigma_{\min},\|A\|]$ with $\sigma_{\min}=
\sigma_{\min}(A)$, suppose that
we are interested in all the singular values $\sigma\in [a,b]$ of $A$ and/or
the corresponding left and right singular vectors.
Define
\begin{equation}\label{ps1}
    P_{S}=V_{in}V_{in}^T+\frac{1}{2}V_{ab}V_{ab}^T,
\end{equation}
where $V_{in}$ consists of the columns of $V$ corresponding to the eigenvalues of
$S$ in the open interval $(a^2,b^2)$ and $V_{ab}$ consists of the columns
of $V$ corresponding to the eigenvalues of $S$ that equal the end
$a^2$ or $b^2$.
Notice that if neither of $a$ nor $b$ is a singular value of $A$
then $P_{S}=V_{in}V_{in}^T$ is the standard spectral
projector of $S$ associated with its eigenvalues $\sigma^2\in [a^2,b^2]$.
If either $a$ or $b$ or both them are singular values, then
$P_{S}$ is called a generalized spectral projector associated with
all the $\sigma\in [a,b]$. The factor $\frac{1}{2}$ is
necessary, and it corresponds to the step function to
be introduced later that is approximated by the Chebyshev--Jackson
series in this paper or by a rational function in the context
of the contour integral. In the sequel, we simply call $P_{S}$
the spectral projector of $S$ associated with $\sigma\in [a,b]$.

For an approximate singular triplet $(\hat{\sigma},\hat{u},\hat{v})$ of $A$,
its residual is
\begin{equation}\label{resnorm}
    r=r(\hat{\sigma}, \hat{u}, \hat{v}):=\begin{bmatrix}
            A\hat{v}-\hat{\sigma}\hat{u} \\
            A^T\hat{u}-\hat{\sigma}\hat{v}
        \end{bmatrix},
\end{equation}
and the size of $\|r\|$ will be used to decide the convergence of
$(\hat{\sigma},\hat{u},\hat{v})$.

\Cref{alg:subspace iteration} is an algorithmic framework of the
FEAST SVDsolver, where $P$ is an approximation to $P_{S}$. It
is the Rayleigh--Ritz projection with respect to
the left and right subspaces $\mathcal{U}^{(k)}$ and
$\mathcal{V}^{(k)}$ for the SVD problem, where
$\mathcal{U}^{(k)}=A\mathcal{V}^{(k)}$, and computes
the Ritz approximations
$(\hat{\sigma}_i^{(k)},\hat{u}_i^{(k)},\hat{v}_i^{(k)})$ of
the desired singular triplets. The $\hat{v}^{(k)}\in \mathcal{V}^{(k)}$ and
$\hat{u}^{(k)}\in \mathcal{U}^{(k)}$
are the right and left Ritz vectors that approximate the right and
left singular vectors of $A$, respectively.
\Cref{alg:subspace iteration} is an adaptation of the FEAST eigensolver to our
SVD problem.
Particularly, as we will show in the proof of
\Cref{thm:triplets convergence}, this algorithm yields
$A\hat{v}_{i}^{(k)}=\hat{\sigma}_i^{(k)}\hat{u}_{i}^{(k)}$ (cf.
\eqref{leftright}). This means
that,when judging the convergence, we only need to compute the lower part
$A^T\hat{u}_i^{(k)}-\hat{\sigma}_i^{(k)}\hat{v}_i^{(k)}$
of the corresponding residual \eqref{resnorm} of an approximate singular
triplet, i.e., Ritz approximation or triplet,
$(\hat{\sigma}_i^{(k)},\hat{u}_i^{(k)},\hat{v}_i^{(k)})$.

\begin{algorithm}
    \caption{The basic FEAST SVDsolver: Subspace iteration on the approximate spectral projector $P$ for the partial SVD of $A$.}
    \label{alg:subspace iteration}
    \begin{algorithmic}[1]
        \REQUIRE{The matrix $A$, the interval $[a,b]$, the approximate
            spectral projector
            $P$, a $p$-dimensional subspace $\mathcal{V}^{(0)}$
            with $p\geq n_{sv}$, and $k=0$.}
        \ENSURE{$n_{sv}$ converged Ritz triplets $(\hat{\sigma}^{(k)},\hat{u}^{(k)},\hat{v}^{(k)})$.}
        \WHILE{not converged}
        \STATE{$k\leftarrow k+1$.}
        \STATE{Construct the right searching subspace: $\mathcal{V}^{(k)}=P\mathcal{V}^{(k-1)}$,
        and the left searching subspace $\mathcal{U}^{(k)}=A\mathcal{V}^{(k)}$.}
        \STATE\label{algstep:RR}{The Rayleigh--Ritz projection:
        find $\hat{u}^{(k)}\in \mathcal{U}^{(k)},\hat{v}^{(k)}\in \mathcal{V}^{(k)}, \hat\sigma^{(k)}\geq 0$
            with $\|\hat{u}^{(k)}\|=\|\hat{v}^{(k)}\|=1$ satisfying
            $A\hat{v}^{(k)}-\hat{\sigma}^{(k)}\hat{u}^{(k)}\perp \mathcal{U}^{(k)}, A^T\hat{u}^{(k)}-\hat{\sigma}^{(k)}\hat{v}^{(k)}\perp \mathcal{V}^{(k)}$.}
        \STATE{Compute the residual norms $\|r\|$, defined by \eqref{resnorm}, of $(\hat{\sigma}^{(k)},\hat{u}^{(k)},\hat{v}^{(k)})$ for all
        the $\hat{\sigma}^{(k)}\in [a,b]$.}
        \ENDWHILE
    \end{algorithmic}
\end{algorithm}

If $P=P_{S}$ defined by \eqref{ps1} and the subspace dimension $p=n_{sv}$,
then under the condition that the initial subspace $\mathcal{V}^{(0)}$ is not deficient in ${\rm span}\{V_{in},V_{ab}\}$,
\Cref{alg:subspace iteration} finds the $n_{sv}$ desired singular triplets
in one iteration since $\mathcal{V}^{(1)}
    ={\rm span}\{V_{in},V_{ab}\}$ and $\mathcal{U}^{(1)}$ are
the exact right and left singular subspaces of $A$ associated with all the
$\sigma\in [a,b]$.

The following lemma is about how to estimate the trace of a SPSD matrix by Monte--Carlo simulation \cite{avron2011randomized,Cortinovis2021onrandom}.

\begin{lemma}\label{lem:stochastic estimation}
    Let $P$ be an $n\times n$ SPSD matrix. Define $H_{M}=\frac{1}{M}\sum_{i=1}^{M}z_i^T P z_i$,
    where the components $z_{ij}$ of the random vectors $z_i$ are
    independent and identically distributed Rademacher random variables, i.e.,
    ${\rm Pr}(z_{ij}=1)={\rm Pr}(z_{ij}=-1)=\frac{1}{2}$. Then the expectation
    ${\rm E}(H_M)={\rm tr}(P)$ and variance ${\rm Var}(H_M)
    =\frac{2}{M}\bigl(\|P\|_F^2-\sum_{i=1}^{n}P_{ii}^2\bigr)$.
    Moreover,  ${\rm Pr}(|H_M-{\rm tr}(P)|\geq \epsilon \ {\rm tr}(P))\leq \delta$ for $ M \geq 8\epsilon^{-2}(1+\epsilon)\ln(\frac{2}{\delta})\|P\|/{\rm tr}(P)$.
\end{lemma}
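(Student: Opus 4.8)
The plan is to establish the three assertions in turn: the unbiasedness ${\rm E}(H_M)={\rm tr}(P)$, the variance formula, and the concentration bound. For the expectation, I would expand $H_M=\frac{1}{M}\sum_{i=1}^M z_i^T P z_i=\frac{1}{M}\sum_{i=1}^M\sum_{j,k}P_{jk}z_{ij}z_{ik}$ and use linearity of expectation together with ${\rm E}(z_{ij}z_{ik})=\delta_{jk}$, which holds because the Rademacher entries are independent with mean zero and satisfy $z_{ij}^2=1$ almost surely. This immediately collapses the double sum to $\sum_j P_{jj}={\rm tr}(P)$, and averaging over $i$ preserves the value.

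For the variance, since the $z_i$ are i.i.d.\ we have ${\rm Var}(H_M)=\frac{1}{M}{\rm Var}(z_1^T P z_1)$, so it suffices to compute ${\rm Var}(z^T P z)$ for a single Rademacher vector $z$. I would write $z^T P z-{\rm tr}(P)=\sum_{j\neq k}P_{jk}z_jz_k$ using $z_j^2=1$, and then compute the second moment of this off-diagonal quadratic form. The key fourth-moment identity is ${\rm E}(z_jz_kz_lz_m)=1$ when the indices pair up (and $j\neq k$, $l\neq m$ forces either $\{j,k\}=\{l,m\}$ or a vanishing term), and $0$ otherwise; this gives ${\rm E}\bigl((\sum_{j\neq k}P_{jk}z_jz_k)^2\bigr)=\sum_{j\neq k}P_{jk}^2+\sum_{j\neq k}P_{jk}P_{kj}=2\sum_{j\neq k}P_{jk}^2$ by symmetry of $P$. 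Recognizing $\sum_{j\neq k}P_{jk}^2=\|P\|_F^2-\sum_j P_{jj}^2$ yields ${\rm Var}(z^T P z)=2\bigl(\|P\|_F^2-\sum_j P_{jj}^2\bigr)$, and dividing by $M$ finishes this part.

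For the concentration bound, rather than reproving it from scratch I would invoke the relevant result of \cite{roosta2015improved} (or \cite{avron2011randomized}), which for an SPSD matrix gives a Bernstein-type tail inequality for the Hutchinson estimator. The essential observation is that each summand $z_i^T P z_i$ is nonnegative (since $P$ is SPSD), has mean ${\rm tr}(P)$, and satisfies a sub-exponential bound governed by $\|P\|$; feeding these into a Bernstein inequality and requiring the failure probability to be at most $\delta$ produces exactly the stated sample-size condition $M\geq 8\epsilon^{-2}(1+\epsilon)\ln(2/\delta)\,\|P\|/{\rm tr}(P)$. The main obstacle is getting the constants in this last step to match the claimed form: the unbiasedness and variance computations are routine fourth-moment bookkeeping, but the sharp sub-exponential tail estimate requires care with the moment generating function of $z^T P z$ (diagonalizing $P$ and bounding $\prod(1-2t\lambda_i)^{-1/2}$-type products), and it is cleanest to cite the precise statement from the literature rather than re-derive it here.
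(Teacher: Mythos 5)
Your proposal is correct, but note that the paper itself offers no proof of this lemma: it is quoted verbatim as a known result on randomized (Hutchinson-type) trace estimation, with the references \cite{avron2011randomized,Cortinovis2021onrandom} cited in place of an argument. Your direct verification of the first two claims is sound: expanding $z^TPz=\sum_{j,k}P_{jk}z_jz_k$ and using ${\rm E}(z_jz_k)=\delta_{jk}$ gives unbiasedness, and the fourth-moment bookkeeping with $z_j^2=1$ and the pairing identity yields ${\rm Var}(z^TPz)=2\bigl(\|P\|_F^2-\sum_jP_{jj}^2\bigr)$, hence the stated variance after dividing by $M$. The only point to adjust is the attribution of the tail bound: the specific sample-size condition $M\geq 8\epsilon^{-2}(1+\epsilon)\ln(2/\delta)\,\|P\|/{\rm tr}(P)$, with its factor $\|P\|/{\rm tr}(P)$, is the form obtained from the Gaussian-chaos/Bernstein-type bound of Cortinovis and Kressner \cite{Cortinovis2021onrandom} specialized to SPSD matrices (using $\|P\|_F^2\leq\|P\|\,{\rm tr}(P)$), whereas the bounds of \cite{avron2011randomized} and \cite{roosta2015improved} have the different forms $6\epsilon^{-2}\ln(2\,{\rm rank}(P)/\delta)$ and $6\epsilon^{-2}\ln(2/\delta)$, respectively, and do not produce the $\|P\|/{\rm tr}(P)$ factor. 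So your plan works, but the citation for the last assertion should point to \cite{Cortinovis2021onrandom}; since the paper treats the whole lemma as imported from the literature, your explicit mean and variance computations are, if anything, more detail than the paper provides.
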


This lemma will be exploited later to estimate $n_{sv}$ and
determine the subspace dimension $p\geq n_{sv}$ reliably
in our CJ-FEAST SVDsolver.

\section{The Chebyshev--Jackson series expansion of a specific step function}
\label{sec: Chebyshev series convergence}
For an interval $[a,b]\subset [-1,1]$,
define the step function
\begin{equation}\label{hdef}
    h(x)=
    \begin{cases}
        1, \quad x\in (a,b),                       \\
        \frac{1}{2}, \quad x\in \{a, b\}, \\
        0, \quad x\in [-1,1]\setminus [a,b],
    \end{cases}
\end{equation}
where $a$ and $b$ are the discontinuity points of $h(x)$, and
$h(a)=h(b)=\frac{1}{2}$ equal the means of respective right and
left limits:
\begin{equation*}
    \frac{h(a+0)+h(a-0)}{2}=\frac{h(b+0)+h(b-0)}{2}=\frac{1}{2}.
\end{equation*}
Suppose that $h(x)$ is approximately expanded as the Chebyshev--Jackson polynomial series
of degree $d$:
\begin{equation}\label{Chebyshevseries}
    h(x)\approx \psi_d(x)=\frac{c_0}{2}+\sum\limits_{j=1}^{d}\rho_{j,d}c_j T_{j}(x),
\end{equation}
where $T_j(x)$ is the $j$-degree Chebyshev polynomial of the first kind \cite{mason2002chebyshev}:
\begin{align*}
    T_0(x)=1, \ T_1(x)=x, \quad T_{j+1}(x)=2xT_{j}(x)-T_{j-1}(x), \ j\geq 1,
\end{align*}
the Fourier coefficients
\begin{equation}\label{coefficients}
    c_j=\begin{cases}
        \frac{2}{\pi}(\arccos(a)-\arccos(b)),\quad j = 0, \\
        \frac{2}{\pi}\bigl(\frac{\sin(j\arccos(a))-\sin(j\arccos(b))}{j}\bigr),\quad j=1,2,\dots,d,
    \end{cases}
\end{equation}
and the Jackson damping factors (cf. \cite{di2016efficient,jay1999electronic})
\begin{equation}\label{dampfactor}
    \rho_{j,d}=\frac{(d+2-j)\sin(\frac{\pi}{d+2})\cos(\frac{j\pi}{d+2})+
        \cos(\frac{\pi}{d+2})\sin(\frac{j\pi}{d+2})}{(d+2)\sin\frac{\pi}{d+2}}.
\end{equation}
We can also write $\rho_{j,d}$ as
\begin{equation}\label{dampfactororigin}
    \rho_{j,d}=2\sum_{\iota=0}^{d-j}t_{\iota}t_{\iota+j}, \quad j=0,1,\dots,d
\end{equation}
with
\begin{equation}\label{Deftl}
    t_{\iota}=\frac{\sin(\frac{\iota+1}{d+2}\pi)}
    {\sqrt{2\sum_{\iota=0}^{d}\sin^2(\frac{\iota+1}{d+2}\pi)}}, \quad  \iota=0,1,\dots,d;
\end{equation}
see \cite[Section 1.1.2]{rivlin1981introduction}.

Define the function $g(\theta)$ with period $2\pi$:
\begin{equation}\label{gdef}
    g(\theta):=h(\cos\theta).
\end{equation}
Then $g(\theta)$ is an even step function and
\begin{equation}\label{gthetadef}
    g(\theta)=
    \begin{cases}
        1, \quad \theta\in (\beta,\alpha)\cup (-\alpha,-\beta),       \\
        \frac{1}{2}, \quad \theta\in \{-\alpha,-\beta,\beta,\alpha\}, \\
        0, \quad \theta\in [-\pi,\pi]\setminus ([\beta,\alpha]\cup [-\alpha,-\beta]),
    \end{cases}
\end{equation}
where $\alpha=\arccos(a)$ and $\beta=\arccos(b).$
Define the trigonometric polynomial
\begin{equation}\label{qdef}
    q_{d}(\theta):= \psi_{d}(\cos\theta)=\frac{c_0}{2}+\sum_{j=1}^{d}\rho_{j,d}c_j\cos(j\theta).
\end{equation}

Lemma 1.4 of \cite[Section 1.1.2]{rivlin1981introduction} proves that if $s(\theta)$
is continuous on $\theta\in [-\pi,\pi]$ and has period $2\pi$ then
\begin{align*}
     & \frac{1}{2\pi}\int_{-\pi}^{\pi}s(\tau)d\tau +
     \sum_{j=1}^{d}\rho_{j,d}\biggl(\frac{\cos(j\theta)}
     {\pi}\int_{-\pi}^{\pi}s(\tau)\cos(j\tau)d\tau + \\
     &\frac{\sin(j\theta)}{\pi}\int_{-\pi}^{\pi}s(\tau)\sin(j\tau)d\tau \biggr)
      = \frac{1}{\pi}\int_{-\pi}^{\pi}s(\tau+\theta)\biggl
     (\frac{1}{2}+\sum_{j=1}^{d}\rho_{j,d}\cos(j\tau)\biggr)d\tau.
\end{align*}
The above equality obviously holds when $s(\tau)$ is replaced by
our step function $g(\tau)$ defined by \eqref{gthetadef},
which is piecewise continuous and has period $2\pi$.
Since $g(\tau)$ and $\sin(j\tau)$ are even and odd functions,
respectively, we obtain
\begin{equation*}
    \frac{1}{\pi}\int_{-\pi}^{\pi}g(\tau)\cos(j\tau)d\tau = c_j,
    \quad \frac{1}{\pi}\int_{-\pi}^{\pi}g(\tau)\sin(j\tau)d\tau=0, \quad j=0,1,\dots,d.
\end{equation*}
Consequently, we have proved the following lemma, which
indicates that $q_d(\theta)$ is the convolution of $g(\theta)$ and some
function $u_d(\theta)$ over the interval $[-\pi,\pi]$.

\begin{lemma}
    Let $g(\theta)$ and $q_{d}(\theta)$ be defined as \eqref{gdef} and \eqref{qdef}, respectively. Then
    \begin{equation}\label{qd}
        q_{d}(\theta)=\frac{1}{\pi}\int_{-\pi}^{\pi}g(\tau+\theta)u_{d}(\tau)d\tau,
    \end{equation}
    where
    \begin{equation}\label{udef}
        u_{d}(\tau)=\frac{1}{2}+\sum\limits_{j=1}^{d}\rho_{j,d}\cos(j\tau).
    \end{equation}
\end{lemma}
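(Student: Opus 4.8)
The plan is to obtain \eqref{qd} directly from the convolution-type identity of Lemma 1.4 of \cite{rivlin1981introduction} quoted just above, by specializing the generic continuous $2\pi$-periodic function $s(\theta)$ there to our step function $g(\theta)$ of \eqref{gthetadef} and then evaluating the Fourier coefficients that appear on the left-hand side of that identity.

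The first step is to argue that, although Rivlin's identity is stated for functions continuous on $[-\pi,\pi]$ with period $2\pi$, it extends verbatim to the bounded, piecewise continuous, $2\pi$-periodic $g$. I would do this by a density argument: choose continuous $2\pi$-periodic functions $s_k\to g$ in $L^1([-\pi,\pi])$, obtained by smoothing $g$ across its finitely many jumps on shrinking neighbourhoods. For each fixed $\theta$ both sides of Rivlin's identity depend continuously on $s$ with respect to the $L^1$-norm: the left-hand side is a finite linear combination of the Fourier coefficients $\frac1\pi\int s\cos(j\tau)\,d\tau$ and $\frac1\pi\int s\sin(j\tau)\,d\tau$, and the right-hand side is $\frac1\pi\int_{-\pi}^{\pi} s(\tau+\theta)\bigl(\tfrac12+\sum_{j=1}^{d}\rho_{j,d}\cos(j\tau)\bigr)\,d\tau$, an integral of $s$ against the bounded kernel $u_d$. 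Letting $k\to\infty$ therefore reproduces the same identity with $s$ replaced by $g$.

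Next I would compute the Fourier coefficients of $g$. Since $g$ is even in $\theta$ and $\sin(j\tau)$ is odd, $\frac1\pi\int_{-\pi}^{\pi} g(\tau)\sin(j\tau)\,d\tau=0$ for every $j$, so all $\sin(j\theta)$ terms on the left-hand side of Rivlin's identity vanish. For the cosine coefficients, using the explicit description \eqref{gthetadef} of $g$ on $[\beta,\alpha]\cup[-\alpha,-\beta]$ with $\alpha=\arccos a$, $\beta=\arccos b$, a routine elementary integration gives $\frac1\pi\int_{-\pi}^{\pi} g(\tau)\cos(j\tau)\,d\tau=c_j$ with $c_j$ exactly as in \eqref{coefficients}; the two isolated points at which $g=\tfrac12$ have measure zero and contribute nothing.

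Finally, substituting these values, the left-hand side of Rivlin's identity collapses to $\frac{c_0}{2}+\sum_{j=1}^{d}\rho_{j,d}c_j\cos(j\theta)$, which is precisely $q_d(\theta)$ by \eqref{qdef}, while the right-hand side is $\frac1\pi\int_{-\pi}^{\pi} g(\tau+\theta)u_d(\tau)\,d\tau$ with $u_d$ as in \eqref{udef}; this is \eqref{qd}. The only genuinely delicate point is the first step, namely verifying that Rivlin's identity survives the weakening of the hypothesis from continuity to piecewise continuity, and the $L^1$-density argument above settles it; the remaining steps are routine trigonometric integration and parity bookkeeping.
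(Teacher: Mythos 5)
Your proposal is correct and follows essentially the same route as the paper: apply Rivlin's Lemma 1.4 to the step function $g$, use parity to kill the sine coefficients, and identify the cosine coefficients with the $c_j$ of \eqref{coefficients}. The only difference is that you justify extending Rivlin's identity from continuous to piecewise continuous $s$ via an $L^1$-smoothing argument, a step the paper simply declares obvious, so your write-up is a slightly more detailed version of the same proof.
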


\begin{theorem}\label{thm:Nonnegative}
    For $\theta\in \mathbb{R}$, it holds that $q_{d}(\theta)\in [0,1]$.
\end{theorem}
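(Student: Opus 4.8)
The plan is to use the convolution representation \eqref{qd} together with the non-negativity of the kernel $u_d(\tau)$. First I would establish that the Jackson kernel $u_d(\tau)$ is non-negative on $\mathbb{R}$. This is precisely where the representation \eqref{dampfactororigin}--\eqref{Deftl} of the damping factors pays off: substituting $\rho_{j,d}=2\sum_{\iota=0}^{d-j}t_\iota t_{\iota+j}$ into \eqref{udef} and using the product-to-sum identity $\cos(j\tau)=\cos((\iota+j)\tau)\cos(\iota\tau)+\sin((\iota+j)\tau)\sin(\iota\tau)$, the double sum collapses into a perfect square,
\begin{equation*}
    u_d(\tau)=\Bigl(\sum_{\iota=0}^{d}t_\iota\cos(\iota\tau)\Bigr)^2
    +\Bigl(\sum_{\iota=0}^{d}t_\iota\sin(\iota\tau)\Bigr)^2\ge 0,
\end{equation*}
where the normalization in \eqref{Deftl} ensures $\frac{1}{\pi}\int_{-\pi}^{\pi}u_d(\tau)\,d\tau=1$ (equivalently $2\sum t_\iota^2=1$, i.e.\ $\rho_{0,d}=1$). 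This last fact follows by taking $\theta$ outside $[\beta,\alpha]\cup[-\alpha,-\beta]$ in \eqref{qd} with $g\equiv 1$ replaced appropriately, or directly from \eqref{Deftl}.

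Given $u_d(\tau)\ge 0$ and $\frac{1}{\pi}\int_{-\pi}^{\pi}u_d(\tau)\,d\tau=1$, the bounds $q_d(\theta)\in[0,1]$ are immediate from \eqref{qd}: since $0\le g(\tau+\theta)\le 1$ for all $\tau,\theta$ by \eqref{gthetadef}, we get
\begin{equation*}
    0=\frac{1}{\pi}\int_{-\pi}^{\pi}0\cdot u_d(\tau)\,d\tau
    \le q_d(\theta)
    \le \frac{1}{\pi}\int_{-\pi}^{\pi}1\cdot u_d(\tau)\,d\tau=1.
\end{equation*}
So $q_d$ is sandwiched between the convolutions of the kernel with the constants $0$ and $1$, which are $0$ and $1$ respectively.

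The main obstacle is the first step: verifying the sum-of-squares collapse of $u_d(\tau)$, i.e.\ checking that the cross terms in $\bigl(\sum_\iota t_\iota\cos(\iota\tau)\bigr)^2+\bigl(\sum_\iota t_\iota\sin(\iota\tau)\bigr)^2$ reorganize exactly into $t_0^2+2\sum_{j=1}^{d}\bigl(\sum_{\iota=0}^{d-j}t_\iota t_{\iota+j}\bigr)\cos(j\tau)$ after reindexing by the gap $j=|\iota-\iota'|$, and that this matches $\tfrac12+\sum_{j=1}^d\rho_{j,d}\cos(j\tau)$ using $2t_0^2=\rho_{0,d}\cdot(2\sum t_\iota^2)$-type normalization. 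This is a bookkeeping computation rather than a conceptual difficulty, and it is essentially the content of \cite[Section 1.1.2]{rivlin1981introduction}; I would either cite it or carry out the reindexing explicitly. Once the kernel positivity is in hand, everything else is one line.
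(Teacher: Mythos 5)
Your proposal is correct and follows essentially the same route as the paper: nonnegativity of the Jackson kernel $u_d(\tau)$ via the sum-of-squares (equivalently $\bigl|\sum_\iota t_\iota \mathrm{e}^{\mathrm{i}\iota\tau}\bigr|^2$) representation coming from \eqref{dampfactororigin}--\eqref{Deftl}, then the sandwich $0\le g\le 1$ combined with the normalization $\int_{-\pi}^{\pi}u_d(\tau)\,d\tau=\pi$, which the paper obtains even more directly by integrating \eqref{udef} term by term. No gaps.
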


\begin{proof}
    By \eqref{dampfactororigin}, it is known from  \cite[Section 1.1.2]{rivlin1981introduction} that
    \begin{equation*}
        u_{d}(\tau)=\sum_{\iota=0}^{d}t_{\iota}^2+\sum_{j=1}^{d}
        \biggl(\biggl(2\sum_{\iota=0}^{d-j}t_{\iota}t_{\iota+j} \biggr)\cos(j\tau) \biggr)
        =\biggl(\sum\limits_{\iota=0}^{d}t_{\iota}\mathrm{e}^{\mathrm{i}\iota\tau}\biggr)
        \biggl(\sum_{\iota=0}^{d}t_{\iota}\mathrm{e}^{-\mathrm{i}\iota\tau}\biggr)
        \geq 0,
    \end{equation*}
    where $t_{\iota},\ \iota=0,1,\ldots,d,$ are defined by \eqref{Deftl},
    $\mathrm{i}$ is the imaginary unit, and $\mathrm{e}$ is the natural constant.
    Since $g(\theta)\geq 0$, from \eqref{qd} we have
    $q_{d}(\theta)\geq 0$.
    On the other hand,
    \begin{equation}\label{udint}
        \int_{-\pi}^{\pi}u_{d}(\tau)d\tau =
        \frac{1}{2}\int_{-\pi}^{\pi}d\tau+\sum\limits_{j=1}^{d}\rho_{j,d}
        \int_{-\pi}^{\pi}\cos(j\tau)d\tau = \pi.
    \end{equation}
    Therefore,
    \begin{equation*}
        q_{d}(\theta)=\frac{1}{\pi}\int_{-\pi}^{\pi}g(\tau+\theta)u_{d}(\tau)d\tau
        \leq \frac{1}{\pi}\int_{-\pi}^{\pi}u_{d}(\tau) d\tau=1.
    \end{equation*} \qed
\end{proof}

Next we establish {\em quantitative} results on how
fast $q_d(\theta)$ converges to $g(\theta)$ in the {\em pointwise} sense.
We first consider the case that $\theta\neq \alpha,\beta$.

\begin{theorem}\label{thm:pointwise convergence}
    Let $g(\theta)$ and $q_{d}(\theta)$ be defined as \eqref{gdef} and \eqref{qdef}, respectively.
    For $\theta\in [0,\pi]$, $\theta\not= \alpha,\beta$ and $\alpha>\beta$,
    define
    \begin{equation*}
        \Delta_{\theta}=\min\{|\theta-\alpha|,|\theta-\beta|\}.
    \end{equation*}
    Then for $d\geq 2$ we have
    \begin{equation}\label{bound-3}
        |q_{d}(\theta)-g(\theta)|\leq \frac{\pi^6}{2(d+2)^3\Delta_{\theta}^4}.
    \end{equation}
\end{theorem}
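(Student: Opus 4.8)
The plan is to combine the convolution representation~\eqref{qd} with a localization argument and an explicit formula for the kernel $u_d$. Since $\frac1\pi\int_{-\pi}^{\pi}u_d(\tau)\,d\tau=1$ by~\eqref{udint}, one writes
\[
q_d(\theta)-g(\theta)=\frac1\pi\int_{-\pi}^{\pi}\bigl(g(\tau+\theta)-g(\theta)\bigr)u_d(\tau)\,d\tau .
\]
The $2\pi$-periodic even function $g$ of~\eqref{gdef} has jumps exactly at $\pm\alpha+2k\pi$ and $\pm\beta+2k\pi$, and for $\theta\in[0,\pi]$ every one of these lies at distance at least $\Delta_\theta$ from $\theta$: for $\alpha$ and $\beta$ this is the definition of $\Delta_\theta$, and for the translated/reflected copies $\theta+\alpha$, $\theta+\beta$, $2\pi-\alpha-\theta$, $2\pi-\beta-\theta$ it follows from $0\le\theta\le\pi$ and $0\le\alpha,\beta\le\pi$. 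Hence $g(\tau+\theta)=g(\theta)$ whenever $|\tau|<\Delta_\theta$, so that part of the integral vanishes; using $0\le g\le1$ (so $|g(\tau+\theta)-g(\theta)|\le1$), the nonnegativity of $u_d$ (\Cref{thm:Nonnegative}) and its evenness, one obtains
\[
|q_d(\theta)-g(\theta)|\le\frac1\pi\int_{\Delta_\theta\le|\tau|\le\pi}u_d(\tau)\,d\tau=\frac2\pi\int_{\Delta_\theta}^{\pi}u_d(\tau)\,d\tau .
\]

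Next I would make $u_d$ explicit. From the proof of \Cref{thm:Nonnegative}, $u_d(\tau)=\bigl|\sum_{\iota=0}^{d}t_\iota\mathrm{e}^{\mathrm{i}\iota\tau}\bigr|^2$ with $t_\iota$ from~\eqref{Deftl}; evaluating the two geometric sums that appear after writing $\sin\bigl(\tfrac{\iota+1}{d+2}\pi\bigr)$ in exponential form, and using $2\sum_{\iota=0}^{d}\sin^2\bigl(\tfrac{\iota+1}{d+2}\pi\bigr)=d+2$, I get, with $N:=d+2$,
\[
u_d(\tau)=\frac{\sin^2\!\bigl(\tfrac{\pi}{N}\bigr)\,\cos^2\!\bigl(\tfrac{N\tau}{2}\bigr)}{4N\,\bigl(\sin^2\!\tfrac\tau2-\sin^2\!\tfrac{\pi}{2N}\bigr)^2}\ \le\ \frac{\pi^2}{4N^3\,\bigl(\sin^2\!\tfrac\tau2-\sin^2\!\tfrac{\pi}{2N}\bigr)^2}=:\overline u_d(\tau),
\]
the last step using $\sin(\pi/N)\le\pi/N$ and $\cos^2\le1$. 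The function $\overline u_d$ is decreasing on $\bigl(\tfrac\pi N,\pi\bigr]$, and the removable singularity of the closed form at $\tau=\tfrac\pi N$ lies strictly below $\Delta_\theta$ throughout the nontrivial regime below.

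Finally I would split on the size of $\Delta_\theta$. If $\Delta_\theta\le\bigl(\tfrac{\pi^6}{2N^3}\bigr)^{1/4}$, then $\tfrac{\pi^6}{2(d+2)^3\Delta_\theta^4}\ge1\ge|q_d(\theta)-g(\theta)|$ and~\eqref{bound-3} is trivial. Otherwise $\Delta_\theta>\bigl(\tfrac{\pi^6}{2N^3}\bigr)^{1/4}>\tfrac\pi N$, and by the monotonicity of $\overline u_d$,
\[
|q_d(\theta)-g(\theta)|\ \le\ \frac2\pi\int_{\Delta_\theta}^{\pi}\overline u_d(\tau)\,d\tau\ \le\ \frac2\pi(\pi-\Delta_\theta)\,\overline u_d(\Delta_\theta)\ =\ \frac{(\pi-\Delta_\theta)\pi}{2N^3\bigl(\sin^2\!\tfrac{\Delta_\theta}{2}-\sin^2\!\tfrac{\pi}{2N}\bigr)^2},
\]
so it remains to prove $(\pi-\Delta_\theta)\Delta_\theta^4\le\pi^5\bigl(\sin^2\!\tfrac{\Delta_\theta}{2}-\sin^2\!\tfrac{\pi}{2N}\bigr)^2$ for such $\Delta_\theta$. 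Here I would combine $\sin^2\!\tfrac{\pi}{2N}\le\tfrac{\pi^2}{4N^2}$ with the constraint $\Delta_\theta>\bigl(\tfrac{\pi^6}{2N^3}\bigr)^{1/4}$ to bound $\sin^2\!\tfrac{\pi}{2N}$ by a small multiple of $\Delta_\theta^2$, together with a lower bound for $\sin^2\!\tfrac{\Delta_\theta}{2}$ (at worst $\tfrac{\Delta_\theta^2}{\pi^2}$, sharpened to roughly $\tfrac{\Delta_\theta^2}{4}$ when $\Delta_\theta$ is bounded away from $\pi$), and verify the inequality by elementary estimates, using near $\Delta_\theta=\pi$ that the factor $(\pi-\Delta_\theta)$ tends to $0$.

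The reduction to $\frac2\pi\int_{\Delta_\theta}^{\pi}u_d$, the discontinuity–distance check, and the closed form of the kernel are routine. The main obstacle is the last step: carrying the factor $(\pi-\Delta_\theta)$ and the exact constants and selecting the right trigonometric lower bounds for $\sin(\Delta_\theta/2)$ so as to land on precisely $\pi^6/2$ rather than a larger constant — the crude inequality $\sin(\Delta_\theta/2)\ge\tfrac2\pi\cdot\tfrac{\Delta_\theta}{2}$ is tight only at $\Delta_\theta=\pi$ and too lossy in between, which forces a case analysis according to whether $\Delta_\theta$ is comparable to $\pi/(d+2)$, of order one, or close to $\pi$, and a matching choice of the threshold between the trivial and nontrivial regimes.
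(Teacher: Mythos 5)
Your first half is sound: the convolution identity \eqref{qd} together with \eqref{udint}, the observation that all discontinuities of the periodized $g$ lie at distance at least $\Delta_\theta$ from $\theta$, the nonnegativity of $u_d$ from the proof of \Cref{thm:Nonnegative}, and even the closed form $u_d(\tau)=\frac{\sin^2(\pi/N)\cos^2(N\tau/2)}{N\,(\cos(\pi/N)-\cos\tau)^2}$ (which I checked on small cases) are all correct. The genuine gap is the last step, and you name it yourself: the inequality $(\pi-\Delta_\theta)\Delta_\theta^4\leq \pi^5\bigl(\sin^2\frac{\Delta_\theta}{2}-\sin^2\frac{\pi}{2N}\bigr)^2$ on the nontrivial regime is asserted, not proved. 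It is not a cosmetic omission: with the only quantitative tools you put on the table ($\sin x\geq \frac{2x}{\pi}$ and $\sin^2\frac{\pi}{2N}\leq\frac{\pi^2}{4N^2}$, plus the threshold $\Delta_\theta^4>\frac{\pi^6}{2N^3}$), the inequality actually fails by roughly a factor of $5$ for small $\Delta_\theta$ just above the threshold, so the promised ``elementary estimates'' do not exist in the form sketched; what would be needed is a genuine multi-case analysis with sharper bounds on $\sin(\Delta_\theta/2)$ and a re-tuned threshold, none of which is carried out. In addition, replacing $\int_{\Delta_\theta}^{\pi}\overline u_d$ by $(\pi-\Delta_\theta)\,\overline u_d(\Delta_\theta)$ throws away the decay of the kernel over the tail, which is exactly what makes hitting the specific constant $\pi^6/2$ delicate. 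So as written the argument does not establish \eqref{bound-3}.

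For contrast, the paper avoids this entirely by a fourth-moment comparison rather than a tail-mass bound: since $g(\tau+\theta)-g(\theta)$ vanishes for $|\tau|<\Delta_\theta$ and is bounded by $1$ in modulus, one has $|g(\tau+\theta)-g(\theta)|\leq \tau^4/\Delta_\theta^4$ for all $\tau$, whence $|q_d(\theta)-g(\theta)|\leq \frac{1}{\pi\Delta_\theta^4}\int_{-\pi}^{\pi}\tau^4u_d(\tau)\,d\tau$. The fourth moment is then computed almost exactly: $\tau^4\leq\pi^4\sin^4(\tau/2)=\frac{\pi^4}{8}(3-4\cos\tau+\cos 2\tau)$, and by orthogonality $\frac{1}{\pi}\int_{-\pi}^{\pi}\cos(k\tau)u_d(\tau)\,d\tau=\rho_{k,d}$, so the moment is at most $\frac{\pi^4}{8}(3-4\rho_{1,d}+\rho_{2,d})$, which with the explicit formula \eqref{dampfactor} gives $\frac{\pi^6}{2(d+2)^3}$ with no case analysis and no threshold. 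If you want to salvage your kernel-tail route, the cleanest fix would be to either integrate $\overline u_d$ exactly over $[\Delta_\theta,\pi]$ instead of using the crude sup bound, or simply adopt the moment trick, which is where the constant $\pi^6/2$ really comes from.
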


\begin{proof}
    According to \eqref{gdef} and \eqref{gthetadef}, we have
    \begin{equation}\label{presult}
        g(\tau)=g(\tau - 2\pi)=0 \mbox{ \ for \ } \pi < \tau < 2\pi-\alpha.
    \end{equation}
    For any given $\theta\in [0,\pi]$, define the function
    \begin{equation}\label{defF}
        F_{\theta}(\tau)=
        \begin{cases}
            \frac{g(\tau+\theta)-g(\theta)}{\tau^4}, & \tau\not=0, \\
            0,                                       & \tau=0.
        \end{cases}
    \end{equation}
    We classify $\theta\in [0,\pi]$ as $\theta \in [0,\beta)$,
    $\theta \in (\beta,\alpha)$ and $\theta \in (\alpha,\pi]$. Note that
    \begin{itemize}
        \item[] if $\theta \in [0,\beta)$ then $\Delta_{\theta}=\beta-\theta$ and $\tau+\theta \in (-\beta,\beta)$ for $|\tau|< \Delta_{\theta}$,
        \item[] if $\theta \in (\beta,\alpha)$ then $\Delta_{\theta}={\min}\{\theta-\beta,\alpha-\theta\}$ and $\tau+\theta \in (\beta,\alpha)$  for $|\tau|< \Delta_{\theta}$,
        \item[] if $\theta \in (\alpha,\pi]$  then  $\Delta_{\theta}=\theta-\alpha$ and $\tau+\theta \in (\alpha,2\pi-\alpha)$ for $|\tau|< \Delta_{\theta}$.
    \end{itemize}
    Therefore, for any given $\theta\in [0,\pi]$ and $\theta\neq \alpha,\beta$, if
    $|\tau|< \Delta_{\theta}$, then by \eqref{gthetadef} we have
    $g(\tau+\theta)=g(\theta)$. As a result, we obtain
    \begin{equation*}
        F_{\theta}(\tau)=0  \mbox{ \ for \ } |\tau| < \Delta_{\theta}.
    \end{equation*}
    On the other hand, since $|g(\tau+\theta)-g(\theta)|\leq 1$ for
    $|\tau| \geq \Delta_{\theta}$, we have
    \begin{equation*}
        |F_{\theta}(\tau)|\leq \frac{1}{\Delta_{\theta}^4}  \mbox{ \ for \ } |\tau| \geq \Delta_{\theta}.
    \end{equation*}
    Combining the above two relations yields
    \begin{equation*}
        |F_{\theta}(\tau)|\leq \frac{1}{\Delta_{\theta}^4}  \mbox{ \ for \ }  \tau\in\mathbb{R}.
    \end{equation*}
    Exploiting \eqref{udint}, we obtain
    \begin{equation*}
        g(\theta)=\frac{1}{\pi}\int_{-\pi}^{\pi} g(\theta)u_d(\tau)d\tau.
    \end{equation*}
    Therefore, it follows from \eqref{defF} that
    \begin{align}
        |q_{d}(\theta)-g(\theta)| & =\left|\frac{1}{\pi}\int_{-\pi}^{\pi}(g(\tau+\theta)-g(\theta))
        u_{d}(\tau)d\tau\right| \notag\\
      &  \leq \frac{1}{\pi}\int_{-\pi}^{\pi}|F_{\theta}(\tau)|\tau^4u_{d}(\tau)d\tau         \notag\\
        & \leq \frac{1}{\pi}\int_{-\pi}^{\pi}\frac{\tau^4}{\Delta_{\theta}^4}u_{d}(\tau)d\tau. \label{delta4}
    \end{align}
    Making use of the inequality
    \begin{equation*}
        \left|\frac{\tau}{2}\right| \leq \frac{\pi}{2} \left|\sin(\frac{\tau}{2})\right| \ \mbox{for} \ |\tau| \leq \pi
    \end{equation*}
    (cf. \cite[Lemma 1.5, Section 1.1.2]{rivlin1981introduction}),
    we obtain
    \begin{equation*}
        \tau^4 \leq \pi^4 \sin^4(\frac{\tau}{2}) =  \pi^4 \bigl(\frac{1-\cos(\tau)}{2}\bigr)^2 = \frac{\pi^4}{8}(3-4\cos(\tau)+\cos(2\tau)) \ \mbox{for} \ |\tau| \leq \pi
    \end{equation*}
    and
    \begin{equation}\label{tau4power}
        \frac{1}{\pi}\int_{-\pi}^{\pi}\tau^4u_{d}(\tau)d\tau
        \leq \frac{\pi^3}{8}\int_{-\pi}^{\pi}(3-4\cos(\tau)+\cos(2\tau))u_{d}(\tau)d\tau .
    \end{equation}
    It follows from
    \begin{equation*}
        \frac{1}{\pi}\int_{-\pi}^{\pi} \cos(i\tau)\cos(j\tau)d\tau = \delta_{i,j} = \begin{cases}
            1, \ i= j, \\
            0, \ i \neq j,
        \end{cases} \ \mbox{for} \ i, j \geq 1
    \end{equation*}
    and \eqref{udef} that
    \begin{equation*}
        \int_{-\pi}^{\pi} \cos(k\tau)u_{d}(\tau)d\tau =\rho_{k,d} \pi \ \ \mbox{for}\ \ k=1,2.
    \end{equation*}
Therefore, combining the above relation, \eqref{tau4power}, \eqref{udef} and \eqref{dampfactor},
we obtain
    \begin{align*}
        \frac{1}{\pi}\int_{-\pi}^{\pi}\tau^4u_{d}(\tau)d\tau
        &\leq \frac{\pi^4}{8} (3 - 4\rho_{1,d} + \rho_{2,d}) \\
        &=\frac{\pi^4}{8} \left(3 - 4\cos(\frac{\pi}{d+2}) + \frac{(2d+2)\cos^2(\frac{\pi}{d+2})-d}{d+2}\right) \\
        &=\frac{\pi^4}{4(d+2)} \left(1-\cos(\frac{\pi}{d+2})\right) \left(d+3 - (d+1)\cos(\frac{\pi}{d+2})\right).
    \end{align*}
    Since
    \begin{align*}
        1-\cos(\frac{\pi}{d+2}) &= 2 \sin^2(\frac{\pi}{2d+4}) \leq 2\left(\frac{\pi}{2d+4}\right)^2 = \frac{\pi^2}{2(d+2)^2}
    \end{align*}
    and
    \begin{align}
        d+3 - (d+1)\cos(\frac{\pi}{d+2}) &= 2 + (d+1)\left(1-\cos(\frac{\pi}{d+2})\right) \notag\\
        &= 2 + 2(d+1)  \sin^2(\frac{\pi}{2d+4}) \notag \\
        &\leq 2 + \frac{(d+1)\pi^2}{2(d+2)^2} <4,\label{factor}
    \end{align}
    we get
    \begin{equation}\label{udupperbound}
        \frac{1}{\pi}\int_{-\pi}^{\pi}\tau^4u_{d}(\tau)d\tau \leq \frac{\pi^6}{2(d+2)^3}.
    \end{equation}
    The above relation and \eqref{delta4} prove \eqref{bound-3}. \qed
\end{proof}

We comment that bound \eqref{factor} is approximately equal to 2 for a modestly sized $d$, e.g., say 20, so that bound \eqref{bound-3} is approximately reduced by half as $d$ increases.

If $\theta$ is equal to the discontinuity point
$\alpha$ or $\beta$, we need to make a separate analysis. We next prove
how $q_d(\alpha)$ and $q_d(\beta)$ converge to $g(\alpha)=g(\beta)=\frac{1}{2}$.

\begin{theorem}\label{thm:convergence at dcon}
    Let $g(\theta)$ and $q_{d}(\theta)$ be defined as \eqref{gdef} and \eqref{qdef}, respectively.
    Then for $\alpha,\beta\in (0,\pi)$, $\alpha>\beta$ and $d\geq 2$ it holds that
    \begin{align}
        |q_d(\alpha)-g(\alpha)| & \leq \frac{\pi^6}{2(d+2)^3} \max \biggl\{ \frac{1}{(2\pi-2\alpha)^4}, \frac{1}{(\alpha-\beta)^4} \biggr\},\label{error for alpha} \\
        |q_d(\beta)-g(\beta)|   & \leq \frac{\pi^6}{2(d+2)^3}\max \biggl\{\frac{1}{(2\beta)^4},
        \frac{1}{(\alpha-\beta)^4}\biggr\}\label{error for beta}.
    \end{align}
\end{theorem}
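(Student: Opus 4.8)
The plan is to follow the template of the proof of \Cref{thm:pointwise convergence}, the one new ingredient being a symmetrization of $g$ about the discontinuity point that exploits the evenness of the kernel $u_d$ in \eqref{qd} together with the fact that $g(\alpha)=g(\beta)=\tfrac12$ equals the mean of the one-sided limits of $g$ there. I would treat $\theta=\alpha$ in detail and note that $\theta=\beta$ is entirely analogous. Starting from \eqref{qd}, the substitution $\tau\mapsto-\tau$ and evenness of $u_d$ give $q_d(\alpha)=\frac1\pi\int_{-\pi}^{\pi}g(\alpha-\tau)u_d(\tau)\,d\tau$ as well, so averaging the two representations yields
\[
q_d(\alpha)=\frac1\pi\int_{-\pi}^{\pi}\phi_\alpha(\tau)u_d(\tau)\,d\tau,\qquad \phi_\alpha(\tau):=\tfrac12\bigl(g(\alpha+\tau)+g(\alpha-\tau)\bigr),
\]
where $\phi_\alpha$ is even and $\phi_\alpha(0)=g(\alpha)=\tfrac12$. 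Since $\frac1\pi\int_{-\pi}^{\pi}u_d(\tau)\,d\tau=1$ by \eqref{udint}, this reduces the claim to estimating $\frac1\pi\int_{-\pi}^{\pi}\bigl(\phi_\alpha(\tau)-\tfrac12\bigr)u_d(\tau)\,d\tau$.

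The geometric heart of the proof is to show that $\phi_\alpha(\tau)=\tfrac12$ for all $|\tau|<\delta_\alpha:=\min\{\alpha-\beta,\,2\pi-2\alpha\}$. For this I would read off from the explicit form \eqref{gthetadef} of $g$ and its $2\pi$-periodicity that, for $0<\tau<\alpha-\beta$, one has $\alpha-\tau\in(\beta,\alpha)$ and hence $g(\alpha-\tau)=1$, while for $0<\tau<2\pi-2\alpha$ the argument $\alpha+\tau$ runs through $(\alpha,\pi]$ and then, after wrapping by $2\pi$, through $(-\pi,-\alpha)$, on all of which $g$ vanishes, so $g(\alpha+\tau)=0$; consequently $\phi_\alpha(\tau)=\tfrac12$ on $(0,\delta_\alpha)$ and, by evenness, on $(-\delta_\alpha,\delta_\alpha)$. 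The quantity $2\pi-2\alpha$ is precisely the distance from $\alpha$ to the mirror interval $(2\pi-\alpha,2\pi-\beta)$, i.e.\ the $2\pi$-translate of $(-\alpha,-\beta)$, which is the first place to the right of $\alpha$ where $g$ is again equal to $1$.

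From here the argument copies \Cref{thm:pointwise convergence} almost verbatim. Setting $F_\alpha(\tau)=\bigl(\phi_\alpha(\tau)-\tfrac12\bigr)/\tau^4$ for $\tau\ne0$ and $F_\alpha(0)=0$, the flatness just established gives $F_\alpha\equiv0$ on $(-\delta_\alpha,\delta_\alpha)$, while $|\phi_\alpha(\tau)-\tfrac12|\le1$ and $\tau^4\ge\delta_\alpha^4$ give $|F_\alpha(\tau)|\le\delta_\alpha^{-4}$ for $|\tau|\ge\delta_\alpha$; hence $|F_\alpha(\tau)|\le\delta_\alpha^{-4}$ for all $\tau$. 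Using $u_d\ge0$ from \Cref{thm:Nonnegative} and the bound \eqref{udupperbound},
\[
|q_d(\alpha)-g(\alpha)|=\Bigl|\frac1\pi\int_{-\pi}^{\pi}F_\alpha(\tau)\,\tau^4u_d(\tau)\,d\tau\Bigr|\le\frac1{\delta_\alpha^4}\cdot\frac1\pi\int_{-\pi}^{\pi}\tau^4u_d(\tau)\,d\tau\le\frac1{\delta_\alpha^4}\cdot\frac{\pi^6}{2(d+2)^3},
\]
and $\delta_\alpha^{-4}=\max\{(\alpha-\beta)^{-4},(2\pi-2\alpha)^{-4}\}$ gives \eqref{error for alpha}. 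For $\theta=\beta$ the same steps apply with $\phi_\beta(\tau):=\tfrac12\bigl(g(\beta+\tau)+g(\beta-\tau)\bigr)$ and safe radius $\delta_\beta=\min\{\alpha-\beta,\,2\beta\}$, where $2\beta$ is now the distance from $\beta$ to the mirror point $-\beta$ (the right endpoint of $(-\alpha,-\beta)$); this produces \eqref{error for beta}.

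I expect the only genuine difficulty to be the case analysis in the geometric step: verifying in every configuration of $0<\beta<\alpha<\pi$ --- in particular when $\alpha<\pi/2$ (so that $2\pi-2\alpha>\pi$ and the wrap-around term is vacuous within $[-\pi,\pi]$) or when $\alpha-\beta$ is very small --- that the largest symmetric neighborhood of $\tau=0$ on which $\phi_\alpha$ (resp.\ $\phi_\beta$) is constantly $\tfrac12$ has half-width exactly $\min\{\alpha-\beta,\,2\pi-2\alpha\}$ (resp.\ $\min\{\alpha-\beta,\,2\beta\}$). Once this bookkeeping is settled, the remaining estimates are a word-for-word repetition of those already carried out in \Cref{thm:pointwise convergence}.
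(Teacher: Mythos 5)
Your proof is correct, and it takes a somewhat different route from the paper's. The paper splits the integral at $\tau=0$, subtracts $\tfrac12$ from each half using $\int_0^{\pi}u_d=\int_{-\pi}^{0}u_d=\tfrac{\pi}{2}$, and bounds the two halves separately with \emph{one-sided} sign estimates (one piece lies in $[0,\tfrac{\pi^6}{2(d+2)^3(2\pi-2\alpha)^4}]$, the other in $[-\tfrac{\pi^6}{2(d+2)^3(\alpha-\beta)^4},0]$), from which the stated max bound follows. You instead exploit the evenness of $u_d$ to symmetrize, writing $q_d(\alpha)=\frac1\pi\int_{-\pi}^{\pi}\phi_\alpha(\tau)u_d(\tau)\,d\tau$ with $\phi_\alpha(\tau)=\tfrac12\bigl(g(\alpha+\tau)+g(\alpha-\tau)\bigr)$, verify that $\phi_\alpha\equiv\tfrac12$ on $(-\delta_\alpha,\delta_\alpha)$ with $\delta_\alpha=\min\{\alpha-\beta,2\pi-2\alpha\}$ (your geometric bookkeeping, including the wrap-around via $2\pi$-periodicity and the check at $\theta=\pi$, is sound and matches the paper's \eqref{presult}), and then repeat the argument of \Cref{thm:pointwise convergence} verbatim with the moment bound \eqref{udupperbound}. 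Both proofs rest on the same two pillars (evenness of $u_d$ and \eqref{udupperbound}); the difference is in how the jump is handled. Your symmetrization buys a unified treatment that reduces the discontinuity case to the template of \Cref{thm:pointwise convergence} in a single two-sided estimate, and since in fact $|\phi_\alpha-\tfrac12|\le\tfrac12$ it would even yield the bounds with an extra factor $\tfrac12$; the paper's half-line split is slightly longer but retains signed, one-sided information about which direction $q_d$ can deviate from $\tfrac12$ at the endpoints, which the symmetrized argument discards. The case $\theta=\beta$ is handled identically in both, with $\delta_\beta=\min\{\alpha-\beta,2\beta\}$.
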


\begin{proof}
    We first consider the case $\theta=\alpha$. Define the functions
    \begin{equation*}
        F_{\alpha}(\tau)=
        \begin{cases}
            \frac{g(\tau+\alpha)}{\tau^4}, & \tau > 0, \\
            0,                             & \tau=0
        \end{cases}
        \ \mbox{and} \
        G_{\alpha}(\tau)=
        \begin{cases}
            \frac{g(\tau+\alpha)-1}{\tau^4}, & \tau <0, \\
            0,                               & \tau=0.
        \end{cases}
    \end{equation*}
    For $\tau\in (0,2\pi-2\alpha)$, we have
    $\tau+\alpha\in (\alpha,2\pi-\alpha)$.
    Therefore, from \eqref{gthetadef} and \eqref{presult},  we obtain $g(\tau+\alpha)=0$,
    showing that
    \begin{equation*}
        F_{\alpha}(\tau)=0  \mbox{ \ for \ } 0<\tau < 2\pi-2\alpha.
    \end{equation*}
    On the other hand, $0\leq g(\tau+\alpha)\leq 1$ means that
    \begin{equation*}
        0 \leq F_{\alpha}(\tau) \leq \frac{1}{(2\pi-2\alpha)^4}  \mbox{ \ for \ } \tau \geq 2\pi-2\alpha.
    \end{equation*}
    Combining the above two relations yields
    \begin{equation*}
        0 \leq F_{\alpha}(\tau) \leq \frac{1}{(2\pi-2\alpha)^4}  \mbox{ \ for \ }  \tau \geq 0.
    \end{equation*}

    For $\tau\in (\beta-\alpha,0)$, we have
    $\tau+\alpha\in (\beta,\alpha)$. Therefore, from
    \eqref{gthetadef}, we have $g(\tau+\alpha)=1$, leading to
    \begin{equation*}
        G_{\alpha}(\tau)=0  \mbox{ \ for \ } \beta-\alpha < \tau <0.
    \end{equation*}
    On the other hand, by $-1\leq g(\tau+\alpha)-1\leq 0$, we have
    \begin{equation*}
        -\frac{1}{(\alpha-\beta)^4} \leq G_{\alpha}(\tau) \leq 0  \mbox{ \ for \ } \tau \leq \beta-\alpha.
    \end{equation*}
    The above two relations show that
    \begin{equation*}
        -\frac{1}{(\alpha-\beta)^4} \leq G_{\alpha}(\tau) \leq 0 \mbox{ \ for \ }  \tau \leq 0.
    \end{equation*}
    Since $u_d(\tau)$ is an even function and $\int_{-\pi}^{\pi}u_{d}(\tau)d\tau=\pi$ (cf. \eqref{udint}),
    we have
    \begin{equation}\label{udpi/2}
        \int_{-\pi}^{0}u_{d}(\tau)d\tau=\int_{0}^{\pi}u_{d}(\tau)d\tau=\frac{\pi}{2}.
    \end{equation}
     Keep in mind $g(\alpha)=\frac{1}{2}$. Therefore,
    \begin{align*}
        q_d(\alpha)-\frac{1}{2} & =\frac{1}{\pi}\int_{-\pi}^{\pi}g(\tau+\alpha)u_{d}(\tau)d\tau-\frac{1}{2}                                                              \\
                                & =\frac{1}{\pi}\int_{0}^{\pi}g(\tau+\alpha)u_{d}(\tau)d\tau+\frac{1}{\pi}\int_{-\pi}^{0}g(\tau+\alpha)u_{d}(\tau)d\tau-\frac{1}{2}      \\
                                & =\frac{1}{\pi}\int_{0}^{\pi}g(\tau+\alpha)u_{d}(\tau)d\tau+\frac{1}{\pi}\int_{-\pi}^{0}(g(\tau+\alpha)-1)u_{d}(\tau)d\tau              \\
                                & =\frac{1}{\pi}\int_{0}^{\pi}F_{\alpha}(\tau)\tau^4 u_{d}(\tau)d\tau+\frac{1}{\pi}\int_{-\pi}^{0}G_{\alpha}(\tau)\tau^4 u_{d}(\tau)d\tau.
    \end{align*}
    Exploiting \eqref{udupperbound}, we obtain
    \begin{align*}
         & 0\leq \frac{1}{\pi}\int_{0}^{\pi}F_{\alpha}(\tau)\tau^4 u_{d}(\tau)d\tau \leq \frac{1}{(2\pi-2\alpha)^4}\frac{1}{\pi}\int_{0}^{\pi}\tau^4 u_{d}(\tau)d\tau \leq \frac{1}{(2\pi-2\alpha)^4}\frac{\pi^6}{2(d+2)^3},      \\
         & 0\geq \frac{1}{\pi}\int_{-\pi}^{0}G_{\alpha}(\tau)\tau^4 u_{d}(\tau)d\tau \geq -\frac{1}{(\alpha-\beta)^4}\frac{1}{\pi}\int_{-\pi}^{0}\tau^4 u_{d}(\tau)d\tau \geq -\frac{1}{(\alpha-\beta)^4}\frac{\pi^6}{2(d+2)^3},
    \end{align*}
    which proves \eqref{error for alpha}.

    Now we consider the case $\theta = \beta$. Define the functions
    \begin{equation*}
        F_{\beta}(\tau)=
        \begin{cases}
            \frac{g(\tau+\beta)-1}{\tau^4}, & \tau > 0, \\
            0,                              & \tau=0
        \end{cases}
        \ \mbox{and} \
        G_{\beta}(\tau)=
        \begin{cases}
            \frac{g(\tau+\beta)}{\tau^4}, & \tau <0, \\
            0,                            & \tau=0.
        \end{cases}
    \end{equation*}
    For $\tau\in (0,\alpha-\beta)$, we have $\tau+\beta\in (\beta,\alpha)$.
    Therefore, by \eqref{gthetadef}, we obtain $g(\tau+\beta)=1$, so that
    \begin{equation*}
        F_{\beta}(\tau)=0  \mbox{ \ for \ } 0<\tau < \alpha-\beta.
    \end{equation*}
    On the other hand, by $-1\leq g(\tau+\beta)-1\leq 0$, we obtain
    \begin{equation*}
        -\frac{1}{(\alpha-\beta)^4} \leq F_{\beta}(\tau) \leq 0  \mbox{ \ for \ } \tau \geq \alpha-\beta.
    \end{equation*}
    Combining the above two relations yields
    \begin{equation*}
        -\frac{1}{(\alpha-\beta)^4} \leq F_{\beta}(\tau) \leq 0   \mbox{ \ for \ }  \tau \geq 0.
    \end{equation*}
    Since $\tau\in (-2\beta,0)$ means that $\tau+\beta\in (-\beta,\beta)$,
    by \eqref{gthetadef} we have $g(\tau+\beta)=0$, leading to
    \begin{equation*}
        G_{\beta}(\tau)=0  \mbox{ \ for \ } -2\beta <\tau < 0.
    \end{equation*}
    On the other hand, since $0\leq g(\tau+\beta)\leq 1$, we have
    \begin{equation*}
        0 \leq G_{\beta}(\tau) \leq \frac{1}{(2\beta)^4}  \mbox{ \ for \ } \tau \leq -2\beta.
    \end{equation*}
    Therefore,
    \begin{equation*}
        0\leq G_{\beta}(\tau) \leq \frac{1}{(2\beta)^4} \mbox{ \ for \ }  \tau \leq 0.
    \end{equation*}
    Keep in mind $g(\beta)=\frac{1}{2}$.
    As done for $q_d(\alpha)-\frac{1}{2}$, we have
    \begin{align*}
        q_d(\beta)-\frac{1}{2} & =\frac{1}{\pi}\int_{-\pi}^{\pi}g(\tau+\beta)u_{d}(\tau)d\tau-\frac{1}{2}                                                             \\
                               & =\frac{1}{\pi}\int_{0}^{\pi}F_{\beta}(\tau)\tau^4 u_{d}(\tau)d\tau+
                               \frac{1}{\pi}\int_{-\pi}^{0}G_{\beta}(\tau)\tau^4 u_{d}(\tau)d\tau.
    \end{align*}
    By \eqref{udupperbound}, we have
    \begin{align*}
         & 0\geq \frac{1}{\pi}\int_{0}^{\pi}F_{\beta}(\tau)\tau^4 u_{d}(\tau)d\tau \geq -\frac{1}{(\alpha-\beta)^4}\frac{1}{\pi}\int_{0}^{\pi}\tau^4 u_{d}(\tau)d\tau \geq -\frac{1}{(\alpha-\beta)^4}\frac{\pi^6}{2(d+2)^3}, \\
         & 0\leq \frac{1}{\pi}\int_{-\pi}^{0}G_{\beta}(\tau)\tau^4 u_{d}(\tau)d\tau \leq  \frac{1}{(2\beta)^4}\frac{1}{\pi}\int_{-\pi}^{0}\tau^4 u_{d}(\tau)d\tau
         \leq \frac{1}{(2\beta)^4}\frac{\pi^6}{2(d+2)^3},
    \end{align*}
    which proves \eqref{error for beta}. \qed
\end{proof}

By definition \eqref{Chebyshevseries} of $\psi_d(x)$ and \eqref{qdef},
by taking $\theta=\arccos(x)$,
\Cref{thm:pointwise convergence} and \Cref{thm:convergence at dcon} show how
fast $\psi_{d}(x)$ {\em pointwise}
converges to $h(x)$ for $x\in [-1,1]$. They
indicate that the approximation errors are proportional to $\frac{1}{(d+2)^3}$,
that is, apart from a constant factor,
the convergence of $\psi_d(x)$ to $h(x)$ is as least as fast as
$\frac{1}{(d+2)^3}$ for $x\in [-1,1]$. Numerical experiments
have demonstrated that the optimal convergence rate is indeed
$\frac{1}{(d+2)^3}$ and cannot be improved, as shown below.

When assessing our a-priori bounds, we should point out that the bounds
may be large overestimates of the true errors, but
that there may be cases where the actual errors and their bounds
become close to each other when $d$ increases.
Possible overestimates of our bounds are not surprising,
since the bounds are established in the worst case and
the constants, apart from $\frac{1}{(d+2)^3}$,
are the largest possible. Our aim consists in justifying that the a-priori
bound indeed yields sharp estimates of the asymptotic {\em convergence rates}
even if the constant is large, that is, we are
concerned with the {\em insight} into the convergence rates.

Keep in mind the above.
We present an example to illustrate \eqref{bound-3}, \eqref{error for alpha} and \eqref{error for beta}.
Take $[a,b] = [-0.3, 0.5]\subset [-1,1]$ and the four points
$x= -0.4, -0.3, 0.1, 0.5$, of which $-0.4$ and $0.1$ are outside and inside  $[a,b]$, respectively. Note that $\alpha=\arccos(-0.3), \beta=\arccos(0.5), \theta=\arccos(x)$ for other $x\in [-1,1]$. For each of the four $x$,
we plot the true errors $|\psi_{d}(x)-h(x)|$ and error bounds \eqref{bound-3}
for $d=1,2,\ldots,10000$ in \Cref{fig:bound}. Clearly, the bounds reflect
the asymptotic rate $\frac{1}{(d+2)^3}$ precisely, and both the bounds and the true errors converge to zero in the same rates as $d$ increases. More precisely, for $x=-0.3$ and $0.5$, the bounds are quite accurate estimates
for the true errors within an approximate
multiple 100 all the while; but for $x=-0.4$ and $0.1$, the bounds
deviate from the true errors considerably, especially for $d$ small.
We can see from the figure that the errors have already reached $0.01\sim 0.1$ for a modest $d$.

\begin{figure}[tbhp]
    \centering
    \subfloat[$x=-0.4$]{\includegraphics[scale=0.4]{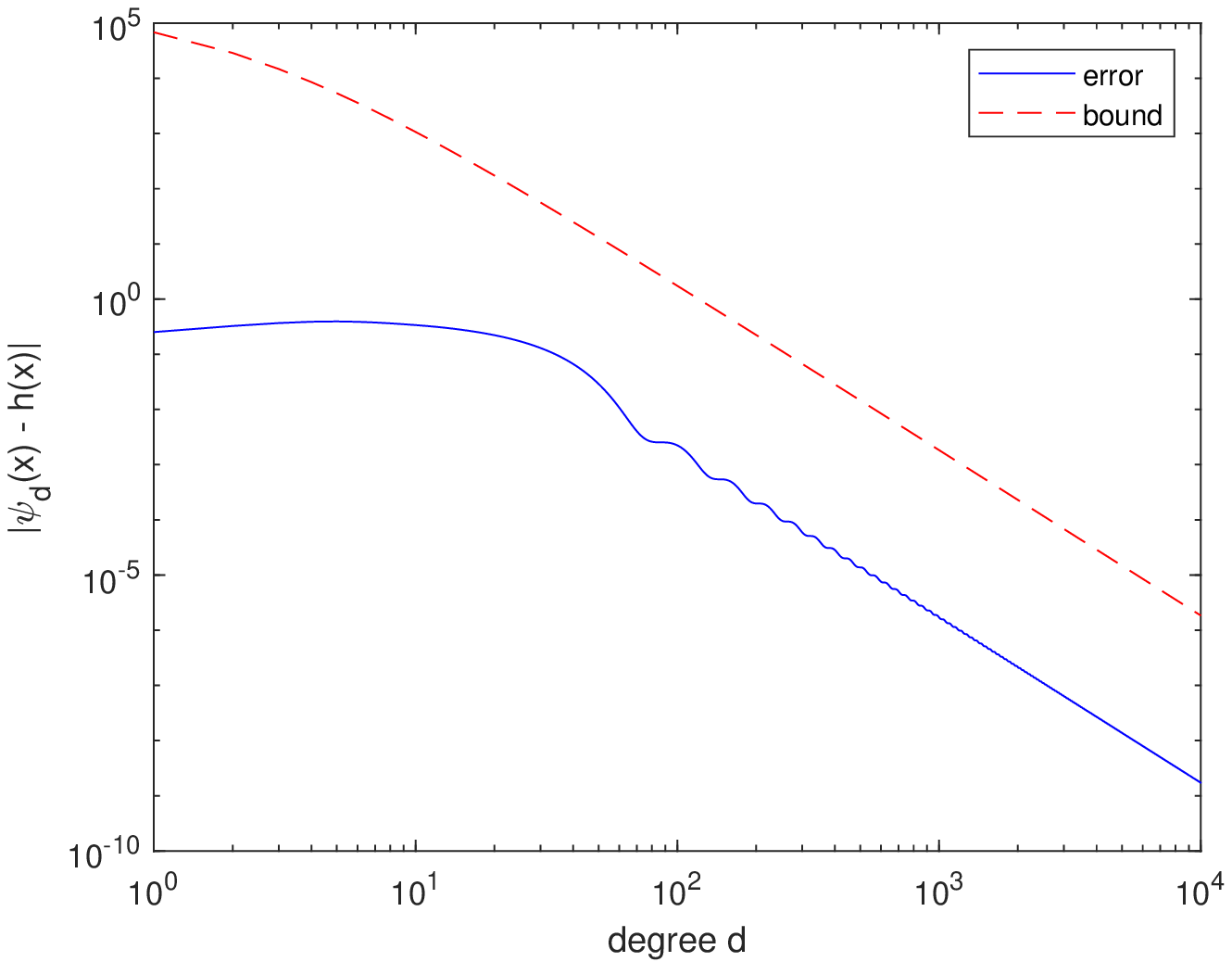}}
    \subfloat[$x=-0.3$]{\includegraphics[scale=0.4]{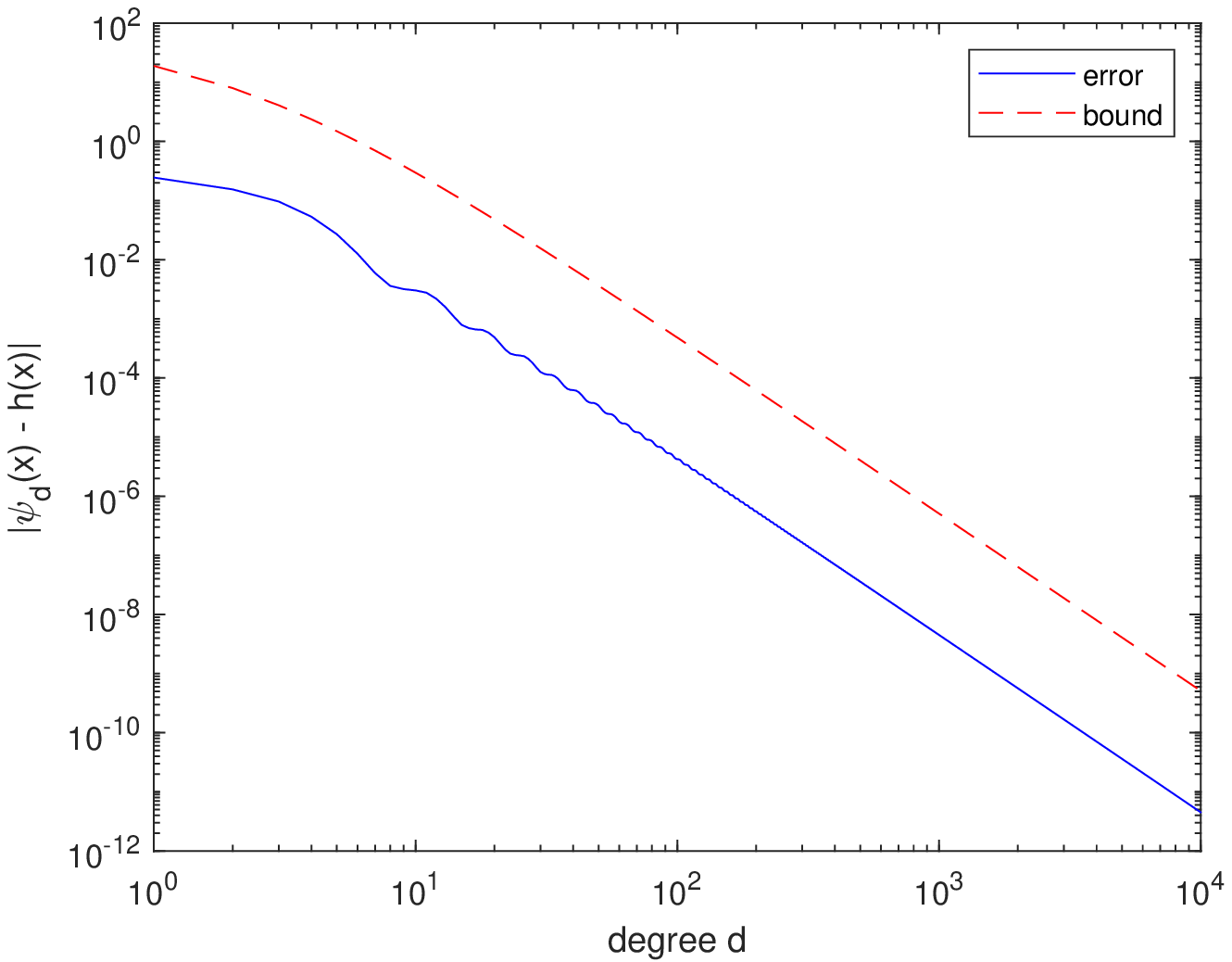}}

    \subfloat[$x=0.1$]{\includegraphics[scale=0.4]{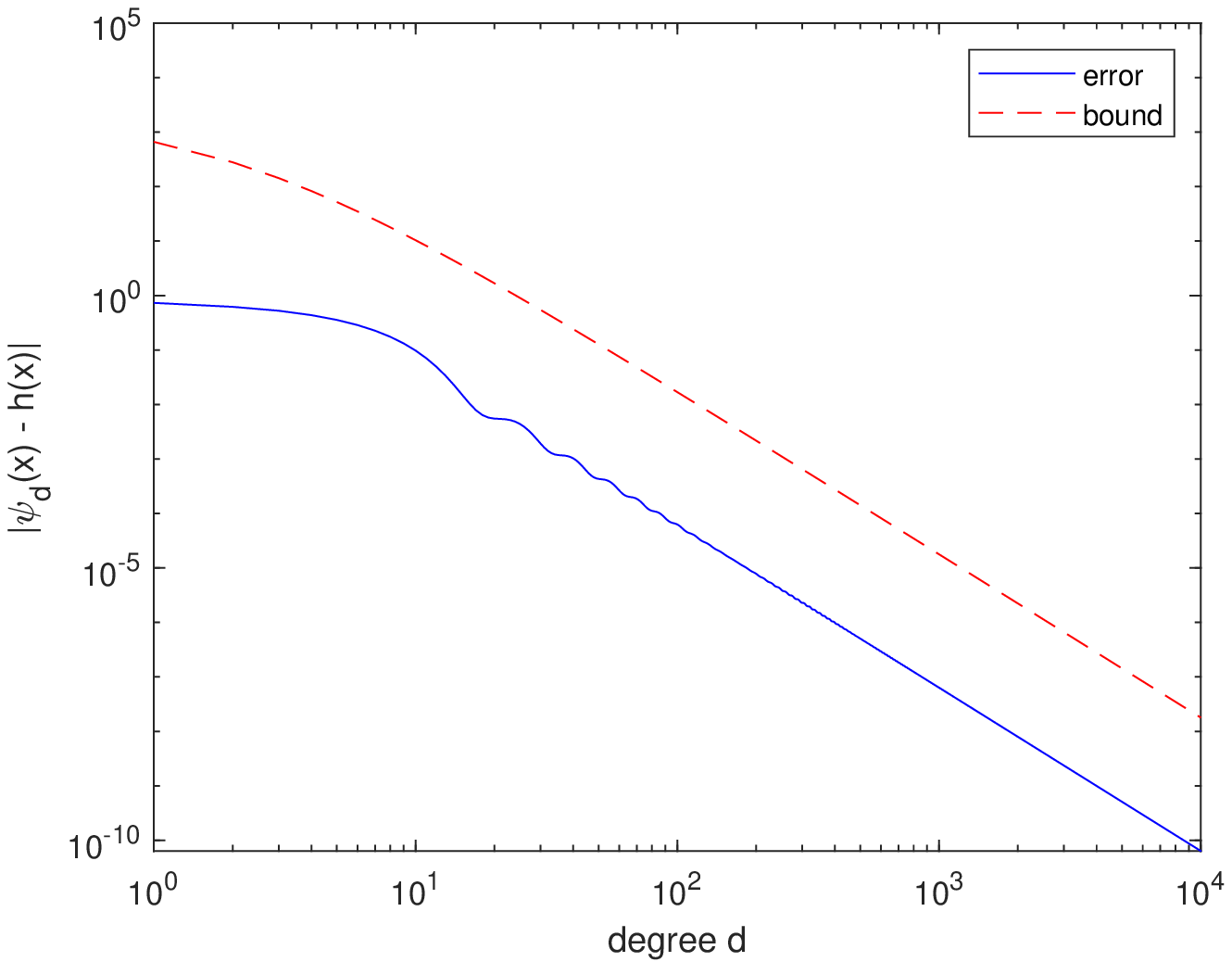}}
    \subfloat[$x=0.5$]{\includegraphics[scale=0.4]{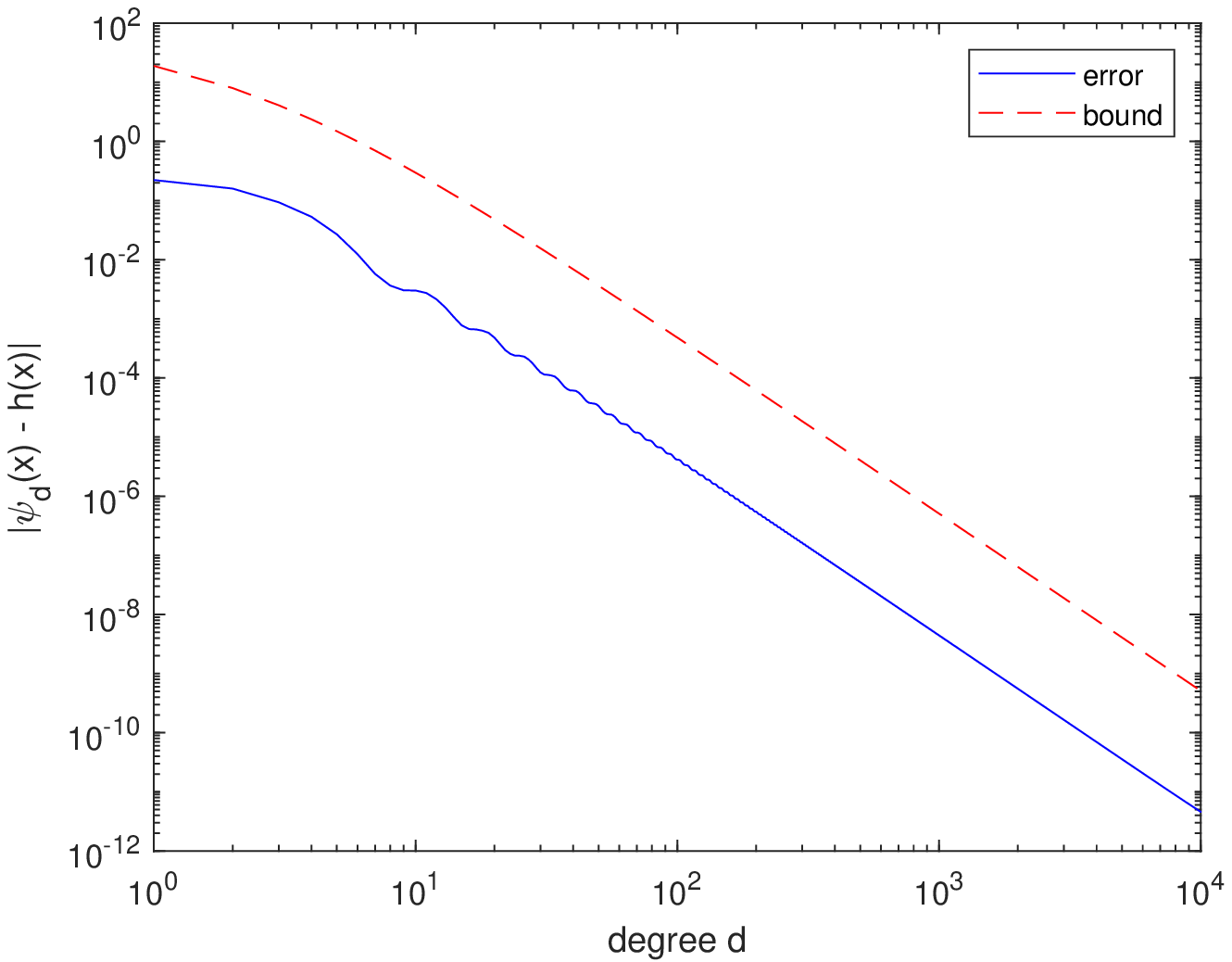}}
    \caption{True errors and error bounds.}
    \label{fig:bound}
\end{figure}

\section{The CJ-FEAST SVDsolver}
\label{sec: cross product method}

\subsection{Approximate spectral projector and its accuracy}\label{sec4.1}
We use the linear transformation
\begin{equation}\label{defl}
    l(x)=\frac{2x-\|A\|^2-\sigma_{\min}^2}{\|A\|^2-\sigma_{\min}^2}
\end{equation}
to map the spectrum interval $[\sigma_{\rm{min}}^2,\|A\|^2]$ of $S=A^TA$ to $[-1,1]$. We remind that,
to use the transformation in computation, it suffices to give rough estimates   for $\|A\|$ and $\sigma_{\min}$.
We can run a Lanczos, i.e., Golub--Kahan, bidiagonalization type
method on $A$ several steps, say $20\sim 30$, to estimate them
\cite{golub2013matrix,jia2003implicitly,jia2010refined}, which costs
very little compared to that of the CJ-FEAST SVDsolver.
For a given interval $[a,b]\subset [\sigma_{\min},\|A\|]$,
define the step function
\begin{equation*}
    h(x)=
    \begin{cases}
        1,\quad x\in (l(a^2),l(b^2)),  \\
        \frac{1}{2}, \quad x \in \{l(a^2), l(b^2)\}, \\
        0,\quad x\in [-1,1]\setminus [l(a^2),l(b^2)]
    \end{cases}
\end{equation*}
and the composite function
\begin{equation*}
    f(x)=h(l(x)).
\end{equation*}
Therefore,
\begin{equation}\label{deff}
    f(x)=
    \begin{cases}
        1,\quad x\in (a^2,b^2),                        \\
        \frac{1}{2}, \quad x\in \{a^2, b^2\}, \\
        0,\quad x\in [\sigma_{\min}^2,\|A\|^2]\setminus[a^2,b^2].
    \end{cases}
\end{equation}
Recall definition \eqref{ps1} of $P_S$. It follows from the
above and \eqref{ceigen} that
\begin{equation}\label{Pdef1}
    f(S)=Vf(\Sigma^2)V^T=P_{S}.
\end{equation}

\Cref{thm:pointwise convergence} and \Cref{thm:convergence at dcon} prove that $\psi_d(l(x))$ pointwise converges to $f(x)$.
Correspondingly, we
construct an approximate spectral projector
\begin{equation}\label{hatp}
    P=\psi_{d}(l(S))=\sum_{j=0}^d \rho_{j,d}c_j T_j(l(S)),
\end{equation}
whose eigenvector matrix is $V$ and eigenvalues are
$\gamma_i:=\psi_{d}(l(\sigma_i^2))$ with $\sigma_i, i=1,2,\dots,n$ being the singular values of $A$.
For convenience, $c_0$ in \eqref{hatp} corresponds to  $\frac{c_0}{2}$ in \eqref{Chebyshevseries}. We see that, given a basis matrix of the subspace
$\mathcal{V}^{(k-1)}$,
the unique action of $P$ in \Cref{alg:subspace iteration} is to form matrix-matrix products. We only need to store the coefficients $c_j,\rho_{j,d}, j=0,\dots,d$ without forming $P$ explicitly.
We describe the computation of Chebyshev--Jackson coefficients as \Cref{alg:Spectral Projector1}.

\begin{algorithm}
    \caption{The computation of Chebyshev--Jackson coefficients}
    \label{alg:Spectral Projector1}
    \begin{algorithmic}[1]
        \REQUIRE{The matrix $A$, the interval $[a, b]$, and the series degree $d$.}
        \ENSURE{$c_j,  \rho_{j,d}, j=0,\dots,d$.}
        \STATE{$\alpha=\arccos(l(a^2)), \quad   \beta=\arccos(l(b^2))$}
        \STATE{$\zeta=\frac{\pi}{d+2}$.}
        \FOR{$j=0,1,\dots,d$}
        \STATE{ $c_j=\begin{cases}
                    \frac{\alpha-\beta}{\pi},\quad j=0, \\
                    \frac{2}{\pi}\frac{\sin(j\alpha)-\sin(j\beta)}{j},\quad j>0,
                \end{cases}$
        $\rho_{j,d}=\frac{(d+2-j)\sin\zeta\cos(j\zeta)+\cos\zeta\sin(j\zeta)}{(d+2)\sin\zeta}$}.
        \ENDFOR
    \end{algorithmic}
\end{algorithm}

Next we estimate $\|P_{S}-P\|$ and the $\gamma_i$, which are key
quantities that critically affect
the convergence of the CJ-FEAST SVDsolver to be proposed and developed.

\begin{theorem}\label{thm:accuracyps1}
    Given the interval $[a,b]\subset [\sigma_{\min},\|A\|]$, let
    \begin{align*}
         & \alpha=\arccos(l(a^2)), \quad   \beta=\arccos(l(b^2)),                                               \\
         & \Delta_{il}=|\arccos(l(\sigma_{il}^2))-\alpha|, \quad \Delta_{ir}=|\arccos(l(\sigma_{ir}^2))-\beta|, \\
         & \Delta_{ol}=|\arccos(l(\sigma_{ol}^2))-\alpha|, \quad \Delta_{or}=|\arccos(l(\sigma_{or}^2))-\beta|,
    \end{align*}
    where $\sigma_{il},\ \sigma_{ir}$ and
    $\sigma_{ol},\ \sigma_{or}$ are the singular values of $A$ that
    are the closest to the ends $a$ and $b$ from inside and outside of $[a,b]$, respectively.
    Define
    \begin{equation}\label{dmin}
        \Delta_{\min}={\min}\{\Delta_{il},\Delta_{ir},\Delta_{ol},\Delta_{or}\}.
    \end{equation}
    Then
    \begin{equation}\label{Accuracy of projector1}
        \|P_{S}-P\| \leq \frac{\pi^6}{2(d+2)^3\Delta_{\min}^4}.
    \end{equation}
    Suppose that the singular values of $A$ in $[a,b]$ are
    $\sigma_1,\ldots,\sigma_{n_{sv}}$ with $\sigma_1,\ldots,\sigma_r$ in
    $(a,b)$ and $\sigma_{r+1},\ldots, \sigma_{n_{sv}}$
    equal to $a$ or $b$
    and those in $[\sigma_{\min},\|A\|]\setminus [a,b]$
    are $\sigma_{n_{sv}+1},\ldots,\sigma_n$, and label the eigenvalues
    $\gamma_i$ of $P,\ i=1,2,\ldots,r$, $i=r+1,\ldots,n_{sv}$
    and $i=n_{sv}+1,\dots,n$
    in decreasing order, respectively.
    If
        \begin{equation}\label{dsize}
        d \geq \frac{\sqrt[3]{2}\pi^2}{\Delta_{\min}^{4/3}}-2,
    \end{equation}
    then
    \begin{equation}\label{errorp}
        \|P_{S}-P\|<\frac{1}{4}
    \end{equation}
    and
    \begin{equation}\label{evhatp}
        1\geq \gamma_1  \geq \cdots \geq \gamma_r>\frac{3}{4}> \gamma_{r+1}\geq \cdots \geq\gamma_{n_{sv}}
                >\frac{1}{4}>\gamma_{n_{sv}+1}\geq \cdots\geq \gamma_n\geq 0.
    \end{equation}
\end{theorem}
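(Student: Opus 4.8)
The plan is to reduce the norm estimate \eqref{Accuracy of projector1} to the pointwise estimates of \Cref{sec: Chebyshev series convergence}, evaluated at the images of the singular values, and then read off \eqref{errorp} and \eqref{evhatp} together with \Cref{thm:Nonnegative}. Since $S=V\Sigma^2V^T$ with $V$ orthogonal, and both $P_{S}=f(S)$ and $P=\psi_d(l(S))$ are functions of $S$, we have $P_{S}-P=V\diag\bigl(f(\sigma_i^2)-\psi_d(l(\sigma_i^2))\bigr)V^T$, so that $\|P_{S}-P\|=\max_{1\le i\le n}|f(\sigma_i^2)-\psi_d(l(\sigma_i^2))|$. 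Putting $\theta_i=\arccos(l(\sigma_i^2))\in[0,\pi]$, definitions \eqref{gdef} and \eqref{qdef} give $f(\sigma_i^2)=h(l(\sigma_i^2))=g(\theta_i)$ and $\psi_d(l(\sigma_i^2))=q_d(\theta_i)$, so the task is to bound $|g(\theta_i)-q_d(\theta_i)|$ for every $i$ by $\tfrac{\pi^6}{2(d+2)^3\Delta_{\min}^4}$.

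Next I would run a case analysis on $\theta_i$, using that $\sigma\mapsto\arccos(l(\sigma^2))$ is strictly decreasing, so $a\mapsto\alpha$, $b\mapsto\beta$ with $\alpha>\beta$, and the images of the singular values keep the reversed order. If $\theta_i\notin\{\alpha,\beta\}$, then monotonicity and the definitions of $\sigma_{il},\sigma_{ir},\sigma_{ol},\sigma_{or}$ give $\Delta_{\theta_i}\ge\Delta_{\min}$: for $\sigma_i\in(a,b)$ one has $|\theta_i-\alpha|\ge\Delta_{il}$ and $|\theta_i-\beta|\ge\Delta_{ir}$; for $\sigma_i<a$ one has $\Delta_{\theta_i}=\theta_i-\alpha\ge\Delta_{ol}$; and for $\sigma_i>b$ one has $\Delta_{\theta_i}=\beta-\theta_i\ge\Delta_{or}$. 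Hence \Cref{thm:pointwise convergence} yields $|g(\theta_i)-q_d(\theta_i)|\le\tfrac{\pi^6}{2(d+2)^3\Delta_{\theta_i}^4}\le\tfrac{\pi^6}{2(d+2)^3\Delta_{\min}^4}$. If $\theta_i=\alpha$ (that is, $\sigma_i=a$), \Cref{thm:convergence at dcon} gives a bound with denominators $(2\pi-2\alpha)^4$ and $(\alpha-\beta)^4$; since $\theta_{ol},\alpha\in[0,\pi]$ we get $\Delta_{ol}=\theta_{ol}-\alpha\le2\pi-2\alpha$, and since $\sigma_{il}\in(a,b)$ we get $\Delta_{il}=\alpha-\arccos(l(\sigma_{il}^2))<\alpha-\beta$, so both $2\pi-2\alpha$ and $\alpha-\beta$ are $\ge\Delta_{\min}$ and the same bound follows; the case $\theta_i=\beta$ is symmetric, using $2\beta\ge\Delta_{or}$ and $\alpha-\beta>\Delta_{ir}$. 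Taking the maximum over $i$ proves \eqref{Accuracy of projector1}.

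For the remaining assertions, note that \eqref{dsize} is equivalent to $(d+2)^3\ge2\pi^6/\Delta_{\min}^4$, whence the right-hand side of \eqref{Accuracy of projector1} is at most $\tfrac14$; moreover the inequalities in \eqref{bound-3} and in \Cref{thm:convergence at dcon} are in fact strict, as already the strict bound \eqref{factor} used in the proof of \Cref{thm:pointwise convergence} shows, so \eqref{Accuracy of projector1} is strict and \eqref{errorp} follows. Finally, for each $i$ the quantity $|\gamma_i-f(\sigma_i^2)|$ is a diagonal entry of $P_{S}-P$ expressed in the eigenbasis $V$, hence $|\gamma_i-f(\sigma_i^2)|\le\|P_{S}-P\|<\tfrac14$; combined with \eqref{deff} this gives $\gamma_i>\tfrac34$ when $\sigma_i\in(a,b)$, $\tfrac14<\gamma_i<\tfrac34$ when $\sigma_i\in\{a,b\}$, and $\gamma_i<\tfrac14$ when $\sigma_i\notin[a,b]$, while \Cref{thm:Nonnegative} gives $0\le\gamma_i\le1$ throughout. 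Ordering the $\gamma_i$ decreasingly within each of these three groups then yields \eqref{evhatp}.

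I expect the main obstacle to be the geometric bookkeeping in the second paragraph: verifying $\Delta_{\theta_i}\ge\Delta_{\min}$ uniformly in $i$, and at the discontinuity points $\alpha,\beta$ matching the denominators $2\pi-2\alpha$, $2\beta$, $\alpha-\beta$ coming from \Cref{thm:convergence at dcon} against $\Delta_{\min}$, which relies on the range $[0,\pi]$ of $\arccos$ and on the precise meaning of \emph{closest from inside/outside}. Everything else is routine once the pointwise estimates of \Cref{sec: Chebyshev series convergence} and \Cref{thm:Nonnegative} are in hand.
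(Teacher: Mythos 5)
Your proposal is correct and follows essentially the same route as the paper: diagonalize $P_S-P$ in the basis $V$ to reduce \eqref{Accuracy of projector1} to the pointwise bounds of \Cref{thm:pointwise convergence} and \Cref{thm:convergence at dcon} at the points $\theta_i=\arccos(l(\sigma_i^2))$, check $\Delta_{\theta_i}\ge\Delta_{\min}$ (and $2\pi-2\alpha,\ \alpha-\beta,\ 2\beta\ge\Delta_{\min}$ at the endpoints), and then combine $\|P_S-P\|<\tfrac14$ with \Cref{thm:Nonnegative} to separate the eigenvalues into the three groups of \eqref{evhatp}. Your added care about strictness of the inequality under \eqref{dsize} (traceable to the strict bound \eqref{factor}) and the explicit monotonicity bookkeeping for $\Delta_{\theta_i}$ only make explicit what the paper treats as straightforward.
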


\begin{proof}
    Since the eigenvalues of
    $P_{S}$ are
    \begin{equation*}
        f(\sigma_{i}^2)=h(l(\sigma_{i}^2))=
        \begin{cases}
            1, \quad \sigma_{i}\in (a,b),                      \\
            \frac{1}{2}, \quad \sigma_{i}=a \mbox{ or } b, \\
            0, \quad n_{sv}+1\leq i \leq n,
        \end{cases}
    \end{equation*}
    from \eqref{hatp} we obtain
    \begin{align}
        \|P_{S}-P\|     & =\|f(S)-\psi_d(l(S)) \|  =\|f(\Sigma^2)-\psi_d(l(\Sigma^2)) \|   \nonumber      \\
                        & =\max_{i=1,2,\ldots,n}|h(l(\sigma_i^2))-\psi_d(l(\sigma_i^2))|  \label{diffeig}    \\
                        & =\max_{i=1,2,\ldots,n}|h(\cos(\theta_i))-\psi_d(\cos(\theta_i))| , \nonumber
    \end{align}
    where $\theta_i=\arccos(l(\sigma_i^2))$. Note that
    \begin{equation*}
        \Delta_{\min} \leq {\min}\{2\pi-2\alpha,\alpha-\beta,2\beta\}.
    \end{equation*}
    It then follows from \Cref{thm:pointwise convergence} and \Cref{thm:convergence at dcon}
    that \eqref{Accuracy of projector1} holds.
    It is straightforward to justify from \eqref{Accuracy of projector1} that if
    $d$ satisfies \eqref{dsize} then $\|P_{S}-P\|<\frac{1}{4}$.

    It is known from \Cref{thm:Nonnegative} that the eigenvalues
    $\gamma_i=\psi_d(l(\sigma_i^2)),\,i=1,2,\ldots,n$ of $P$ are in $[0,1]$,
    showing that $P$ is SPSD. Therefore, from \eqref{diffeig} we
    obtain
    \begin{equation*}
        \|P_{S}-P\|=\max\biggl\{\max_{\sigma_{i}\in (a,b)}1-\gamma_i,
        \max_{\sigma_{i}= a \ {\rm or }\ b}\left|\frac{1}{2}-\gamma_i\right|,
        \max_{i = n_{sv}+1,\ldots,n}\gamma_i\biggr\}.
    \end{equation*}
    The above relation and \eqref{errorp} show that
    \begin{align*}
        0\leq 1-\gamma_i&<\frac{1}{4},\ \sigma_{i}\in (a,b),\\
        \left|\frac{1}{2}-\gamma_i\right|&<\frac{1}{4},\ \sigma_{i}= a \ {\rm or}\ b,\\
        0\leq \gamma_i&<\frac{1}{4},\ i=n_{sv}+1,\ldots,n.
    \end{align*}
    With the labeling order of $\gamma_i, i=1,2,\ldots,n$,
    the above proves \eqref{evhatp}. \qed
\end{proof}

\begin{remark}
    \Cref{thm:accuracyps1} shows that if the approximate spectral projector has some
    accuracy, e.g., \eqref{errorp}, then the dominant eigenvalues
    $\gamma_1,\ldots,\gamma_{n_{sv}}$ of $P$
    correspond to the desired singular values $\sigma_1,\ldots,\sigma_{n_{sv}}$
    and the associated dominant subspace are the corresponding
    right singular subspace.
    Moreover, if none of $a$ and $b$ is a singular value of $A$, then
    $\|P-P_S\|<\frac{1}{2}$ is enough to guarantee such properties.
    The previous example has justified that
    $\|P-P_S\|$ is reasonably small for a modest $d$; see \Cref{fig:bound}.
    In applications, we know nothing about the singular values of $A$ and $\Delta_{\min}$,
    and a practical selection strategy for $d$ is particularly appealing.
    Without a priori information on the distribution of singular values of $A$,
    suppose that the $\theta_i$ are uniformly distributed approximately, i.e.,
    $\Delta_{\min}\approx\frac{\alpha-\beta}{n_{sv}}$.
    Then \eqref{dsize} reads as
    \begin{equation*}
        d \geq \frac{\sqrt[3]{2}\pi^2 n_{sv}^{4/3}}{(\alpha-\beta)^{4/3}}-2.
    \end{equation*}
    However, the bounds in \Cref{thm:pointwise convergence}
    and \Cref{thm:convergence at dcon}, though the asymptotic
    convergence rates are optimal, are generally considerable overestimates,
    as \Cref{fig:bound} has indicated.
    A key is that the factor $\alpha-\beta$ in the denominator that is
    critical and determines the accuracy of $P$; the smaller $\alpha-\beta$ is,
    the harder it is to approximate the step function.
    Therefore, we propose to choose
    \begin{equation}\label{dchoice}
        d=\left\lceil \frac{D\pi^2}{(\alpha-\beta)^{4/3}}\right\rceil-2
    \end{equation}
    with $D$ some modest constant.
    We will propose selection strategies for choosing $D$ in \eqref{dchoice} in subsequent algorithms.
\end{remark}

\begin{remark}\label{rem:error}
    As $d$ increases, $\gamma_i\approx 1,\ i=1,2,\ldots,r$,
    $\gamma_i\approx \frac{1}{2},\ i=r+1,\ldots, n_{sv}$,
    and $\gamma_i\approx 0,\ i = n_{sv}+1,\ldots,n$.
    In fact, by \eqref{Accuracy of projector1},
    we can make $\|P_{S}-P\|<\epsilon$ with
    $\epsilon$ arbitrarily small by increasing $d$. In this case, we have
    \begin{align}
        1-\epsilon<&\gamma_i\leq 1, i = 1,2,\dots,r,\label{est1}\\
        \frac{1}{2}-\epsilon<&\gamma_i<\frac{1}{2}+\epsilon, i = r+1,\dots,n_{sv}, \label{est2}\\
        0\leq&\gamma_i<\epsilon,\ i = n_{sv}+1,\ldots,n. \label{est3}
    \end{align}
\end{remark}

\subsection{Estimates for the number of desired singular values}
Note that the trace ${\rm tr}(P_S)=r+\frac{n_{sv}-r}{2}=\frac{r+n_{sv}}{2}$,
which equals $n_{sv}$ when none of $a$ and $b$ is a singular value of $A$.
As \Cref{alg:subspace iteration} requires that the subspace dimension
$p\geq n_{nv}$, it is critical to reliably estimate $n_{sv}$. To this end,
we first show how to choose $d$ to ensure that ${\rm tr}(P)$ approximates
${\rm tr}(P_S)$ with an arbitrarily prescribed accuracy, and then making
use of \Cref{lem:stochastic estimation} to choose $p$ that ensures
$p \geq n_{sv}$ reliably.

\begin{theorem}\label{thm:trest}
    The trace ${\rm tr}(P)$ satisfies
    \begin{equation}\label{trest}
       |{\rm tr}(P_{S})-{\rm tr}(P)|\leq n\|P_{S}-P\| \leq \frac{n\pi^6}{2(d+2)^3\Delta_{\min}^4}
    \end{equation}
with $\Delta_{\min}$ defined by \eqref{dmin}.
\end{theorem}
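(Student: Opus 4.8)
The plan is to reduce the trace difference to the spectral norm $\|P_{S}-P\|$ already bounded in \Cref{thm:accuracyps1}, so that the claim becomes an immediate corollary. First I would observe that $P_{S}=f(S)$ and $P=\psi_d(l(S))$ are both functions of the symmetric matrix $S=A^TA$, hence simultaneously diagonalized by the right singular vector matrix $V$; consequently $P_{S}-P=V\bigl(f(\Sigma^2)-\psi_d(l(\Sigma^2))\bigr)V^T$ is real symmetric with eigenvalues $f(\sigma_i^2)-\psi_d(l(\sigma_i^2))$, $i=1,2,\dots,n$, exactly as recorded in \eqref{diffeig} in the proof of \Cref{thm:accuracyps1}.

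Next, by linearity of the trace, ${\rm tr}(P_{S})-{\rm tr}(P)={\rm tr}(P_{S}-P)=\sum_{i=1}^n\bigl(f(\sigma_i^2)-\psi_d(l(\sigma_i^2))\bigr)$. Taking absolute values and applying the triangle inequality gives
\begin{equation*}
    |{\rm tr}(P_{S})-{\rm tr}(P)|\leq \sum_{i=1}^n\bigl|f(\sigma_i^2)-\psi_d(l(\sigma_i^2))\bigr|\leq n\max_{1\leq i\leq n}\bigl|f(\sigma_i^2)-\psi_d(l(\sigma_i^2))\bigr|=n\|P_{S}-P\|,
\end{equation*}
where the final equality again uses that the $2$-norm of a real symmetric matrix equals its spectral radius, cf. \eqref{diffeig}. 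This establishes the first inequality in \eqref{trest}.

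Finally, I would invoke bound \eqref{Accuracy of projector1} of \Cref{thm:accuracyps1}, namely $\|P_{S}-P\|\leq \frac{\pi^6}{2(d+2)^3\Delta_{\min}^4}$ with $\Delta_{\min}$ defined by \eqref{dmin}, to obtain the second inequality, which completes the proof. There is no genuine obstacle here: the statement follows from \Cref{thm:accuracyps1} combined with the elementary fact $|{\rm tr}(M)|\leq n\|M\|$ for real symmetric $M$. The only point worth stating explicitly is the common eigenvector matrix $V$ of $P_{S}$ and $P$, which is precisely what was exploited to derive \eqref{diffeig}; everything else is routine.
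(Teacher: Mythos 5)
Your proposal is correct and follows essentially the same route as the paper: write ${\rm tr}(P_{S})-{\rm tr}(P)=\sum_{i=1}^n\bigl(f(\sigma_i^2)-\gamma_i\bigr)$ with $\gamma_i=\psi_d(l(\sigma_i^2))$, apply the triangle inequality, bound the sum by $n$ times the maximum, identify that maximum with $\|P_{S}-P\|$ via the common eigenvector matrix $V$ as in \eqref{diffeig}, and finish with \eqref{Accuracy of projector1}. No gaps.
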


\begin{proof}
We have
    \begin{align}
        |{\rm tr}(P_{S})- {\rm tr}(P)| &=\lvert\sum_{i=1}^n (f(\sigma_i^2)-\gamma_i)
        \rvert\leq \sum_{i=1}^n|f(\sigma_i^2)-\gamma_i| \label{over1}\\
                &\leq n \max_{i=1,2,\ldots,n}|f(\sigma_i^2)-\gamma_i| \label{over2}\\
                & = n\|P_{S}-P\|, \nonumber
    \end{align}
    which, together with \eqref{Accuracy of projector1}, proves \eqref{trest}. \qed
\end{proof}

\begin{remark}\label{rem:dchoice for nsv}
    Bound \eqref{trest} is generally
    very conservative since bounds \eqref{over1} and \eqref{over2}
    may be considerable overestimates by noticing that the signs of
    $
        f(\sigma_i^2)-\gamma_i=1-\gamma_i\geq 0,\ i=1,2,\ldots,r
    $
    and $ f(\sigma_i^2)-\gamma_i=-\gamma_i
    \leq 0, \ i=n_{sv}+1,\ldots,n$
    are opposite, and their sizes may differ greatly.
    Consequently, the factor $n$ typically behaves like $\mathcal{O}(1)$,
    so that, in terms of \Cref{thm:pointwise convergence}
    and \Cref{thm:convergence at dcon}, a modestly sized $d$ can ensure that
    the actual error is reasonably small.
\end{remark}

\begin{remark}\label{rem:Mchoice}
    Since $P$ is SPSD, we can exploit \Cref{lem:stochastic estimation} to
    derive a reliable estimate of
    ${\rm tr}(P)$ and use it as an approximation to ${\rm tr}(P_S)$.
    \Cref{lem:stochastic estimation} indicates that the smallest sample number
    $M\approx\frac{8\ln\frac{2}{\delta}}{\epsilon^2n_{sv}}$.
    Note that $\epsilon\in [10^{-2},10^{-1}]$
    means that $H_{M}$ is a reliable estimate for
    ${\rm tr}(P)$ with high probability $1-\delta\approx 1$ for $\delta \sim 10^{-2}$.
    For $n_{sv}$ ranging from a few to hundreds,
    a modest $M$ generally gives a reliable estimate for ${\rm tr}(P)$. Strikingly,
    for given $\epsilon$ and $\delta$, the bigger $n_{sv}$, the smaller $M$, i.e., the more
    easily it is to estimate a bigger $n_{sv}$.
\end{remark}

In summary, combining \Cref{rem:dchoice for nsv} and \Cref{rem:Mchoice},
we conclude that $H_M$ is a reliable estimate for ${\rm tr}(P_{S})$
when $M$ and $d$ are modest. Numerical experiments
in \Cref{sec: experiments} will show that taking
$d$ for $D \in [2, 10]$ in \eqref{dchoice} is reliable and produces
almost unchanged $H_M$'s. We present \Cref{alg:subspace dimension1} to estimate $n_{sv}$,
where $P$ is not formed explicitly and $H_M$ is efficiently computed by
exploiting the three term recurrence of Chebyshev polynomials.
In this way, it is, though a little tedious, easy to verify that the
computation of $H_{M}$ totally requires $2Md$ MVs and approximately $6Mnd$ flops,
where MV denotes a matrix-vector product with $A$ or $A^T$.

\begin{algorithm}
    \caption{Estimation of the number $n_{sv}$}
    \label{alg:subspace dimension1}
    \begin{algorithmic}[1]
        \REQUIRE{The matrix $A$, the interval $[a, b]$, the series degree $d$, and $M$ Rademacher random $n$-vectors $z_1,z_2,\dots,z_M$.}
        \ENSURE{Take $H_{M}$ as an estimate for $n_{sv}$.}
        \STATE{Apply \Cref{alg:Spectral Projector1} to compute
        the Chebyshev--Jackson coefficients.}
        \STATE{Compute $H_{M}=\frac{1}{M}\sum_{i=1}^{M}z_i^TPz_i
        =\frac{1}{M}\sum_{i=1}^{M}\sum_{j=0}^d \rho_{j,d}c_j z_i^T T_j(l(S))z_i$.}
    \end{algorithmic}
\end{algorithm}

With $H_M$ available, we find that taking
\begin{equation}\label{pchoice}
    p=\lceil \mu H_{M}\rceil
\end{equation}
with $\mu\geq 1.1$ can ensure the subspace dimension $p\geq n_{sv}$,
where $\lceil \cdot \rceil$ is the ceil function.
In fact, \Cref{lem:stochastic estimation} shows
that $|H_{M}-{\rm tr}(P)|\leq \epsilon \ {\rm tr}(P)$
with the high probability $1-\delta\approx 1$ for a modest $M$.
Therefore, $H_{M} \geq (1-\epsilon){\rm tr}(P)$ and $\mu H_{M} \geq \mu(1-\epsilon){\rm tr}(P)$.
Obviously, $\mu=1.1$ ensures that $\mu(1-\epsilon)\geq 1$ with $\epsilon \leq \frac{1}{11}$.
As a result, $p$ in \eqref{pchoice} is a reliable upper bound
for ${\rm tr}(P)$ with high probability when $M$ is of modest size.
On the other hand, ${\rm tr}(P)$
is a good approximation to ${\rm tr}(P_S)$ for a proper series
degree $d$. Therefore, once $M$ and $d$ are suitably chosen, $p$ in \eqref{pchoice}
can ensure $p\geq n_{sv}$ with high probability.
However, different $p$'s may affect the overall efficiency of the
CJ-FEAST SVDsolver. We will come back to the choice of $\mu$ after we establish
the convergence of the CJ-FEAST SVDsolver.

\subsection{The algorithm and some details}

Having determined the approximate spectral projector $P$ and the subspace dimension $p\geq n_{sv}$,
we apply \Cref{alg:subspace iteration} to $P$, and form an approximate
eigenspace of $P$ associated with its $p$ dominant eigenvalues
$\gamma_i,\ i=1,2,\ldots,p$.
We then take the current subspace as the
right projection subspace $\mathcal{V}^{(k)}$, form
the left projection subspace $\mathcal{U}^{(k)}=A\mathcal{V}^{(k)}$,
and project $A$ onto them to compute Ritz approximations
$(\hat{\sigma}_i^{(k)},\hat{u}_i^{(k)},\hat{v}_i^{(k)})$ of
the desired $n_{sv}$ singular triplets
$(\sigma_i,u_i,v_i)$, $i=1,2,\ldots,n_{sv}$.
Precisely, let the columns of $Q_1^{(k)}\in \mathbb{R}^{n\times p}$
form an orthogonal basis of $\mathcal{V}^{(k)}$ and
$AQ_1^{(k)}=Q_2^{(k)}\bar{A}^{(k)}$
be the thin QR factorizations of $AQ_1^{(k)}$, where $\bar{A}^{(k)}\in \mathbb{R}^{p\times p}$
is upper triangular. Then the columns of $Q_2^{(k)}$ form orthonormal basis
of $\mathcal{U}^{(k)}=A\mathcal{V}^{(k)}$, and
$(Q_2^{(k)})^TAQ_1^{(k)}=\bar{A}^{(k)}$
is the projection matrix.
We describe the procedure as \Cref{alg:Crossproduct-PSVD}.
The computational cost of one iteration of \Cref{alg:Crossproduct-PSVD}
is listed in \Cref{tab:Computational cost}, where, at Step 7,
we exploit the fact that the upper part of the residual of
$(\hat{\sigma}_i^{(k)}, \hat{u}_i^{(k)},\hat{v}_i^{(k)})$ is zero
and we do not compute it.

\begin{algorithm}
    \caption{The CJ-FEAST SVDsolver}
    \label{alg:Crossproduct-PSVD}
    \begin{algorithmic}[1]
        \REQUIRE{The matrix $A$, the interval $[a,b]$, the series degree $d$, and an $n$-by-$p$ column orthonormal matrix $\hat{V}^{(0)}$ with $p\geq n_{sv}$.}
        \ENSURE{The $n_{sv}$ converged Ritz triplets  $(\hat{\sigma}_i^{(k)}, \hat{u}_i^{(k)},\hat{v}_i^{(k)})$ with $\hat{\sigma}_i^{(k)}\in [a,b]$.}
        \STATE{Apply \Cref{alg:Spectral Projector1} to compute the Chebyshev--Jackson coefficients.}
        \FOR{$k=1,2,\dots,$}
        \STATE{Compute $ Y^{(k)}=P\hat{V}^{(k-1)}=\sum_{j=0}^d \rho_{j,d}c_j T_j(l(S))\hat{V}^{(k-1)}$.}
        \STATE{Make QR factorizations, and compute the projection matrix $\bar{A}^{(k)}$:\\ $Y^{(k)}=Q_{1}^{(k)}R_{1}^{(k)}$ and $AQ_{1}^{(k)}=Q_{2}^{(k)}\bar{A}^{(k)}$.}
        \STATE{Compute the SVD:    $\bar{A}^{(k)}=\bar{U}^{(k)}\hat\Sigma^{(k)}(\bar{V}^{(k)})^T$
        with $\hat{\Sigma}^{(k)}={\rm diag}(\hat{\sigma}_1^{(k)},\ldots,\hat{\sigma}_p^{(k)})$.}
        \STATE{Form the approximate left and right singular vector matrices
        $\hat{U}^{(k)}=Q_{2}^{(k)}\bar{U}^{(k)}$ and $ \hat{V}^{(k)}=Q_{1}^{(k)}\bar{V}^{(k)}$.}
        \STATE{Pick up $\hat{\sigma}_i^{(k)}\in [a,b]$, compute the residual norms of the Ritz approximations
        $(\hat{\sigma}_i^{(k)}, \hat{u}_i^{(k)},\hat{v}_i^{(k)})$,
        where $\hat{u}_i^{(k)}=\hat{U}^{(k)}e_i$ and $\hat{v}_i^{(k)}
        =\hat{V}^{(k)}e_i$, and test convergence.}
        \ENDFOR
    \end{algorithmic}
\end{algorithm}

\begin{table}[h]
        \caption{Computational cost of one iteration of \Cref{alg:Crossproduct-PSVD}.}
    \label{tab:Computational cost}
        \begin{tabular}{|c|c|c|}
            \hline
            Steps      & MVs      & flops                    \\
            \hline
            3          & $2dp$  & $4npd$                   \\
            4          &  $p$        & $2(m+n)p^2$              \\
            5          &          & $21p^3$                  \\
            6          &          & $2(m+n)p^2$              \\
            7         & $p$      & $2np$                    \\
            \hline
            Total cost & $2(d+1)p$ & $4ndp+4(m+n)p^2+2np+21p^3$ \\
            \hline
        \end{tabular}
\end{table}

Suppose that $A$ is sparse and has $\mathcal{O}(m+n)$ nonzero entries, and
take the subspace dimension
$p=\mathcal{O}(n_{sv})$ with the constant in $\mathcal{O}(\cdot)$
comparable to but bigger than one. Then from the table we see that
$2(d+1)p$ MVs cost $\mathcal{O}(2(m+n)dn_{sv})$ flops and
$4ndp+4(m+n)p^2+2np+21p^3=\mathcal{O}(ndn_{sv})+\mathcal{O}((m+n)n_{sv}^2)$.
Therefore, the flops of MVs is comparable to
the other cost when $d \geq \mathcal{O}(n_{sv})$.
If $A$ is not sparse
and non-structured, i.e., the number of its nonzero entries
is $\mathcal{O}(mn)$, then MVs cost $\mathcal{O}(2mndp)$
flops and overwhelm the others unconditionally.
As a result, we can measure the overall efficiency
of \Cref{alg:Crossproduct-PSVD} by MVs.

\section{Convergence of the CJ-FEAST SVDsolver}\label{conver}

This section is devoted to a convergence analysis of
\Cref{alg:Crossproduct-PSVD}.
We will establish several convergence results on the solver.

Recall from \eqref{ceigen} that the columns of $V$
are the right singular vectors of $A$. We partition
$V=[V_p,V_{p,\perp}]$, and set up the following notation:
\begin{align}
    V_p        & =[v_1,\dots, v_p],  \ \ V_{p,\perp}=[v_{p+1},\dots, v_{n}],\label{vdef}                                      \\
    \Gamma_{p} & =\diag(\gamma_1, \dots, \gamma_p), \ \ \Gamma_p^{\prime}=\diag(\gamma_{p+1}, \dots, \gamma_{n}),\label{gammadef} \\
    \Sigma_{p} & =\diag(\sigma_1, \dots,\sigma_p), \ \ \Sigma_p^{\prime}=\diag(\sigma_{p+1}, \dots, \sigma_{n}). \label{sigmadef}
\end{align}

It is easy to see that \Cref{alg:Crossproduct-PSVD} generates the subspaces
\begin{equation*}
    {\rm span}\{\hat{V}^{(k)}\}={\rm span}\{Q_1^{(k)}\}={\rm span}\{Y^{(k)}\}=P{\rm span}\{\hat{V}^{(k-1)}\},
\end{equation*}
showing that
\begin{equation}\label{subspace recursion}
    {\rm span}\{\hat{V}^{(k)}\}=P^k{\rm span}\{\hat{V}^{(0)}\}.
\end{equation}

\begin{theorem}\label{thm:subspace convergence}
    Suppose that $V_p^T\hat{V}^{(0)}$ is invertible and $\gamma_p>\gamma_{p+1}$. Then
    \begin{equation}\label{q1k}
        Q_{1}^{(k)}=(V_p+V_{p,\perp}E^{(k)})(M^{(k)})^{-\frac{1}{2}}U^{(k)}
    \end{equation}
    with
    \begin{align}
         & E^{(0)}=V_{p,\perp}^T\hat{V}^{(0)}(V_p^T\hat{V}^{(0)})^{-1},\
         E^{(k)}=\Gamma_p^{\prime k}E^{(0)}\Gamma_{p}^{-k},\label{Ek} \\
         & M^{(k)}=I+(E^{(k)})^TE^{(k)}\label{mkdef}
    \end{align}
    and $U^{(k)}$ being an orthogonal matrix; furthermore,
    \begin{equation}\label{normek}
        \|E^{(k)}\|\leq \biggl(\frac{\gamma_{p+1}}{\gamma_p}\biggr)^k\|E^{(0)}\|,
    \end{equation}
    and the distance $\epsilon_k:={\rm dist}({\rm span}\{Q_1^{(k)}\},{\rm span}\{V_p\})$ between
${\rm span}\{Q_1^{(k)}\}$ and ${\rm span}\{V_p\}$ {\rm (cf. \cite[Section 2.5.3]{golub2013matrix})} satisfies
    \begin{equation}\label{dist}
      \epsilon_k= \frac{\|E^{(k)}\|}{\sqrt{1+\|E^{(k)}\|^2}}\leq \biggl(\frac{\gamma_{p+1}}{\gamma_p}\biggr)^k\|E^{(0)}\|.
    \end{equation}
     Label $\hat{\sigma}_{1}^{(k)},\dots, \hat{\sigma}_{p}^{(k)}$ in the same order as $\sigma_1,\dots, \sigma_p$ in
    \Cref{thm:accuracyps1}. Then
    \begin{equation}\label{sigma}
        |(\hat\sigma_{i}^{(k)})^2-\sigma_i^2| \leq \|A\|^2 (3\epsilon_k^2+\epsilon_k^4),\ i=1,2,\ldots,n_{sv}.
    \end{equation}
\end{theorem}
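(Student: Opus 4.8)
The plan is to prove the four displayed claims in turn. For \eqref{q1k}, note first that $\gamma_p>\gamma_{p+1}\geq 0$ makes $\Gamma_p$ nonsingular. Writing $P=V\diag(\Gamma_p,\Gamma_p^{\prime})V^T$ and using \eqref{subspace recursion},
\[
P^k\hat V^{(0)}=V_p\Gamma_p^k V_p^T\hat V^{(0)}+V_{p,\perp}\Gamma_p^{\prime k}V_{p,\perp}^T\hat V^{(0)},
\]
and, since $V_p^T\hat V^{(0)}$ is invertible by hypothesis, factoring the nonsingular matrix $\Gamma_p^k V_p^T\hat V^{(0)}$ out on the right gives $P^k\hat V^{(0)}=(V_p+V_{p,\perp}E^{(k)})\Gamma_p^k V_p^T\hat V^{(0)}$ with $E^{(k)}$ exactly as in \eqref{Ek}. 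Hence ${\rm span}\{Q_1^{(k)}\}={\rm span}\{V_p+V_{p,\perp}E^{(k)}\}$, so $Q_1^{(k)}=(V_p+V_{p,\perp}E^{(k)})W$ for some nonsingular $W\in\mathbb{R}^{p\times p}$. Imposing $(Q_1^{(k)})^TQ_1^{(k)}=I$ and using $V_p^TV_p=I$, $V_p^TV_{p,\perp}=0$, $V_{p,\perp}^TV_{p,\perp}=I$ yields $W^TM^{(k)}W=I$ with $M^{(k)}$ as in \eqref{mkdef}, hence $W=(M^{(k)})^{-1/2}U^{(k)}$ with $U^{(k)}$ orthogonal, which is \eqref{q1k}. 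Then \eqref{normek} follows from $E^{(k)}=\Gamma_p^{\prime k}E^{(0)}\Gamma_p^{-k}$ together with $\|\Gamma_p^{\prime k}\|=\gamma_{p+1}^k$ and $\|\Gamma_p^{-k}\|=\gamma_p^{-k}$ (using the decreasing ordering of the $\gamma_i$ and $\gamma_p>0$).

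For \eqref{dist}, since $Q_1^{(k)}$ and $V_p$ both have orthonormal columns, ${\rm dist}({\rm span}\{Q_1^{(k)}\},{\rm span}\{V_p\})=\|(I-V_pV_p^T)Q_1^{(k)}\|=\|V_{p,\perp}^TQ_1^{(k)}\|$, and by \eqref{q1k} this equals $\|E^{(k)}(M^{(k)})^{-1/2}\|$. Passing to an SVD of $E^{(k)}$ and using $M^{(k)}=I+(E^{(k)})^TE^{(k)}$, the singular values of $E^{(k)}(M^{(k)})^{-1/2}$ are $s_i/\sqrt{1+s_i^2}$, with $s_i$ the singular values of $E^{(k)}$; taking the largest gives $\epsilon_k=\|E^{(k)}\|/\sqrt{1+\|E^{(k)}\|^2}$, and the inequality in \eqref{dist} follows from $t/\sqrt{1+t^2}\leq t$ and \eqref{normek}. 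The same SVD yields two auxiliary facts I will use below: $\|E^{(k)}(M^{(k)})^{-1/2}\|=\epsilon_k$, and, since $(M^{(k)})^{-1/2}$ has eigenvalues $1/\sqrt{1+s_i^2}\in(0,1]$, $\|I-(M^{(k)})^{-1/2}\|=1-1/\sqrt{1+\|E^{(k)}\|^2}=1-\sqrt{1-\epsilon_k^2}\leq\epsilon_k^2$.

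For \eqref{sigma}, observe that $AQ_1^{(k)}=Q_2^{(k)}\bar A^{(k)}$ is a thin QR factorization with $Q_2^{(k)}$ column orthonormal, so the $\hat\sigma_i^{(k)}$ are the singular values of $AQ_1^{(k)}$, i.e. the $(\hat\sigma_i^{(k)})^2$ are the eigenvalues of $(Q_1^{(k)})^TSQ_1^{(k)}$. Substituting \eqref{q1k} and using $SV_p=V_p\Sigma_p^2$, $SV_{p,\perp}=V_{p,\perp}(\Sigma_p^{\prime})^2$ and the orthogonality relations, the cross terms drop out and
\[
(Q_1^{(k)})^TSQ_1^{(k)}=(U^{(k)})^T(M^{(k)})^{-1/2}\bigl(\Sigma_p^2+(E^{(k)})^T(\Sigma_p^{\prime})^2E^{(k)}\bigr)(M^{(k)})^{-1/2}U^{(k)},
\]
so the $(\hat\sigma_i^{(k)})^2$ are the eigenvalues of the symmetric matrix $B(\Sigma_p^2+(E^{(k)})^T(\Sigma_p^{\prime})^2E^{(k)})B$ with $B=(M^{(k)})^{-1/2}$. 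Sorting this spectrum and the spectrum $\{\sigma_1^2,\dots,\sigma_p^2\}$ of $\Sigma_p^2$ decreasingly and applying Weyl's inequality rank by rank (the theorem's labeling of $\hat\sigma_1^{(k)},\dots,\hat\sigma_p^{(k)}$ is exactly the one making the ranks correspond), we get $|(\hat\sigma_i^{(k)})^2-\sigma_i^2|\leq\|B(\Sigma_p^2+(E^{(k)})^T(\Sigma_p^{\prime})^2E^{(k)})B-\Sigma_p^2\|$ for all $i$, in particular for $i\leq n_{sv}$. Finally I split this norm as $\|B\Sigma_p^2B-\Sigma_p^2\|+\|(E^{(k)}B)^T(\Sigma_p^{\prime})^2(E^{(k)}B)\|$, write $B\Sigma_p^2B-\Sigma_p^2=-(I-B)\Sigma_p^2-\Sigma_p^2(I-B)+(I-B)\Sigma_p^2(I-B)$, and use $\|I-B\|\leq\epsilon_k^2$, $\|E^{(k)}B\|=\epsilon_k$ and $\max\{\|\Sigma_p^2\|,\|(\Sigma_p^{\prime})^2\|\}\leq\|A\|^2$ to bound the two pieces by $\|A\|^2(2\epsilon_k^2+\epsilon_k^4)$ and $\|A\|^2\epsilon_k^2$, whose sum is $\|A\|^2(3\epsilon_k^2+\epsilon_k^4)$, i.e. \eqref{sigma}.

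The routine part is \eqref{q1k}--\eqref{dist}; the delicate part is \eqref{sigma}. There the work lies in (i) seeing that substituting \eqref{q1k} into $(Q_1^{(k)})^TSQ_1^{(k)}$ kills the cross terms, which reduces the Ritz squares to the spectrum of $B(\Sigma_p^2+(E^{(k)})^T(\Sigma_p^{\prime})^2E^{(k)})B$; (ii) the bookkeeping needed to invoke Weyl's inequality under the nonstandard ordering of the $\sigma_i$ inherited from \Cref{thm:accuracyps1}; and (iii) choosing the perturbation splitting and the estimates $\|I-B\|\leq\epsilon_k^2$, $\|E^{(k)}B\|=\epsilon_k$ so that the constant comes out exactly as $3\epsilon_k^2+\epsilon_k^4$ --- a cruder step such as $\|E^{(k)}B\|\leq\|E^{(k)}\|$ would only give a weaker bound in terms of $\|E^{(k)}\|$ rather than $\epsilon_k$.
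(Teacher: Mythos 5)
Your proposal is correct and follows essentially the same route as the paper: the same factorization $P^k\hat V^{(0)}(V_p^T\hat V^{(0)})^{-1}\Gamma_p^{-k}=V_p+V_{p,\perp}E^{(k)}$ leading to \eqref{q1k}, the same computation $\epsilon_k=\|V_{p,\perp}^TQ_1^{(k)}\|=\|E^{(k)}(M^{(k)})^{-1/2}\|$, and the same perturbation splitting of $(M^{(k)})^{-1/2}\bigl(\Sigma_p^2+(E^{(k)})^T(\Sigma_p^{\prime})^2E^{(k)}\bigr)(M^{(k)})^{-1/2}-\Sigma_p^2$ with $\|I-(M^{(k)})^{-1/2}\|\leq\epsilon_k^2$ and $\|E^{(k)}(M^{(k)})^{-1/2}\|=\epsilon_k$, followed by the standard symmetric eigenvalue perturbation (Weyl-type) bound, exactly as the paper invokes via \cite[Corollary 8.1.6]{golub2013matrix}.
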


\begin{proof}
    Expand $\hat{V}^{(0)}$ as the orthogonal direct sum of $V_p$ and $V_{p,\perp}$. Then
    \begin{equation*}
        \hat{V}^{(0)}=V_pV_p^T\hat{V}^{(0)}+V_{p,\perp}V_{p,\perp}^T\hat{V}^{(0)}
        =(V_p+V_{p,\perp}V_{p,\perp}^T\hat{V}^{(0)}(V_p^T\hat{V}^{(0)})^{-1})
        V_p^T\hat{V}^{(0)}.
    \end{equation*}
    From this and the first relation in \eqref{Ek} it follows that
    \begin{equation*}
        \hat{V}^{(0)}(V_p^T\hat{V}^{(0)})^{-1}=V_p+V_{p,\perp}E^{(0)}.
    \end{equation*}
    By $PV_p=V_p\Gamma_p$ and $PV_{p,\perp}=V_{p,\perp}\Gamma_p^{\prime}$, we obtain
    $P^kV_p=V_p\Gamma_p^k$ and
    $P^kV_{p,\perp}=V_{p,\perp}\Gamma_p^{\prime k}$. Therefore,
    \begin{align}
        P^k\hat{V}^{(0)}(V_p^T\hat{V}^{(0)})^{-1}\Gamma_{p}^{-k}
        &=V_p+P^kV_{p,\perp}E^{(0)}\Gamma_{p}^{-k} \nonumber\\
        &=V_p+V_{p,\perp}\Gamma_p^{\prime k}E^{(0)}\Gamma_{p}^{-k}
        =V_p+V_{p,\perp}E^{(k)} \label{eqsub}
    \end{align}
    with $E^{(k)}$ defined by \eqref{Ek}. It is straightforward that
    \begin{equation*}
        \|E^{(k)}\|\leq \biggl(\frac{\gamma_{p+1}}{\gamma_p}\biggr)^k\|E^{(0)}\|,
    \end{equation*}
    which is \eqref{normek}.  By \eqref{eqsub}, we obtain
    \begin{equation*}
        {\rm span}\{Q_{1}^{(k)}\}=P^k{\rm span}\{\hat{V}^{(0)}\}={\rm span}\{V_p+V_{p,\perp}E^{(k)}\}.
    \end{equation*}
    Since $Q_{1}^{(k)}$ is column orthonormal, we can express $Q_{1}^{(k)}$ as
    \begin{equation*}
        Q_{1}^{(k)}=(V_p+V_{p,\perp}E^{(k)})(M^{(k)})^{-\frac{1}{2}}U^{(k)},
    \end{equation*}
    which establishes \eqref{q1k},
    where
    \begin{equation*}
        M^{(k)}=(V_p+V_{p,\perp}E^{(k)})^{T}(V_p+V_{p,\perp}E^{(k)})=I+(E^{(k)})^TE^{(k)}
    \end{equation*}
    is the matrix in \eqref{mkdef} and $U^{(k)}$ is an orthogonal matrix.

    By the distance definition \cite[Section 2.5.3]{golub2013matrix} of two subspaces, from \eqref{q1k} we have
    \begin{equation*}
        \epsilon_k = \|V_{p,\perp}^TQ_1^{(k)}\|=\|E^{(k)}(M^{(k)})^{-1/2}U^{(k)}\|=\frac{\|E_k\|}{\sqrt{1+\|E_k\|^2}},
    \end{equation*}
    which, together with \eqref{normek}, proves \eqref{dist}.

    Exploiting \eqref{ceigen} and \eqref{q1k}, we obtain
    \begin{align*}
         & \|U^{(k)}(Q_{1}^{(k)})^T S Q_{1}^{(k)}(U^{(k)})^T-\Sigma_p^2\|       \\
         & =\|(M^{(k)})^{-1/2}(V_p^T+(E^{(k)})^TV_{p,\perp}^T) V\Sigma^2V^T (V_p+V_{p,\perp}E^{(k)})(M^{(k)})^{-1/2}-\Sigma_p^2\|   \\
         & =\|(M^{(k)})^{-1/2}(\Sigma_p^2+(E^{(k)})^T(\Sigma_p^{\prime})^2E^{(k)})(M^{(k)})^{-1/2}-\Sigma_p^2\|    \\
         & \leq \|(M^{(k)})^{-1/2}\Sigma_p^2(M^{(k)})^{-1/2}-\Sigma_p^2\|+
         \|(M^{(k)})^{-1/2}(E^{(k)})^T(\Sigma_p^{\prime})^2E^{(k)}(M^{(k)})^{-1/2}\|.
    \end{align*}
    Let $F^{(k)}=I-(M^{(k)})^{-\frac{1}{2}}.$
    Then
    \begin{equation*}
        \|F^{(k)}\|=\|I-(M^{(k)})^{-\frac{1}{2}}\| = 1-\frac{1}{\sqrt{1+\|E^{(k)}\|^2}}\leq \frac{\|E^{(k)}\|^2}{1+\|E^{(k)}\|^2} =\epsilon_k^2.
    \end{equation*}
    Therefore,
    \begin{align*}
        &\|(M^{(k)})^{-1/2}\Sigma_p^2(M^{(k)})^{-1/2}-\Sigma_p^2\|=
        \|(I-F^{(k)})\Sigma_p^2(1-F^{(k)})-\Sigma_p^2\| \\
        &=\|-\Sigma_p^2F^{(k)}-F^{(k)}\Sigma_p^2+F^{(k)}\Sigma_p^2F^{(k)}\|
        \leq \|\Sigma_p^2\|(2\epsilon_k^2+\epsilon_k^4),
    \end{align*}
    which, together with
    \begin{equation*}
        \|(M^{(k)})^{-1/2}(E^{(k)})^T(\Sigma_p^{\prime})^2E^{(k)}(M^{(k)})^{-1/2}\| \leq \|A\|^2 \epsilon_k^2,
    \end{equation*}
    yields
    \begin{equation*}
        \|U^{(k)}(Q_{1}^{(k)})^T S Q_{1}^{(k)}(U^{(k)})^T-\Sigma_p^2\| \leq \|A\|^2 (3\epsilon_k^2+\epsilon_k^4).
    \end{equation*}
    Since the eigenvalues of $U^{(k)}(Q_{1}^{(k)})^T S Q_{1}^{(k)}(U^{(k)})^T$
    are $(\hat\sigma_{i}^{(k)})^2,\ i=1,2,\ldots,p$,
    by a standard perturbation
    result \cite[Corollary 8.1.6]{golub2013matrix},
    the above relation leads to \eqref{sigma}. \qed
\end{proof}

The following theorem establishes convergence results on the
Ritz vectors $\hat{u}_i^{(k)}$ and $\hat{v}_i^{(k)}$ and a new
convergence result on the Ritz values $\hat{\sigma}_i^{(k)}$.

\begin{theorem}\label{thm:triplets convergence}
    Let $\beta^{(k)}=\|P^{(k)}S(I-P^{(k)})\|$,
    where $P^{(k)}$ is the orthogonal projector onto \textnormal{span}$\{Q_1^{(k)}\}$.
    Assume that each singular value of $A$ in $[a,b]$ is simple, and define
    \begin{equation}\label{gap}
        \delta_i^{(k)}= \underset{j\neq i}{\min}
        |\sigma_i^2-(\hat{\sigma}_j^{(k)})^2|,\ i=1,2,\ldots,n_{sv}.
    \end{equation}
    Then for $i=1,2,\dots,n_{sv}$ it holds that
    \begin{align}
        \sin\angle(\hat{v}_{i}^{(k)},v_i) & \leq \sqrt{1+\frac{(\beta^{(k)})^2}{(\delta_i^{(k)})^2}} \biggl(\frac{\gamma_{p+1}}{\gamma_i}\biggr)^k\|E^{(0)}\|,\label{sinev}     \\
        \sin\angle(\hat{u}_{i}^{(k)},u_i) & \leq  \frac{\|A\|}{\hat\sigma_{i}^{(k)}} \sin\angle(\hat{v}_{i}^{(k)},v_i),    \label{sineu}    \\
        |(\hat\sigma_{i}^{(k)})^2-\sigma_i^2|   & \leq \|A\|^2\sin^2\angle(\hat{v}_{i}^{(k)},v_i).\label{sigmacondition}
    \end{align}
\end{theorem}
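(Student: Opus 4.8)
The plan is to reduce the Rayleigh--Ritz projection for the SVD carried out in \Cref{alg:Crossproduct-PSVD} to the ordinary symmetric Rayleigh--Ritz projection of $S=A^TA$ onto the right subspace $\mathcal{V}^{(k)}={\rm span}\{Q_1^{(k)}\}$, and then to apply standard a-priori bounds for Ritz pairs of a symmetric matrix. First I would observe that the columns of $Q_2^{(k)}$ form an orthonormal basis of $A\mathcal{V}^{(k)}$ and that $AQ_1^{(k)}=Q_2^{(k)}\bar A^{(k)}$, so $Q_2^{(k)}(Q_2^{(k)})^T$ fixes $AQ_1^{(k)}$ and therefore $(Q_1^{(k)})^TSQ_1^{(k)}=(\bar A^{(k)})^T\bar A^{(k)}=\bar V^{(k)}(\hat\Sigma^{(k)})^2(\bar V^{(k)})^T$. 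Writing $\bar u_i^{(k)},\bar v_i^{(k)}$ for the $i$-th columns of $\bar U^{(k)},\bar V^{(k)}$, this shows that the squared Ritz values $(\hat\sigma_i^{(k)})^2$ are exactly the Ritz values of $S$ on $\mathcal{V}^{(k)}$ with Ritz vectors $\hat v_i^{(k)}=Q_1^{(k)}\bar v_i^{(k)}$, so in particular $(\hat\sigma_i^{(k)})^2=(\hat v_i^{(k)})^TS\hat v_i^{(k)}$ by the Galerkin condition. Moreover $A\hat v_i^{(k)}=AQ_1^{(k)}\bar v_i^{(k)}=Q_2^{(k)}\bar A^{(k)}\bar v_i^{(k)}=\hat\sigma_i^{(k)}Q_2^{(k)}\bar u_i^{(k)}=\hat\sigma_i^{(k)}\hat u_i^{(k)}$, so the upper part of the residual of each Ritz triplet is identically zero; this identity is used repeatedly below.

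Next, to prove \eqref{sinev}, I would estimate $\sin\angle(v_i,\mathcal{V}^{(k)})$ via \Cref{thm:subspace convergence}. That theorem gives ${\rm span}\{Q_1^{(k)}\}={\rm span}\{V_p+V_{p,\perp}E^{(k)}\}$ with $E^{(k)}=\Gamma_p^{\prime k}E^{(0)}\Gamma_p^{-k}$, and since the $i$-th column $v_i+V_{p,\perp}E^{(k)}e_i$ of $V_p+V_{p,\perp}E^{(k)}$ lies in $\mathcal{V}^{(k)}$, we get $\sin\angle(v_i,\mathcal{V}^{(k)})\le\|E^{(k)}e_i\|=\gamma_i^{-k}\|\Gamma_p^{\prime k}E^{(0)}e_i\|\le(\gamma_{p+1}/\gamma_i)^k\|E^{(0)}\|$, using $\Gamma_p^{-k}e_i=\gamma_i^{-k}e_i$, $\|\Gamma_p^{\prime k}\|=\gamma_{p+1}^k$ and $\gamma_i>\frac{1}{4}>0$ for $i\le n_{sv}$. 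Then I would invoke the classical a-priori bound for a Ritz vector of a symmetric matrix (see, e.g., \cite{parlett1998symmetric,saad2011numerical}), which in the present notation reads $\sin\angle(\hat v_i^{(k)},v_i)\le\sqrt{1+(\beta^{(k)})^2/(\delta_i^{(k)})^2}\,\sin\angle(v_i,\mathcal{V}^{(k)})$, with $\beta^{(k)}=\|P^{(k)}S(I-P^{(k)})\|$ the coupling norm between $\mathcal{V}^{(k)}$ and its orthogonal complement and $\delta_i^{(k)}$ the gap between $\sigma_i^2$ and the remaining Ritz values $(\hat\sigma_j^{(k)})^2$; simplicity of the $\sigma_i$ guarantees $\delta_i^{(k)}>0$ for $k$ large. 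Combining the two displayed estimates yields \eqref{sinev}.

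For \eqref{sigmacondition} and \eqref{sineu} I would decompose $\hat v_i^{(k)}=\cos\phi_i\,v_i+\sin\phi_i\,v_i^{\perp}$, where $\phi_i=\angle(\hat v_i^{(k)},v_i)$ and $v_i^{\perp}$ is a unit vector orthogonal to $v_i$. Substituting this into $(\hat\sigma_i^{(k)})^2=(\hat v_i^{(k)})^TS\hat v_i^{(k)}$ and using $Sv_i=\sigma_i^2v_i$ together with $v_i^Tv_i^{\perp}=0$ gives $(\hat\sigma_i^{(k)})^2-\sigma_i^2=\sin^2\phi_i\bigl((v_i^{\perp})^TSv_i^{\perp}-\sigma_i^2\bigr)$; since both $(v_i^{\perp})^TSv_i^{\perp}$ and $\sigma_i^2$ lie in $[\sigma_{\min}^2,\|A\|^2]$, their difference is at most $\|A\|^2$ in absolute value, which is \eqref{sigmacondition}. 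For the left Ritz vectors, $A\hat v_i^{(k)}=\cos\phi_i\,\sigma_iu_i+\sin\phi_i\,Av_i^{\perp}$, and $u_i^TAv_i^{\perp}=(A^Tu_i)^Tv_i^{\perp}=\sigma_iv_i^Tv_i^{\perp}=0$ shows $Av_i^{\perp}\perp u_i$; hence the component of $A\hat v_i^{(k)}$ orthogonal to $u_i$ has norm $\sin\phi_i\|Av_i^{\perp}\|\le\|A\|\sin\phi_i$, and dividing by $\|A\hat v_i^{(k)}\|=\hat\sigma_i^{(k)}$ gives $\sin\angle(\hat u_i^{(k)},u_i)\le(\|A\|/\hat\sigma_i^{(k)})\sin\angle(\hat v_i^{(k)},v_i)$, which is \eqref{sineu}.

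The step I expect to be the main obstacle is the reduction in the first paragraph combined with the precise use of the symmetric Ritz-vector bound: one must ensure that $A\mathcal{V}^{(k)}$ has full column rank so that $Q_2^{(k)}$ is well defined and $\hat\sigma_i^{(k)}>0$ (which holds asymptotically for the targeted triplets), and one must verify that $\beta^{(k)}$ and the gap $\delta_i^{(k)}$ enter the classical estimate in exactly the stated form. Everything else is elementary linear algebra and trigonometry.
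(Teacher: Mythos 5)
Your proposal is correct and follows essentially the same route as the paper: identify $((\hat\sigma_i^{(k)})^2,\hat v_i^{(k)})$ as Ritz pairs of $S$ on ${\rm span}\{Q_1^{(k)}\}$, bound $\sin\angle(v_i,{\rm span}\{Q_1^{(k)}\})$ by $\|E^{(k)}e_i\|\le(\gamma_{p+1}/\gamma_i)^k\|E^{(0)}\|$, invoke the classical symmetric Rayleigh--Ritz bounds of Saad for \eqref{sinev} and \eqref{sigmacondition}, and use $A\hat v_i^{(k)}=\hat\sigma_i^{(k)}\hat u_i^{(k)}$ with the orthogonal decomposition of $\hat v_i^{(k)}$ for \eqref{sineu}. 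The only cosmetic difference is that you rederive the Ritz-value estimate \eqref{sigmacondition} by an explicit Rayleigh-quotient computation where the paper simply cites the standard result, which is equally valid.
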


\begin{proof}
    Note that $((\hat{\sigma}_i^{(k)})^2,\hat{v}_{i}^{(k)})$,
    $1\leq i\leq n_{sv}$ are the Ritz pairs of $S$ with respect
    to ${\rm span}\{Q_1^{(k)}\}$. Applying \cite[Theorem 4.6, Proposition 4.5]{saad2011numerical} to our case yields
    \begin{align}
        \sin\angle(\hat{v}_{i}^{(k)},v_i)     & \leq \sqrt{1+\frac{(\beta^{(k)})^2}{(\delta_i^{(k)})^2}} \sin\angle(v_i, {\rm span}\{Q_1^{(k)}\}),\label{ab}      \\
        |(\hat{\sigma}_i^{(k)})^2-\sigma_i^2| & \leq \|S-\sigma_i^2I\|\sin^2\angle(\hat{v}_{i}^{(k)},v_i) \leq \|A\|^2\sin^2\angle(\hat{v}_{i}^{(k)},v_i),\nonumber
    \end{align}
    which proves \eqref{sigmacondition}.

    From \eqref{q1k} and \eqref{gammadef}, we obtain
    \begin{align*}
        \sin\angle(v_i,{\rm span}\{Q_1^{(k)}\}) &=\sin\angle(v_i,{\rm span}\{Q_1^{(k)}(U^{(k)})^T(M^{(k)})^{1/2}\})\\
        &= \sin\angle(v_i,V_p+V_{p,\perp}E^{(k)}) \leq
        \sin\angle(v_i,v_i+V_{p,\perp}E^{(k)}e_i)                                                                                  \\
        & =\frac{\|E^{(k)}e_i\|}{\sqrt{1+\|E^{(k)}e_i\|^2}}\leq \|E^{(k)}e_i\| \\
        &=\|\Gamma_p^{\prime k} E^{(0)}\Gamma_{p}^{-k}e_i\|
        \leq\|\Gamma_p^{\prime k} E^{(0)}\|\gamma_{i}^{-k}                                                                         \\
        & \leq \biggl(\frac{\gamma_{p+1}}{\gamma_i}\biggr)^k \|E^{(0)}\|.
    \end{align*}
    In terms of the notation in Steps 3--5 of \Cref{alg:Crossproduct-PSVD}, we have
    \begin{equation*}
        \hat{U}^{(k)}\hat{\Sigma}^{(k)}=Q_2^{(k)}\bar{U}^{(k)}\hat{\Sigma}^{(k)} = Q_2^{(k)}\bar{A}^{(k)}\bar{V}^{(k)}
        =AQ_1^{(k)}\bar{V}^{(k)}=A\hat{V}^{(k)},
    \end{equation*}
    showing that
    \begin{equation}\label{leftright}
        A\hat{v}_{i}^{(k)}=\hat{\sigma}_i^{(k)}\hat{u}_{i}^{(k)}.
    \end{equation}

    Decompose $\hat{v}_{i}^{(k)}$ into the orthogonal direct sum:
    \begin{equation*}
        \hat{v}_{i}^{(k)}=v_i\cos\angle(\hat{v}_{i}^{(k)},v_i)+
        z\sin\angle(\hat{v}_{i}^{(k)},v_i),
    \end{equation*}
    where $z$ is orthogonal to $v_i$ with $\|z\|=1$. Abbreviate $\angle(\hat{v}_{i}^{(k)},v_i)$ as $\phi_i$. Then
    \begin{equation}\label{uhat}
        \hat\sigma_{i}^{(k)}\hat{u}_{i}^{(k)}=
        A\hat{v}_{i}^{(k)}=A(v_i\cos\phi_i+z\sin\phi_i)
        =\sigma_iu_i\cos\phi_i+Az\sin\phi_i.
    \end{equation}
    Since
    \begin{equation*}
        u_i^TAz=z^TA^Tu_i=\sigma_iz^Tv_i=0,
    \end{equation*}
    it follows from \eqref{uhat} that
    \begin{equation*}
        \sin\angle(\hat{u}_{i}^{(k)},u_i)  =  \frac{\|Az\|}{\hat\sigma_{i}^{(k)}}\sin\phi_i \leq \frac{\|A\|}{\hat\sigma_{i}^{(k)}}\sin\phi_i,
    \end{equation*}
    which proves \eqref{sineu}. \qed
\end{proof}

This theorem indicates that, provided that $\delta_i^{(k)}$ defined by
\eqref{gap} is uniformly bounded from below with respect to iteration
$k$, the left and right Ritz vectors
$\hat{u}_i^{(k)}$ and $\hat{v}_i^{(k)}$ converge at least with the linear
convergence factor $\frac{\gamma_{p+1}}{\gamma_i}$
but the Ritz value $\hat{\sigma}_i^{(k)}$ converges at least with the factor
$(\frac{\gamma_{p+1}}{\gamma_i})^2$.
This indicates that the errors of the Ritz values are roughly the squares
of those of the corresponding left and right Ritz vectors.

\begin{remark}
    The slowest convergence factor $\frac{\gamma_{p+1}}{\gamma_{n_{sv}}}$ is
    affected by the series degree $d$ and the subspace dimension $p$.
    Increasing $d$ or $p$ will make this factor smaller, but will consume more
    computational cost in one iteration (cf. \Cref{tab:Computational cost}).
    For a modestly sized $d$, increasing $p$ will reduce the number of
    iterations;
    for $d$ very large, increasing $p$ does not
    reduce the number of iterations essentially
    since, for a very good approximate spectral projector $P$,
    the solver will converge in very few iterations.
    Numerical experiments in \Cref{sec: experiments} will illustrate
    that choosing $d$ as \eqref{dchoice} with $D\in [2,10]$ and $p$ as
    \eqref{pchoice} with $\mu \in [1.1, 1.5]$ is reliable and works well.
%    Moreover, we will suggest a tradeoff between $D$ and $\mu$.
\end{remark}

\section{Numerical experiments}\label{sec: experiments}
We now report numerical experiments, and
provide a detailed numerical justification of \Cref{alg:subspace dimension1}
and \Cref{alg:Crossproduct-PSVD}, the theoretical results and remarks. The
test matrices are from \cite{davis2011university}, and we list some of their
basic properties and the interval $[a,b]$ of interest
in \Cref{tab:Properties of test matrices}. As we see,
the matrices $A$ range from
rank deficient to well conditioned, and the locations of intervals and the
widths relative to the whole singular spectra differ considerably. We will
also find that the numbers $n_{sv}$'s of the desired singular triplets
differ greatly too. Therefore, our concerning SVD problems
are representative in applications, implying that our test results and assertions are of generality.

In the experiments, an approximate singular triplet $(\hat{\sigma}, \hat{u}, \hat{v})$
is claimed to have converged if the residual norm satisfies
\begin{equation}\label{stopcriterion}
    \|r(\hat{\sigma}, \hat{u}, \hat{v})\| \leq \|A\| \cdot tol.
\end{equation}
We will use $tol=1e-8$ and $1e-12$ to test first ten examples and $tol=1e-13$ to test the last example.

All the numerical experiments were performed on an Intel Core i7-9700,
CPU 3.0GHz, 8GB RAM using MATLAB R2022a with the machine precision
$\epsilon_{{\rm mach}}=2.22e-16$  under the Microsoft Windows 10 64-bit system.
To make a fair comparison, for each test problem and given the subspace dimension $p$, we used the same starting $n\times p$ orthonormal
$\hat{V}^{(0)}$ in all the algorithms, which is obtained by the thin
QR decomposition of a random matrix generated in
a normal distribution.

\begin{table}[h]
    \caption{Properties of test matrices, where the
        $nnz(A)$ is the number of nonzero
        entries in $A$, and the largest and smallest
        singular values $\|A\|$ and
        $\sigma_{\min}(A)$ of $A$ are from \cite{davis2011university}.}
        \label{tab:Properties of test matrices}
    \centering
        \resizebox*{\textwidth}{!}{
            \begin{tabular}{|c|c|c|c|c|c|c|}
                \hline
                Matrix $A$ & $m$    & $n$   & $nnz(A)$ & $\|A\|$ & $\sigma_{\min}(A)$ & $[a,b]$ \\
                \hline
                GL7d12          & 8899   & 1019  & 37519    & 14.4 & 0                  & $[11, 12]$           \\
                plat1919        & 1919   & 1919  & 32399    & 2.93 & 0                  & $[2.1, 2.5]$         \\
                flower\_5\_4    & 5226 & 14721 & 43942  & 5.53 & $3.70e-1$         & $[4.1,4.3]$ \\
                fv1             & 9604   & 9604  & 85264    & 4.52 & $5.12e-1$           & $[3.1, 3.15]$         \\
                3elt\_dual      & 9000  & 9000    & 26556   & 3.00 & $6.31e-13$          & $[1.5, 1.6]$ \\
                rel8            & 345688 & 12347 & 821839   & 18.3 & 0                  & $[13, 14]$           \\
                crack\_dual     & 20141 & 20141  & 60086    & 3.00 & $1.73e-4$           & $[1, 1.1]$ \\
                nopoly          & 10774 & 10774  & 	70842  & 23.3 & $1.91e-15$          & $[12, 12.5]$ \\
                barth5          & 15606 & 15606  & 61484    & 4.23 & $7.22e-11$         & $[1.5, 1.6]$ \\
                L-9             & 17983 & 17983     & 71192 & 4.00  & 0                 & $[1.2 ,1.3]$ \\
                shuttle\_eddy   & 10429 & 10429  & 103599   & 16.2  & 0                 & $[7, 7.01]$ \\
                \hline
            \end{tabular}
        }
\end{table}

\subsection{Estimations of the number of desired singular values}\label{sub6.1}
We first justify that our
estimates for $n_{sv}$'s are reliable.
The exact singular values and $n_{sv}$'s are from \cite{davis2011university}.
For each test problem,
we take the polynomial degree $d$ in \eqref{dchoice} using $D=2,4,8$,
compute $H_M$ for two modestly sized $M=20,30$, and list them
in \Cref{tab:number estimation}.
We see that, for each problem, all the $H_{M}$ are accurate estimates
for $n_{sv}$, and they remain almost unchanged.
These results demonstrate that our selection strategy $D\in [2,10], M\in [20,30]$
is reliable. We suggest to use the smaller $M=20$ and the smallest $D=2$,
which cost the least.
Moreover, the numerical results indicates that the subspace dimension
$p=\lceil 1.1H_{M}\rceil \geq n_{sv}$,
which illustrates that our selection strategy \eqref{pchoice} with $\mu \geq 1.1$ is
reliable to guarantee that $p\geq n_{sv}$ in computations.
\begin{table}
    %\centering
    \caption{The exact $n_{sv}$ and their estimates $H_{M}$.}
    \label{tab:number estimation}
    \begin{tabular}{|c|c|c|c|c|c|}
        \hline
        \multirow{2}{*}{Matrix}     & \multirow{2}{*}{$n_{sv}$}  & \multirow{2}{*}{$M$} &  \multicolumn{3}{c|}{$H_{M}$} \\
                    \cline{4-6}
                                    &                            &                        &   $D=2$ & $D=4$ & $D=8$ \\
        \hline
        \multirow{2}{*}{GL7d12}    & \multirow{2}{*}{17}     & $20$   & 18.2    & 18.1  & 17.5        \\
                                    &                       & $30$   & 16.9    & 17.6  & 18.5            \\
        \hline
        \multirow{2}{*}{plat1919}  & \multirow{2}{*}{8}      & $20$   & 7.2    & 7.8  & 8.0        \\
                                    &                       & $30$    & 9.2    & 8.4  & 8.4            \\
        \hline
        \multirow{2}{*}{flower\_5\_4}     & \multirow{2}{*}{137}    & $20$   & 129.3  & 127.3  &  131.4       \\
                                    &                               & $30$   & 131.0  & 133.4  &  135.4           \\
        \hline
        \multirow{2}{*}{fv1}        & \multirow{2}{*}{89}     & $20$   & 93.4    & 93.8  & 92.1        \\
                                    &                         & $30$   & 90.8    & 91.8  & 89.4            \\
        \hline
        \multirow{2}{*}{3elt\_dual}     & \multirow{2}{*}{368}    & $20$   & 360.4  & 354.3  &  374.5       \\
                                    &                             & $30$   & 368.4  & 370.7  &  370.1           \\
        \hline
        \multirow{2}{*}{rel8}   & \multirow{2}{*}{13}     & $20$   & 11.8    & 13.5  & 12.7        \\
                                &                         & $30$   & 14.1    & 11.8  & 12.7           \\
        \hline
        \multirow{2}{*}{crack\_dual}     & \multirow{2}{*}{330}    & $20$   & 333.2  & 331.0 &  329.0       \\
                                    &                             & $30$   & 335.7  & 333.8  &  330.6          \\
        \hline
        \multirow{2}{*}{nopoly}     & \multirow{2}{*}{340}    & $20$   & 335.3  & 336.1 & 347.3        \\
                                    &                             & $30$   & 345.2  & 337.7  & 337.9           \\
        \hline
        \multirow{2}{*}{barth5}     & \multirow{2}{*}{384}    & $20$   & 373.7  & 382.0 & 372.1        \\
                                    &                             & $30$   & 388.0  & 382.6  & 380.9           \\
        \hline
        \multirow{2}{*}{L-9}     & \multirow{2}{*}{477}    & $20$   &  486.2 & 483.3 &  480.9       \\
                                    &                             & $30$   & 479.8  & 484.6  &  481.4          \\
        \hline
        \multirow{2}{*}{shuttle\_eddy}     & \multirow{2}{*}{6}    & $20$   & 5.6  & 5.7 & 6.4        \\
                                    &                             & $30$   & 6.7  & 6.1 & 7.3           \\
        \hline
    \end{tabular}
\end{table}

\subsection{The case that the subspace dimension is smaller than the number of desired singular values}
Our theoretical results and analysis imply that \Cref{alg:Crossproduct-PSVD} with $p < n_{sv}$
should not work generally since we may
have $\gamma_i, i=1,2,\dots, p+1$ are almost equal.
As a result, subspace iteration either converges extremely slowly or
fails to converge.
To numerically justify these predictions,
we take $d=d_0$, the smallest integer that satisfies \eqref{dsize},
and $p<n_{sv}$, apply \Cref{alg:Crossproduct-PSVD} to
the test matrices rel8 and plat1919, and investigate the convergence behavior.

For rel8, we first take $p=n_{sv}=13$.
We observe that \Cref{alg:Crossproduct-PSVD} converges very fast and
all the thirteen desired singular triplets have been found when $k=2$.
But for $p=12<n_{sv}$, the residual norms of some of the Ritz triplets do not
decrease from the first iteration to $k=10$; in fact, the smallest relative
residual norms among the twelve ones stabilize around $3.43e-5$.

We have observed similar phenomena for plat1919.
For $p=n_{sv}=8$, all the eight Ritz triplets have converged when $k=2$.
But for $p=6<n_{sv}$, the algorithm fails, and the residual norms of
some Ritz triplets almost stagnate
from the first iteration to $k=20$,
and the smallest relative residual norms stabilize around $9.10e-3$.
\Cref{fig:notconverge} depicts the convergence processes of
the smallest relative residual norms for re18 and plat1919,
where the residual norms stagnate from the first iteration
onwards. Therefore, to make the algorithm work, one must take $p\geq n_{sv}$.

\begin{figure}[tbhp]
    \centering
    \subfloat[rel8]{\includegraphics[scale=0.4]{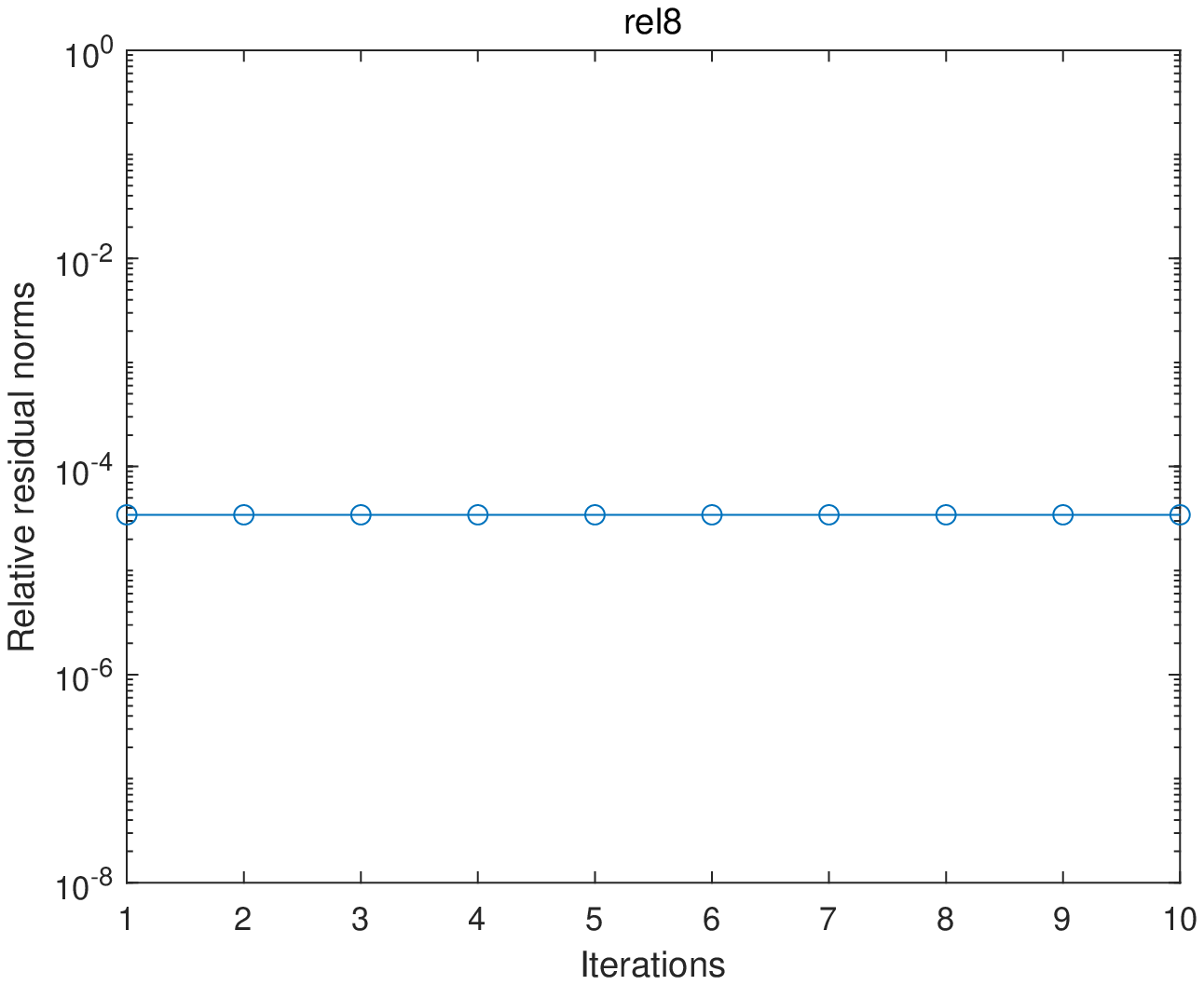}}
    \subfloat[plat1919]{\includegraphics[scale=0.4]{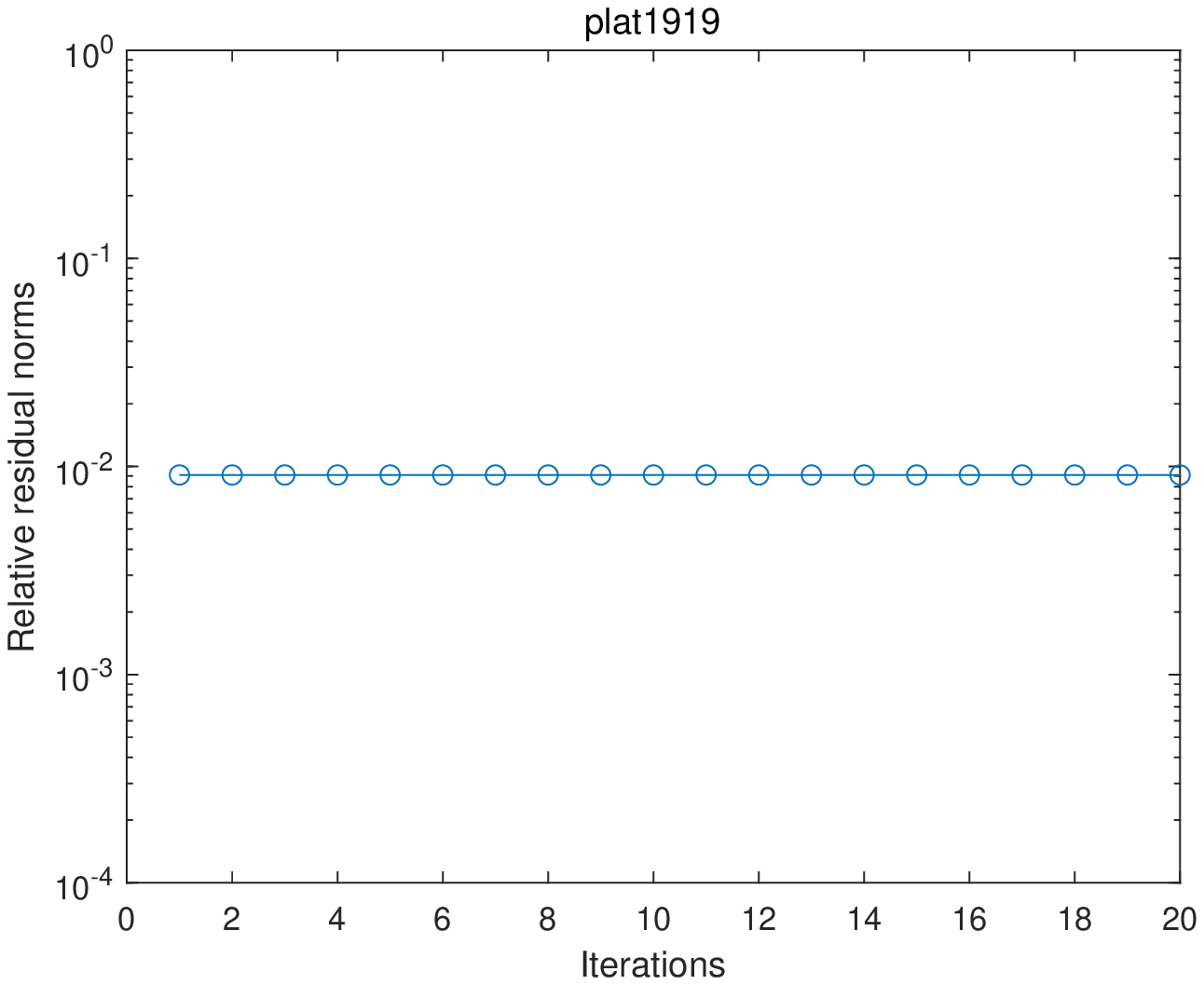}}
    \caption{Convergence processes when $p<n_{sv}$.}
    \label{fig:notconverge}
\end{figure}

Very importantly, our analysis and numerical
justification enable us to detect if $p\geq n_{sv}$ is met:
for a reasonably big $d$, if the algorithm converges extremely
slowly, then it is very possible that $p<n_{sv}$; we must stop the algorithm,
choose a bigger $p\geq n_{sv}$ to ensure the convergence,
and find the $n_{sv}$ desired singular triplets.

\subsection{Semi-definiteness of the approximate spectral projector and its accuracy}

We have proved the eigenvalues
$\gamma_i\in [0,1]$ of the approximate spectral projector
$P$ in \Cref{sec: cross product method}.
We now confirm this property numerically and get more insights into
sizes of the $\gamma_i$.

For GL7d12, by \cite{davis2011university}, it is known that
the right-most and left-most singular values in the
interval $[11,12]$ are the 18-th largest one
and the 34-th largest one, respectively.
For flower\_5\_4, the right-most and left-most singular values
in the interval $[4.1,4.3]$ are the 214-th largest one and the 350-th
largest one, respectively.
The ends of these two intervals are not singular values of the matrices,
and the eigenvalues of the spectral projector $P_{S}$ are thus 1 and 0.
We choose $d$ in \eqref{dchoice} using $D=2$ and $4$,
compute the eigenvalues $\gamma_i,\ i=1,2,\ldots,n$ of $P$, and
depict the eigenvalues $\gamma_i$ of $P$ corresponding to
$\sigma_i\in [a,b]$ and some neighbors outside in \Cref{fig:eigenvalues}.
We record the key quantities $\|P_{S}-P\|$, $\gamma_{n_{sv}}$ and $\gamma_{n_{sv}+1}$,
the largest $\gamma_1$ and smallest $\gamma_n$, and
$\gamma_{p+1}$ for $p=\lceil \mu H_{M}\rceil $
by taking $\mu=1.1, 1.3, 1.5$, respectively, and list them
in \Cref{table:eigenvalues}.

\begin{figure}[tbhp]
    \centering
    \subfloat[GL7d12, $D=2$]{\includegraphics[scale=0.4]{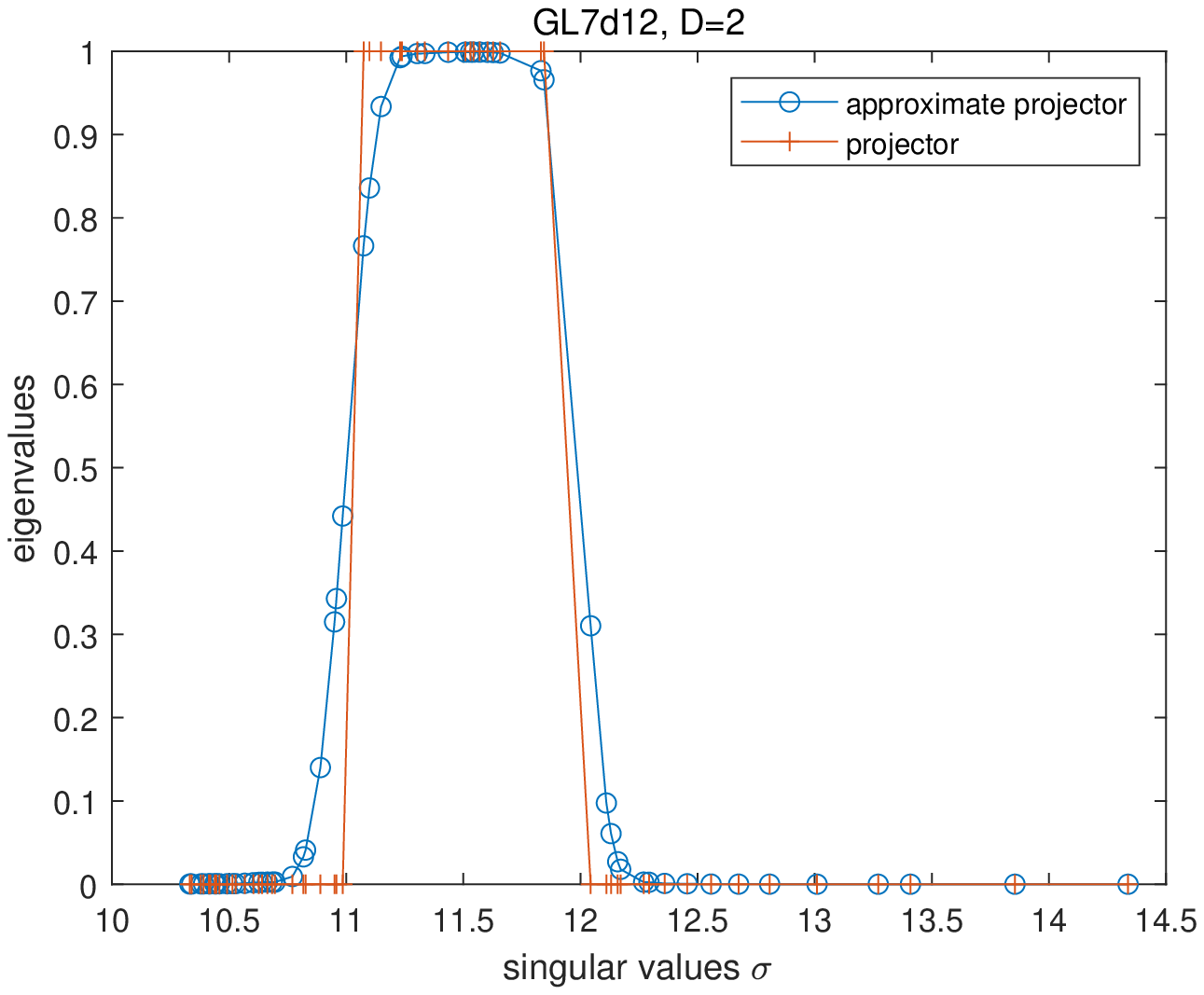}}
    \subfloat[GL7d12, $D=4$]{\includegraphics[scale=0.4]{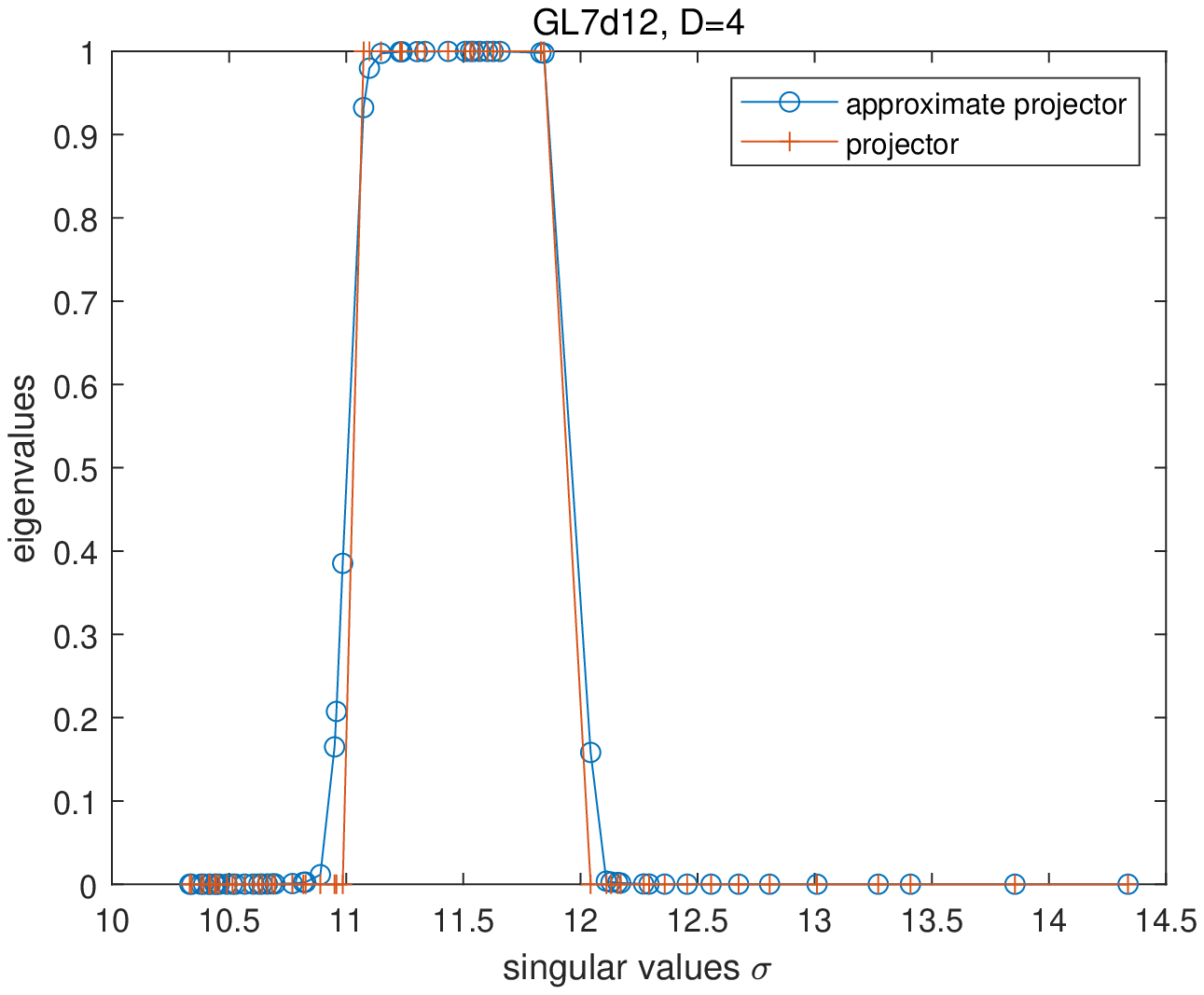}}

    \subfloat[flower\_5\_4, $D=2$]{\includegraphics[scale=0.4]{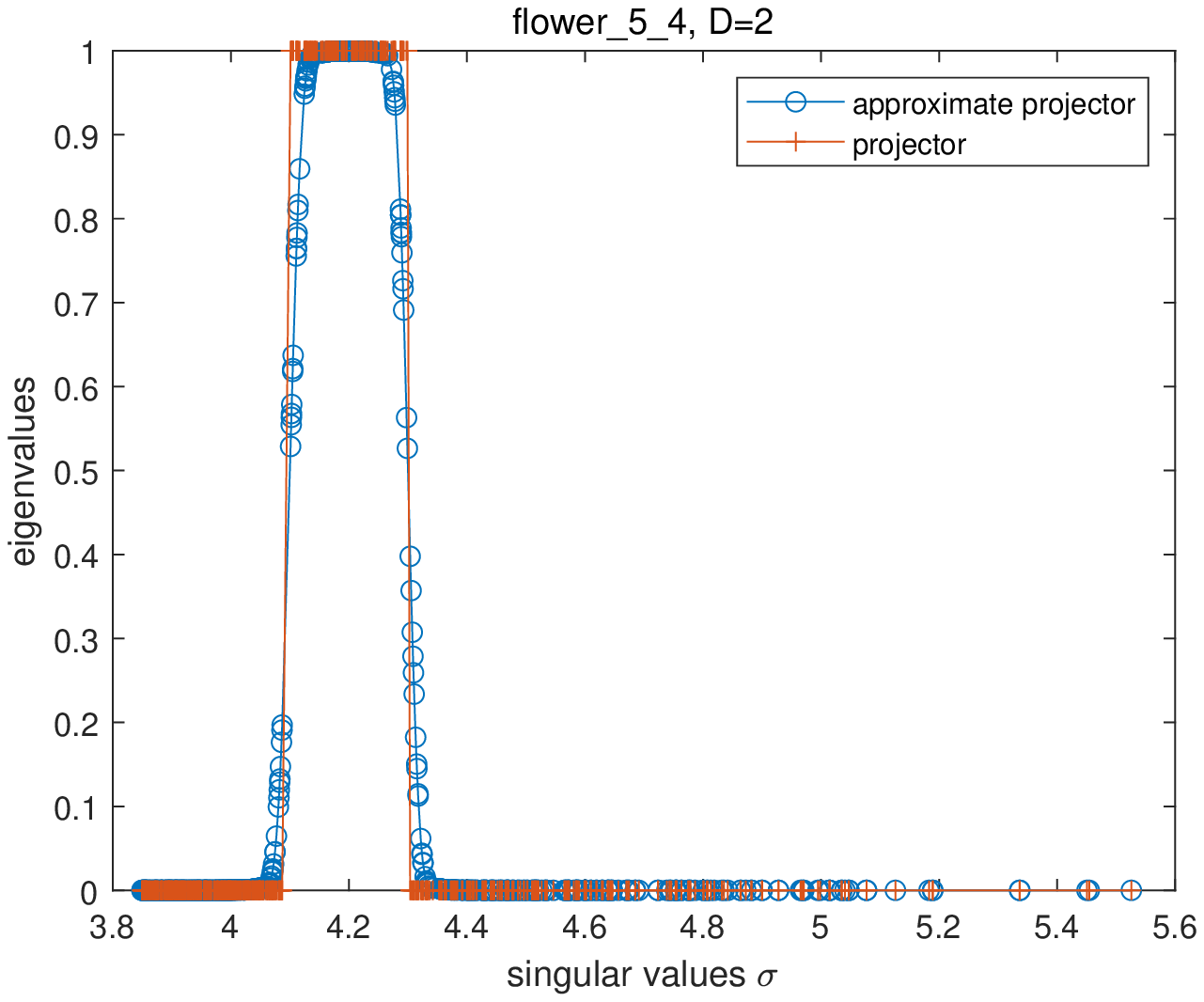}}
    \subfloat[flower\_5\_4, $D=4$]{\includegraphics[scale=0.4]{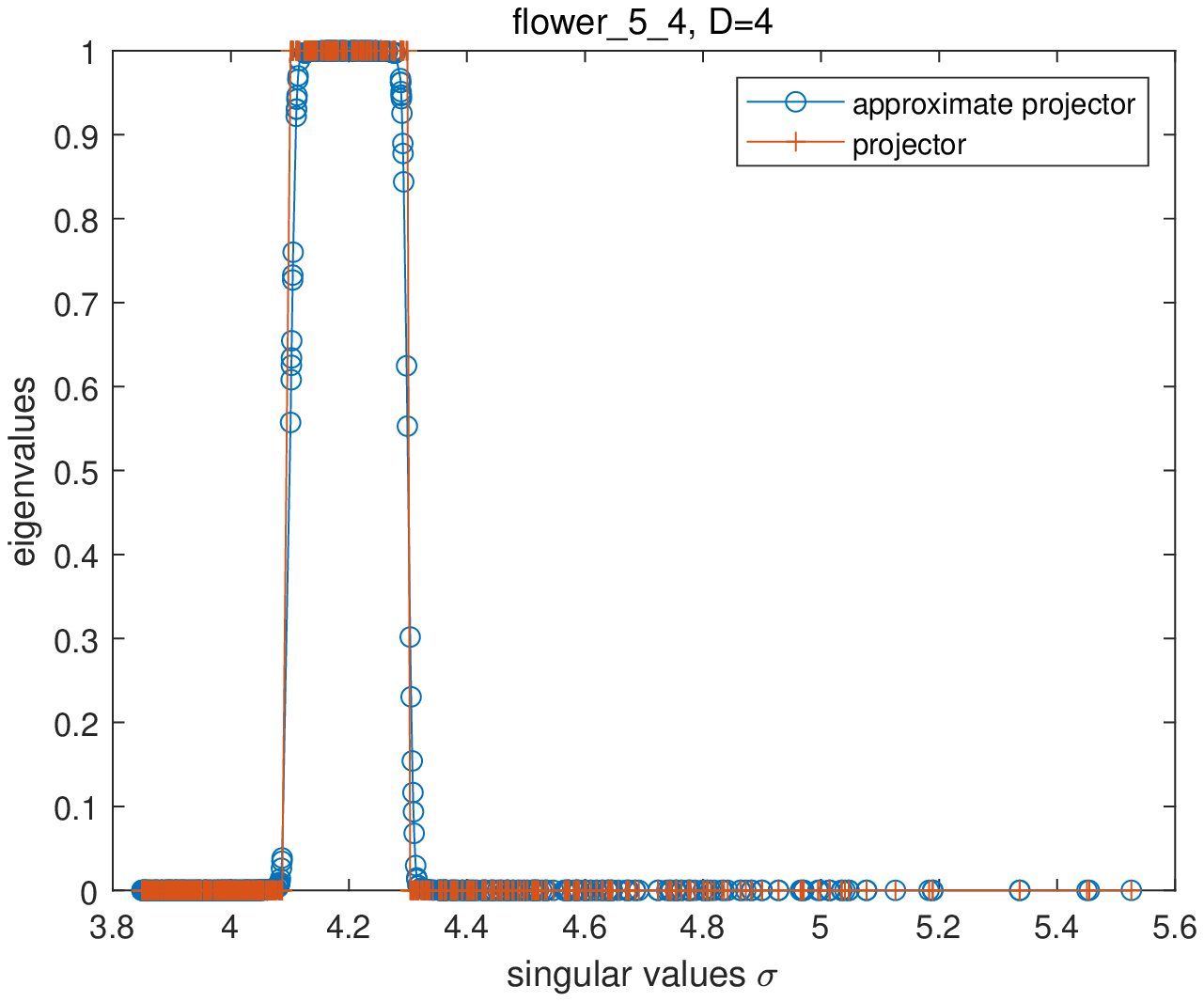}}
    \caption{The partial eigenvalues of $P$.}
    \label{fig:eigenvalues}
\end{figure}

\begin{table}[htbp]
    \caption{$\|P_S-P\|$ and some eigenvalues of $P$. The series degree $d$ for
    GL7d12 are $137$ and $276$, and the series degree $d$ for flower\_5\_4 are 365 and 732.}
    \label{table:eigenvalues}
    \centering
    \resizebox*{\textwidth}{!}
    {
    \begin{tabular}{|c|c|c|c|c|c|c|c|c|c|}
    \hline
    \multirow{2}{*}{Matrix} & \multirow{2}{*}{$D$} & \multirow{2}{*}{$\|P_{S}-P\|$} & \multirow{2}{*}{$\gamma_1$} & \multirow{2}{*}{$\gamma_{n_{sv}}$}& \multirow{2}{*}{$\gamma_{n_{sv}+1}$}  & \multicolumn{3}{c|}{$\gamma_{p+1}$} & \multirow{2}{*}{$\gamma_n$}                                                 \\
    \cline{7-9}
                                &            & & & &      & $\mu=1.1$  & $\mu=1.3$  & $\mu=1.5$ & \\
    \hline
    \multirow{2}{*}{GL7d12}    &   $2$    &0.4420 &  0.9990 & 0.7664 & 0.4420  & $3.15e-1$  & $9.75e-2$  & $2.70e-2$ & $1.63e-7$\\
                                &   $4$   &0.3852 &  0.9999 & 0.9323 & 0.3852   & $1.65e-2$  & $3.71e-3$  & $2.02e-3$ & $2.13e-8$\\
    \hline
    \multirow{2}{*}{flower\_5\_4}   &   $2$    & 0.4736 & 0.9996 & 0.5264 & 0.3978   & $1.45e-1$  & $7.92e-3$  & $1.95e-3$ & $5.28e-9$ \\
                                    &   $4$    & 0.4472 & 0.9999 & 0.5528 & 0.3017   & $1.30e-2$  & $6.84e-4$  & $1.93e-4$ & $6.72e-10$ \\
    \hline
    \end{tabular}
    }

\end{table}

Several comments are in order on the figure and the table.
First, for each matrix,
the two $P$ generated by the two $D$ are all SPSD since all the $\gamma_n>0$.
Second, the eigenvalues of each $P$ are indeed in $[0,1]$ since all the
$\gamma_1<1$ and are close to one; $\|P_S-P\|<\frac{1}{2}$, and
$\|P_S-P\|=1-\gamma_{n_{sv}}$ or $\gamma_{n_{sv}+1}$. Third, the
$\gamma_i$ decay to zero fast outside the given interval, and their
sizes indeed differ greatly as $i$ increases,
which justifies \Cref{rem:dchoice for nsv}. Fourth, the bigger $D$ is,
the larger $\gamma_{n_{sv}}$ but the smaller $\gamma_{n_{sv}+1}$ and $\gamma_p$
are, meaning that
the algorithm converges faster as $D$, i.e., the series degree $d$, increases.
Observe from \eqref{dchoice} that $d+2$ is exactly a multiple of $D$.
Insightfully, by a careful comparison,
we have found that, for $D=4$, the corresponding
$\gamma_{p+1}$ and $\gamma_n$ are approximately reduced by eight times, compared
to those for $D=2$. They indicate that these quantities are approximately
proportional to  $1/(d+2)^3$, and thus numerically justified \Cref{rem:error}.
Fifth, for each $D$, the slowest convergence factor $\frac{\gamma_{p+1}}{\gamma_{n_{sv}}}
<\frac{\gamma_{n_{sv}+1}}{\gamma_{n_{sv}}}$ considerably
as $\mu$, i.e., $p$, increases,
which shows that increasing $p$ can speed up the convergence of the CJ-FEAST
SVDsolver considerably. Sixth, all the
$\frac{\gamma_{p+1}}{\gamma_{n_{sv}}}<1$ considerably for the given
$\mu\in [1.1, 1.5]$ and $D=2, 4$, which implies that the algorithm converges quite quickly.
Visually, we plot the seven eigenvalues in
\Cref{table:eigenvalues} as \Cref{fig: some eigenvalues of P}, and show
how different they are for the two $D$. As is seen, the three $\gamma_{p+1}$
and $\gamma_n$ are reduced roughly one order from $D=2$ to $D=4$.

\begin{figure}[tbhp]
    \centering
    \subfloat[GL7d12, some eigenvalues of $P$]{\includegraphics[scale=0.4]{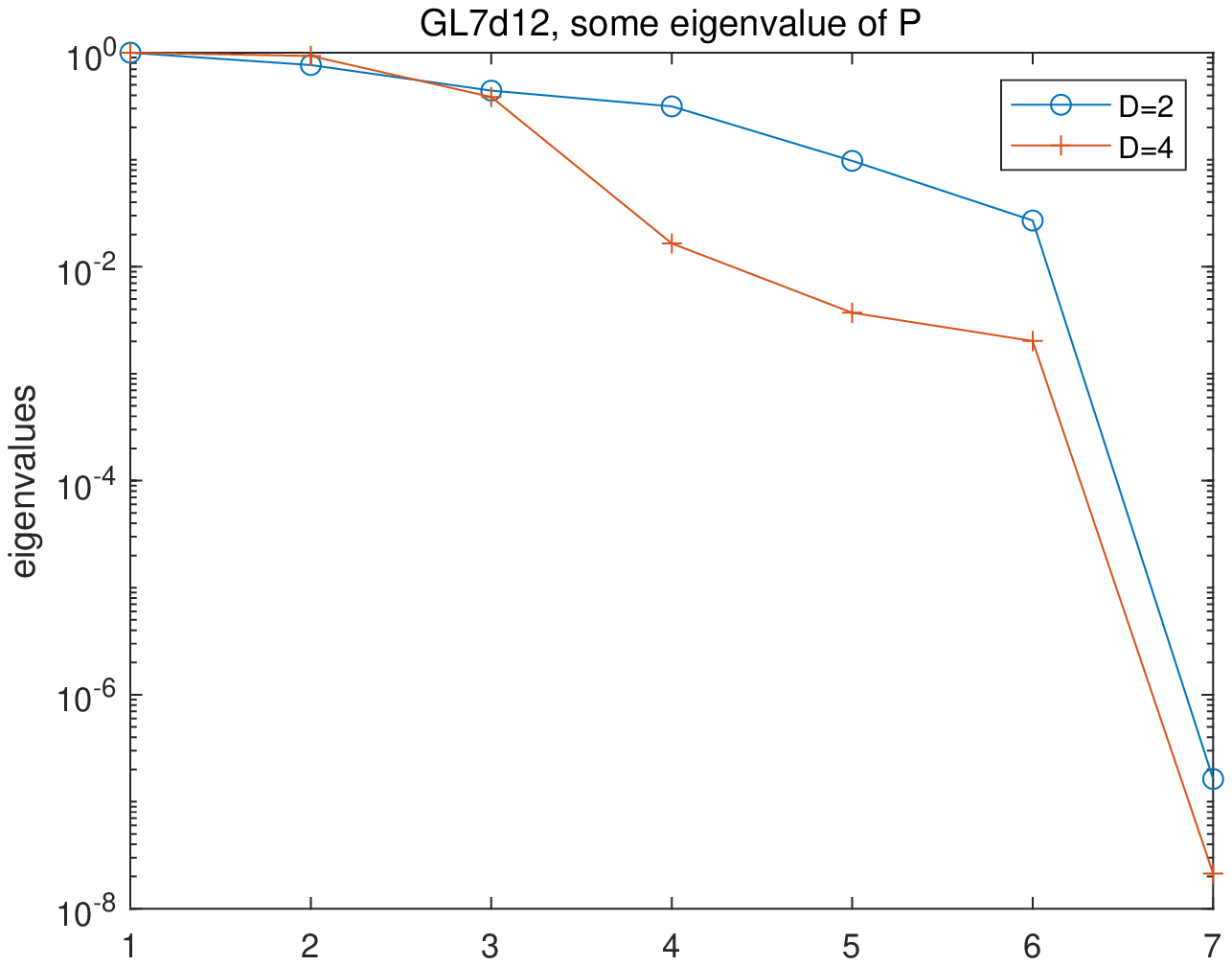}}
    \subfloat[flower\_5\_4, seven eigenvalue of $P$]{\includegraphics[scale=0.4]{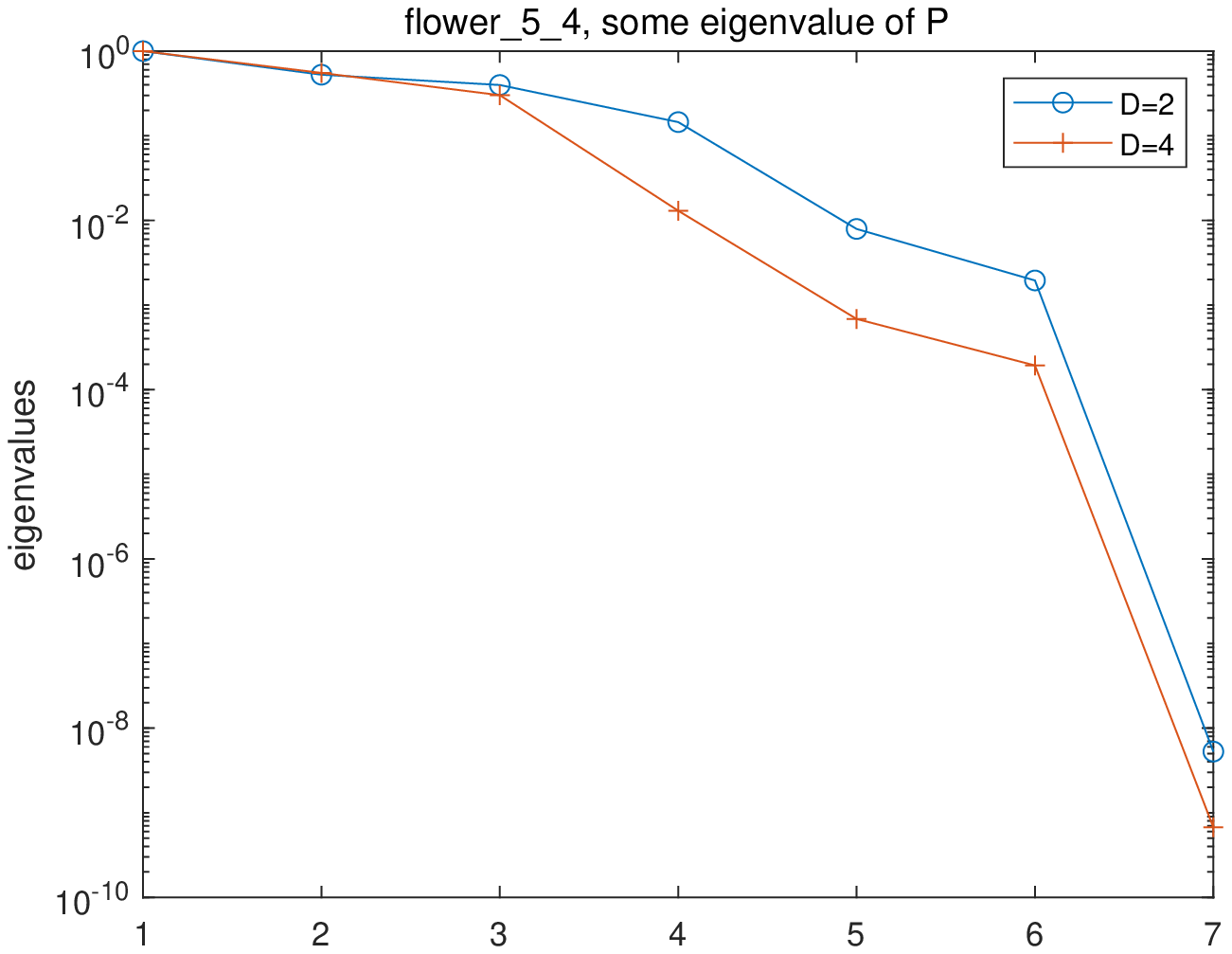}}
    \caption{Seven eigenvalues of $P$}
    \label{fig: some eigenvalues of P}
\end{figure}

The above results and analysis indicate that $\mu\in [1.1,1.5]$ for
a small $D$ are practical and work well.

\subsection{A comparison of CJ-FEAST SVDsolver and IFEAST}
In this subsection we numerically
compare \Cref{alg:Crossproduct-PSVD} with the contour integral-based
FEAST algorithm with inexact linear system solves, abbreviated as IFEAST
\cite{gavin2018ifeast,polizzi2020feast}, which can be directly
adapted to the SVD problem under consideration.
We use IFEAST to construct $P$ and then use \Cref{alg:subspace iteration}.
The unique fundamental difference between \Cref{alg:Crossproduct-PSVD} and
IFEAST is the construction way of $P$, and
all the other steps are the same.

IFEAST can use some flexible parameters  \cite[Section 3.1]{polizzi2020feast},
such as different contours and numerous numerical quadrature rules.
Here, for a given interval $[a,b]$ of interest,
we use the circle with the center $\frac{a^2+b^2}{2}$ and radius  $\frac{b^2-a^2}{2}$ as the contour.
We use the trapezoidal rule with eight nodes
and the Gauss--Legendre quadrature with sixteen nodes on
the circle, respectively, which are default parameters
in \cite[Section 3.1]{polizzi2020feast} and are also used in
\cite{tang2014feast} and \cite{guttel2015zolotarev}.
At each iteration, the resulting shifted linear systems are solved by BiCGstab,
as suggested in \cite{polizzi2020feast},
with increasing accuracy and the parameter $\alpha = 0.01$ \cite[Section 2]{gavin2018ifeast},
and the maximum iteration number is set to $n$, i.e., the problem size of
shifted linear systems.

Notice that $A^TA$ is real symmetric and the quadrature nodes are
symmetric with respect to the real axis, IFEAST only
needs to solve $p\times \frac{nnode}{2}$ linear
systems at each subspace iteration step,
where $nnode$ is the number of nodes on the circle. As we have
addressed, just as those shifted linear systems can be solved in parallel,
Step 3 of the CJ-FEAST SVDsolver
can be implemented in parallel too; that is, each of the $p$ matrix-vector
products is computed in a separate processor. More
precisely, one may solve the shifted linear systems in
$p\times \frac{nnode}{2}$ processors, while the $p$ MVs in CJ--FEAST SVDsolver
can be performed only in $p$ processors at each iteration.  As a result,
for a fair comparison, we only record the sequential MVs \cite{gavin2018ifeast},
which is the sum of the most MVs that BiCGstab uses for the one
among these linear systems at each subspace iteration step.
Notice that, for the shifted linear systems resulting from the matrices $A^TA$,
one iteration of BiCGstab costs four MVs with $A$ and $A^T$.
Keep in mind that if Step 3 of \Cref{alg:Crossproduct-PSVD} is
implemented in parallel then it consumes $2d$ sequential MVs for one subspace iteration step.
It is fair to use the sequential MVs to measure the overall efficiency of CJ-FEAST and IFEAST.

We take the same initial $\mathcal{V}^{(0)}$ and the same subspace
dimension $p$ as \eqref{pchoice} with $\mu = 1.2, 1.5$,
and $H_M$ is the closest one to $n_{sv}$ selected from \Cref{tab:number estimation}.
For \Cref{alg:Crossproduct-PSVD}, we choose the series degree $d$
using \eqref{dchoice} with $D=1, 2, 3$, respectively.
We record the sequential MVs and the number of iterations $k$ that
the norms of all the desired approximate singular triplets drop below a
prescribed tolerance $tol$, and denote them by SeqMVs($k$).
Moreover, we use the speedup ratio (SR) to compare the efficiency,
where SR is equal to the ratio of the SeqMVs of IFEAST over
the mean value of the three SeqMVs of \Cref{alg:Crossproduct-PSVD}.
Therefore, SR is the efficiency multiple of CJ-FEAST over IFEAST,
and SR $>1$ indicates that \Cref{alg:Crossproduct-PSVD} is more efficient;
otherwise, \Cref{alg:Crossproduct-PSVD} is less efficient.
\Cref{tab:seqMVs1e-8} and \Cref{tab:seqMVs1e-12} list the results obtained
for $tol=1e-8$ and $tol=1e-12$, respectively, where we have abbreviated
the trapezoidal rule and the Gauss--Legendre quadrature as T and G, respectively.

    \begin{table}[htbp]
        \caption{Computational results of IFEAST and \Cref{alg:Crossproduct-PSVD} with $tol=1e-8$.}
        \label{tab:seqMVs1e-8}
        \centering
        \resizebox*{\textwidth}{!}{
        \begin{tabular}{|c|c|c|c|c|c|c|c|c|}

        \hline
        \multirow{3}{*}{Matrix} & \multirow{3}{*}{$p$} & \multicolumn{5}{c|}{SeqMVs$(k)$}  & \multicolumn{2}{c|}{SR}       \\ \cline{3-9}
                          & &\multicolumn{2}{c|}{IFEAST} & \multicolumn{3}{c|}{\Cref{alg:Crossproduct-PSVD}}  & \multirow{2}{*}{T} & \multirow{2}{*}{G}      \\ \cline{3-7}
                          &  & T   &  G         & $D=1$ & $D=2$ & $D=3$  &  &   \\ \hline
        \multirow{2}{*}{GL7d12}   & 21          & 2906(11)  &  4418(4)    & 2448(18) & 2466(9) & 2484(6) & 1.2 & 1.8 \\
        &26           & 1962(8)  &  3630(4)              & 1904(14) & 1370(5) & 1656(4) & 1.2 & 2.2 \\ \hline
        \multirow{2}{*}{plat1919} & 10    &  410(7)      & 588(4)   & 756(14) & 672(6) & 680(4) & 0.6 & 0.8 \\
        &    12          &  398(7)  &   450(3)     & 756(14) & 672(6) & 510(3)  & 0.6 & 0.7 \\ \hline
        \multirow{2}{*}{flower\_5\_4} & 163   & 9794(13)    & 16320(4)   & 5824(16) & 4380(6) & 4392(4) & 2.0 & 3.3\\
        & 204  &    8202(7)       &  23336(4)   & 2548(7) & 2920(4) & 3294(3) & 2.8 & 8.0 \\ \hline
        \multirow{2}{*}{fv1} &  108   & 31176(8)        & 82318(4)  & 12120(6) & 12132(3) & 18198(3) & 2.2 & 5.8 \\
        &     135         &    21954(6) &  58902(3)             & 8080(4) & 12132(3) & 12132(2) & 2.0 & 5.5 \\ \hline
        \multirow{2}{*}{3elt\_dual}  & 443   &  20670(16)        & 37450(4)   & 10620(18) & 9472(8) & 8890(5) & 2.1 & 3.6 \\
        & 553             & 9786(7)  & 30962(4)              & 4130(7) & 4736(4) & 5334(3) & 2.1 & 6.5 \\ \hline
        \multirow{2}{*}{rel8} & 16         & 3910(16) & 3592(4)   & 6160(28) & 4884(11) & 4008(6) & 0.8 & 0.7 \\
        &  20        & 2458(10) &  2612(3)       & 3300(15) & 2220(5) & 2672(4) & 0.9 & 1.0 \\ \hline
        \multirow{2}{*}{crack\_dual}  & 397   &  84646(25)        &  61700(5) & 19314(29) & 17368(13) & 16032(8) & 4.8 & 3.5\\
        & 496            & 21170(13)  & 52332(5)        & 11332(17) & 8016(6) & 8016(4) & 2.3 & 5.7 \\ \hline
        \multirow{2}{*}{nopoly}  & 406   & 51228(15)         & 116320(5)  & 19008(18) & 14798(7) & 12696(4) & 3.3 & 7.5 \\
        &  507           & 20912(7)  & 66756(4)    & 5280(5) & 8456(4) & 9522(3) & 2.7 & 8.6 \\ \hline
        \multirow{2}{*}{barth5}  &  460  &  101058(23)   & 148510(5) & 18828(18) & 12564(6) & 12576(4)  & 6.9 & 10.1\\
        & 574            & 31912(8)  & 88370(4)    & 5230(5) & 8376(4) & 9432(3) & 4.2 & 11.5\\ \hline
        \multirow{2}{*}{L-9}  & 576   & 68666(16)  & 155076(5) & 14970(15) & 11988(6) & 11992(4) & 5.3 & 11.9 \\
        & 720            & 40714(9) & 91302(4)    & 4990(5) & 7992(4) & 8994(3) & 5.6 & 12.5 \\ \hline
        \end{tabular}
            }
    \end{table}

    \begin{table}[htbp]
        \caption{Computational results of IFEAST and \Cref{alg:Crossproduct-PSVD} with $tol=1e-12$.}
        \label{tab:seqMVs1e-12}
        \centering
        \resizebox*{\textwidth}{!}{
        \begin{tabular}{|c|c|c|c|c|c|c|c|c|}
        \hline
        \multirow{3}{*}{Matrix} & \multirow{3}{*}{$p$} & \multicolumn{5}{c|}{SeqMVs$(k)$}   & \multicolumn{2}{c|}{SR}        \\ \cline{3-9}
                          & &\multicolumn{2}{c|}{IFEAST} & \multicolumn{3}{c|}{\Cref{alg:Crossproduct-PSVD}}  & \multirow{2}{*}{T} & \multirow{2}{*}{G}     \\ \cline{3-7}
                          &  & T   &  G         &$ D=1$ & $D=2$ & $D=3$ & & \\ \hline
        \multirow{2}{*}{GL7d12}   & 21     & 6576(17)  &  7052(5)   & 4352(32) & 4110(15) & 3726(9)  & 1.6 & 1.7 \\
        &  26  &  5312(14)     & 7598(7)             & 2992(22) & 2466(9) & 2070(5)  & 2.1 & 3.0 \\ \hline
        \multirow{2}{*}{plat1919} &  10            & 666(12)   &  674(5)   & 1188(22) & 1008(9) & 850(5)  & 0.7 & 0.7 \\
        & 12    &  522(10)      & 714(5)                  & 1080(20) & 896(8) & 850(5)  & 0.6 & 0.8 \\ \hline
        \multirow{2}{*}{flower\_5\_4} & 163  &  57162(20)         & 35764(5)  & 9464(26) & 7300(10) & 5490(5)  & 7.7 & 4.8 \\
        &  204  &  14568(9)        & 26788(5)  & 3276(9) & 3650(5) & 5496(5)  & 3.5 & 6.5 \\ \hline
        \multirow{2}{*}{fv1} & 108             & 193024(14)    & 195936(7)              & 16160(8) & 20220(5) & 24264(4)  & 9.5 & 9.7 \\
        & 135    & 76036(7)        &   95756(4) & 10100(5) & 16176(4) & 18198(3)  & 5.1 & 6.5 \\ \hline
        \multirow{2}{*}{3elt\_dual} &  443            & 174642(24)   & 109562(6)    & 18880(32) & 13024(11) & 10668(6)  & 12.3 & 7.7 \\
        &  553  &  47492(11)        &   102634(6)   & 5310(9) & 5920(5) & 8890(5)  & 7.1 & 15.3 \\ \hline
        \multirow{2}{*}{rel8} &  16        & 7998(22) & 5818(5)        & 8140(37) & 6660(15) & 5344(8)  & 1.2 & 0.9\\
                            & 20  &  5304(14)     &  7630(5)         & 4620(21) & 3552(8) & 3340(5)   & 1.4 & 2.0 \\ \hline
        \multirow{2}{*}{crack\_dual} & 397            & 371568(38)  & 225696(8) & 31968(48) & 25384(19) & 22044(11)  & 14.0 & 8.5 \\
        &  496  &  192250(21)        & 178956(7)      & 13986(21) & 10688(8) & 12024(6)  & 15.7 & 14.6 \\ \hline
        \multirow{2}{*}{nopoly}  & 406   & 314438(26)    & 125280(6)  & 26400(25) & 19026(9) & 19044(6)  & 14.6 & 5.8 \\
        &  507           & 82246(10)  & 75062(5)        & 6336(6) & 10570(5) & 12696(4)  & 8.3 & 7.7 \\ \hline
        \multirow{2}{*}{barth5}  &  460  & 889820(39) & 175112(6) & 27196(26) & 20940(10) & 18864(6)  & 39.8 & 7.8 \\
        & 574            & 150860(10)  & 113838(5)    & 8368(8) & 10470(5) & 12576(4)  & 14.4 & 10.9 \\ \hline
        \multirow{2}{*}{L-9}  & 576   & 471596(25) & 193192(6) & 23952(24) & 19980(10) & 17988(6)  & 22.8 & 9.4 \\
        & 720            & 220778(12) & 145958(6)    & 6986(7) & 9990(5) & 11992(4)  & 22.9 & 15.1 \\ \hline
        \end{tabular}
        }
    \end{table}

Let us analyze \Cref{tab:seqMVs1e-8} and \Cref{tab:seqMVs1e-12}.
For GL7d12, plat1919 and rel8, since
the intervals of interest are close to the right end of the singular spectra,
the shifted linear systems involved in IFEAST are not very
indefinite by noticing
that most of the eigenvalues of the coefficient matrices are in the left
half plane and only a handful of them are in the right
half plane. It is known that, for such linear systems,
Krylov iterative solvers such as BiCGstab and GMRES may converge relatively faster.
As the SR columns of tables indicate, the SeqMVs consumed
by IFEAST and CJ-FEAST are comparable, meaning that the two
algorithms are almost equally efficient and there is no obvious winner.
But for the other seven test problems,
the intervals of interest are truly inside the singular spectra, that is,
the desired singular values are some relatively interior ones,
so that the linear systems in the IFEAST may be highly indefinite,
which make Krylov iterative solvers possibly converge very slowly.
For these SVD problems, we see from the SR columns of tables that
CJ-FEAST is a few and often tens times more
efficient than IFEAST, very substantial improvements.

We have more findings.
When the stopping tolerance
$tol$ changes from $1e-8$ to $1e-12$, although
the outer iterations needed increase regularly for each problem and given
parameters, the SeqMVs($k$) consumed by IFEAST may increase dramatically, which are especially
true when the intervals of interest are inside the singular spectra.
By inspecting the convergence processes of BiCGstab for solving shifted
linear systems at each outer iteration, we have observed that it
became much harder for BiCGstab to reduce the residual norms of shifted systems
as outer iterations proceed and approximate
singular triplets converge. In fact, we have found that once outer residual norms
are around $1e-11$, BiCGstab often consumed considerably many iterations to meet
the desired stopping criterion in subsequent outer iterations.
In contrast, CJ-FEAST always converges linearly and regularly, and
the SeqMVs used by it thus increase regularly from $tol=1e-8$ to $tol=1e-12$. This can be seen
from the SR columns of the tables, where CJ-FEAST is more advantageous to
the two contour integral-based IFEAST solvers for $tol=1e-12$.

Finally, we test the CJ-FEAST SVDsolver and IFEAST on the problem
shuttle\_eddy with $tol=1e-13$, which, though smaller, is considerably
bigger than $\mathcal{O}(\epsilon_{\rm mach})$. We take
$p = \lceil 1.2H_M\rceil = \lceil 1.2 \times 6.1\rceil = 8$,
where $H_M = 6.1$ is the closest to $n_{sv}=6$ selected
from \Cref{tab:number estimation}.
For IFEAST, we plot the convergence processes of the biggest relative
residual norms among the six ones in \Cref{fig: shuttle convergence process}.
For CJ--FEAST with $D = 1$ and 2, which corresponds to $d=64914$ and $129830$,
the convergence processes of the six Ritz approximations are similar,
and we plot the residual norms of one Ritz approximation with $D=1$ and $D=2$
in \Cref{fig: shuttle convergence process}, respectively.
We also take a closer look at the convergence behavior  of IFEAST
and plot \Cref{fig: shuttle closer look} after the residual norms drop below
$1e-11$, which exhibits the subsequent convergence
process more clearly and visually.

Several comments are made.
First, it is observed from \Cref{fig: shuttle convergence process} that both
IFEAST and CJ--FEAST converge quite fast until the residual norm decreases to
$1e-11$. After that,
IFEAST with the trapezoidal rule starts to stabilize
above $tol=1e-13$ in subsequent iterations
and IFEAST with the Gauss--Legendre quadrature succeeds but
converges irregularly,
while CJ--FEAST with $D=1,2$ performs regularly and
the residual norms drops below $tol=1e-13$ at iterations
$k=3$ and $2$, respectively.
Second, if $tol=1e-11$ then all the residual norms
of six desired triplets computed by IFEAST with the trapezoidal rule and
Gauss--Legendre quadrature drop below $1e-11$ at iterations $k=9,8$,
respectively, and the SeqMVs are 361338 and 333572;
the SeqMVs$(k)$ consumed by CJ--FEAST with $D=1$ and 2
are $389484(3)$ and $519320(2)$, respectively. Therefore,
CJ-FEAST with $D=1$ is as efficient as IFEAST if $tol=1e-11$.
Third, \Cref{fig: shuttle closer look} shows that IFEAST with the
Gauss--Legendre quadrature behaves irregularly but the residual norm
ultimately drops below the prescribed $tol=1e-13$ at $k=31$,
while the residual norms obtained by IFEAST with the trapezoidal rule decrease
faster and more regularly but almost stagnate from
$k=15$ upwards with the residual norms bigger than $tol$.
Fourth, the residual norms computed by CJ--FEAST with $D = 1$ and 2
further decrease and achieve the prescribed tolerance very quickly.
As a matter of
fact, the residual norms of six Ritz approximations computed by CJ--FEAST
with $D=1$ are already $4.64e-15, 5.61e-15, 4.94e-15, 5.91e-15, 5.87e-15,
9.25e-15$ and with $D=2$ are $6.62e-15, 6.07e-15, 6.00e-15, 6.59e-15, 6.25e-15,
6.36e-15$, respectively. All of them are $\mathcal{O}(\epsilon_{\rm mach})$.
Therefore, for this problem, \Cref{alg:Crossproduct-PSVD} is more robust
than IFEAST when higher accuracy is required. More generally, we have found
that CJ-FEAST works well
for a prescribed tolerance $tol=\mathcal{O}(\epsilon_{\rm mach})$, but IFEAST
may fail to converge for $tol=1e-13$ or smaller but no less than
$\mathcal{O}(\epsilon_{\rm mach})$ for some problems, due to
the solutions of shifted linear systems in finite precision.

\begin{figure}[tbhp]
    \centering
    \subfloat[convergence process]{\label{fig: shuttle convergence process}\includegraphics[scale=0.4]{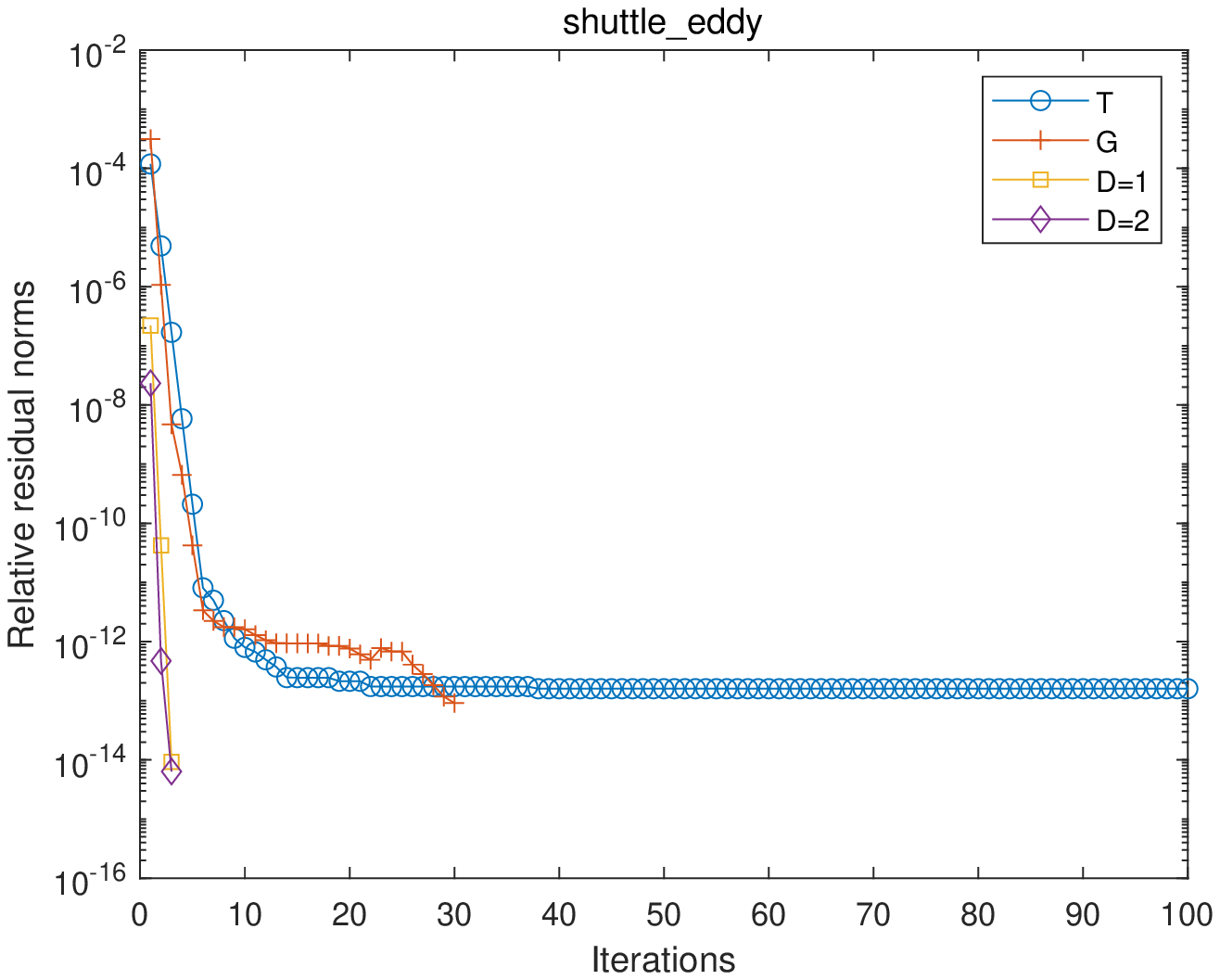}}
    \subfloat[A closer look]{\label{fig: shuttle closer look}\includegraphics[scale=0.4]{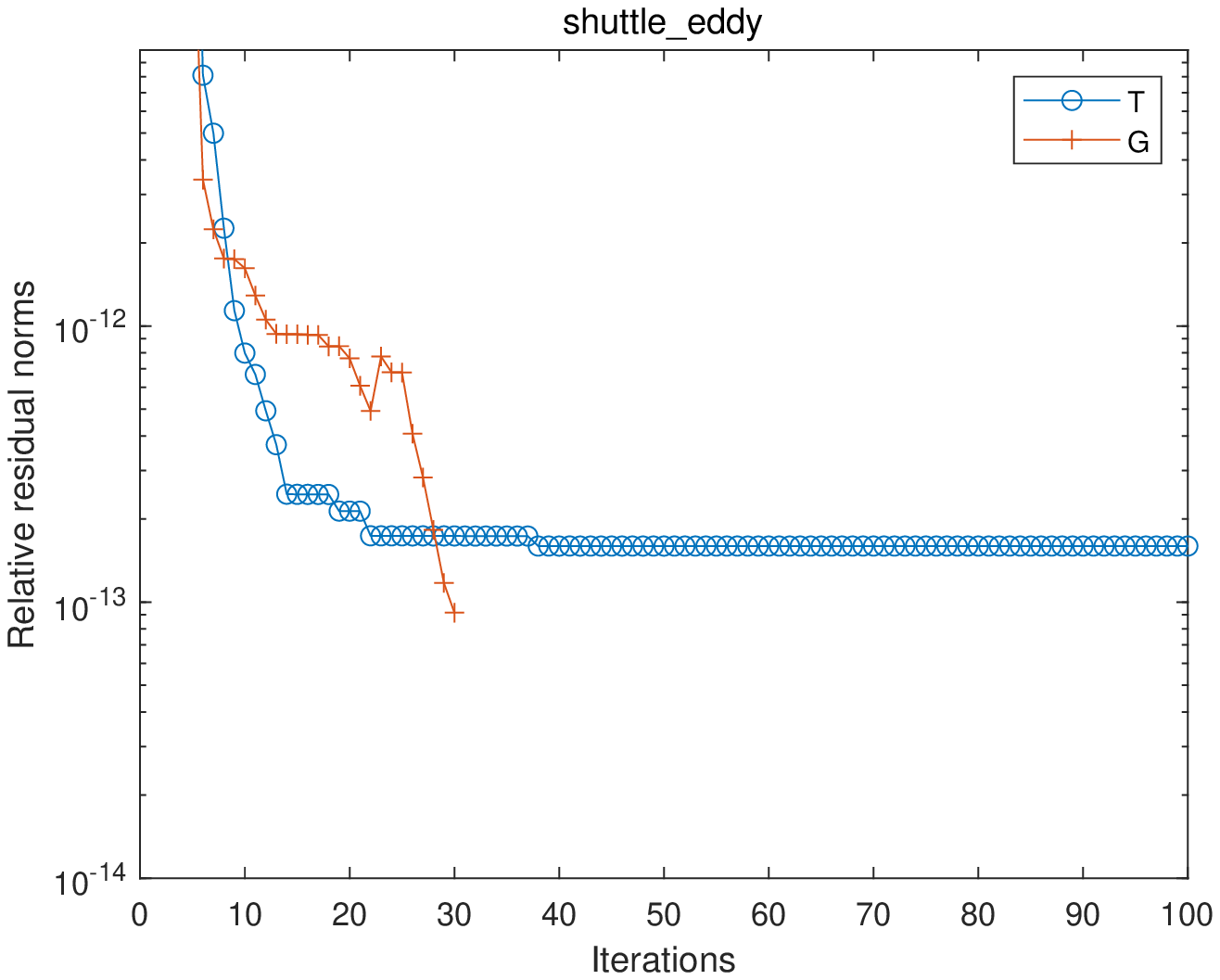}}
    \caption{Convergence process of shuttle\_eddy with $tol=1e-13$.}
    \label{fig:shuttle}
\end{figure}

\section{Conclusions}\label{sec:end}

We have considered the problem
of approximating the step function $h(x)$ in \eqref{hdef}
by the Chebyshev--Jackson polynomial series,
proved its pointwise convergence to $h(x)$, and derived quantitative pointwise
error bounds. Making use of these results,
we have established quantitative accuracy estimates for the
approximate spectral projector constructed by the series as an
approximation to the spectral projector $P_{S}$ of $A^TA$ associated with all
the singular values $\sigma\in [a,b]$. We have also proved that
the approximate spectral projector constructed by the Chebyshev--Jackson
series is unconditionally SPSD,
which enables us to reliably estimate the number $n_{sv}$ of desired singular
triplets and propose a robust selection strategy to ensure that the subspace
dimension $p\geq n_{sv}$. Based on these results, we have developed
the CJ-FEAST SVDsolver for the computation of the singular triplets of $A$
with $\sigma\in [a,b]$.
We have analyzed the convergence of the algorithm, and
proved how the subspaces constructed converge to the desired right
singular subspace and how each of the Ritz approximations converges
as iterations proceed.
In the meantime, we have discussed how to select
the subspace dimension $p$ and the series degree $d$
in computations, and proposed robust and general-purpose
selection strategies for them. We have numerically tested our
CJ-FEAST SVDsolver on a number of problems in several
aspects and shown that it is robust, effective and efficient.
Numerical experiments have demonstrated that
the CJ-FEAST SVDsolver is at least competitive with IFEAST and is
much more efficient than IFEAST when the desired singular values
are extreme and interior ones, respectively, and they have
also illustrated that CJ-FEAST is more robust than IFEAST if
a higher accuracy is required.

The adaptation of the CJ-FEAST SVDsolver to the real symmetric and complex
Hermitian eigenvalue problem is straightforward, where the eigenpairs
with the eigenvalues in a given real interval are of interest.
We only need to replace the Rayleigh--Ritz projection for
the SVD problem by the counterpart for the eigenvalue problem. The
results and analysis are directly applicable or adaptable to the variant of SVDsolver,
i.e., the CJ-FEAST eigensolver,
and the practical selection strategies
proposed for $p$ and $d$ still work.
Moreover, the construction of an approximate spectral projector by
the Chebyshev--Jackson series involves
only matrix-matrix products and can thus be implemented
very efficiently in parallel computing environments.
In a word, the CJ-FEAST eigensolver is an
efficient and robust alternative of
the available contour integral-based FEAST eigensolvers for
real symmetric or complex Hermitian eigenvalue problems.

%\begin{acknowledgements}
%If you'd like to thank anyone, place your comments here
%and remove the percent signs.
%\end{acknowledgements}

% Authors must disclose all relationships or interests that
% could have direct or potential influence or impart bias on
% the work:
%
\section*{Declarations}

{\bf Conflict of interest} \ The two authors declare that they have no
financial interests, and they read and approved the final manuscript.
The algorithmic Matlab code is available upon reasonable request from
the corresponding author.
\bigskip

\noindent {\bf Data Availability} \ Enquires about data availability should be 
directed to the authors.

% BibTeX users please use one of
%\bibliographystyle{spbasic}      % basic style, author-year citations
%\bibliographystyle{spmpsci}      % mathematics and physical sciences
%\bibliographystyle{spphys}       % APS-like style for physics
%\bibliography{references}   % name your BibTeX data base

% Non-BibTeX users please use
%\begin{thebibliography}{}
%
% and use \bibitem to create references. Consult the Instructions
% for authors for reference list style.
%
%\bibitem{RefJ}
% Format for Journal Reference
%Author, Article title, Journal, Volume, page numbers (year)
% Format for books
%\bibitem{RefB}
%Author, Book title, page numbers. Publisher, place (year)
% etc
%\end{thebibliography}

\end{document}